\newcommand{\oM}{\overline{\mathcal M}}
\def\oM{{\overline{\mathcal{M}}}}
\newcommand{\ch}{\mathrm{ch}}
\newcommand{\DR}{\mathrm{DR}}
\newcommand{\SRT}{\mathrm{SRT}}
\newcommand{\DLSRT}{\mathrm{DLSRT}}
\newcommand{\ZZ}{\mathbb{Z}}
\newcommand{\QQ}{\mathbb{Q}}
\DeclareMathOperator{\Aut}{Aut}
\newcommand{\Ch}{\Omega}
\newtheorem{theorem}{Theorem}[section]
\newtheorem{proposition}[theorem]{Proposition}
\newtheorem{lemma}[theorem]{Lemma}
\newtheorem{corollary}[theorem]{Corollary}
\newtheorem{conjecture}[theorem]{Conjecture}
\theoremstyle{remark}
\newtheorem{remark}[theorem]{Remark}
\theoremstyle{definition}
\newtheorem{definition}[theorem]{Definition}
\numberwithin{equation}{section}
\begin{document}

\title[Stable tree expressions with Omega-classes and DR cycles]{Stable tree expressions with Omega-classes and double ramification cycles}

\author[X.~Blot]{Xavier Blot}
\address{X.~B.: Korteweg-de Vriesinstituut voor Wiskunde, Universiteit van Amsterdam, Postbus 94248, 1090GE Amsterdam, Nederland}
\email{x.j.c.v.blot@uva.nl}	

\author[D.~Lewa\'nski]{Danilo Lewa\'nski}
\address{D.~L.: Dipartimento di Matematica, Informatica e Geoscienze, Universit\`a degli studi di Trieste,
	Via Weiss 2, 34128	Trieste, Italia}
\email{danilo.lewanski@units.it} 

\author[P.~Rossi]{Paolo Rossi}
\address{P.~R.: Dipartimento di Matematica``Tullio Levi-Civita'', Universit\`a degli studi di Padova,
	Via Trieste 63, 35121 Padova, Italia}
\email{paolo.rossi@math.unipd.it}

\author[S.~Shadrin]{Sergei Shadrin}
\address{S.~S.: Korteweg-de Vriesinstituut voor Wiskunde, Universiteit van Amsterdam, Postbus 94248, 1090GE Amsterdam, Nederland}
\email{s.shadrin@uva.nl}	

\begin{abstract} We propose a new system of conjectural relations in the tautological ring of the moduli space of curves involving stable rooted trees with level structure decorated by Hodge and $\Omega$-classes and prove these conjectures in different cases.
\end{abstract}

\maketitle
\tableofcontents

\section{Introduction} The goal of this paper is to present a general conjecture and a number of proven special cases that express the so-called $A$-classes emerging in the theory of the DR/DZ correspondence for integrable hierarchies \cite{BDGR1,BGR19} in terms of stable trees decorated by products of the $\lambda$-classes and the so-called $\Omega$-classes, as well as the vanishing identities of the similar flavor. The results and conjectures that we present here are to a large extent parallel to the ones appearing in~\cite{BS22}. The main differences lie in the following two aspects:
\begin{description}
	\item[interpretation] The results and conjectures of~\cite{BS22} aim to proving the existence and equivalence statements in the theory of DR/DZ correspondence, while in our approach the goal is to explain the surprising numerical observations made by the first and the second named authors for the tau functions of the DR hierarchy.
	\item[decorations] The decoration of the stable trees involved in respective the tautological relations is also quite different: in~\cite{BS22} stable trees are always decorated by $\psi$-classes, while here we use products of $\lambda$-classes and $\Omega$-classes, as mentioned above.\\
\end{description}

The motivation for the present work comes from two different aspects in the study of the so-called double ramification (DR) hierarchies of PDEs associated to cohomological field theories, intorduced by Buryak in~\cite{Bur} and extended to the case of F-cohomological field theories in \cite{BR21,ABLR21}. On the one hand, there is a program of establishing a Miura equivalence between the DR hierarchies and Dubrovin-Zhang hierarchies for a large class of F-cohomological field theories (as well as polynomiality of conservation laws of the latter): these facts were reduced in~\cite{BGR19,BS22} to a system of tautological relations of a type that had never emerged in the literature before. In this direction, our paper can be considered as an enhancement of the system of tautological relations of this type, with new examples. We hope that, once a critical mass of conjectural and proven relations of this kind is known, this can lead to their full understanding and general proof.\\

On the other hand, the study of quantum tau functions, introduced in~\cite{BDGR1} in the context of the quantization of DR hierarchies, and performed in~\cite{Blot} and in a forthcoming paper~\cite{BlotLew}, strongly suggests a system of relations between double ramification cycles and $\Omega$-classes, with no immediate resemblance with the known expression for the double ramification cycles proved in~\cite{JPPZ}. Furthermore, restricting the conjecture of~\cite{BlotLew} to the classical setting suggests a systems of relations between the A-classes of \cite{BGR19,BS22} and $\Omega$-classes. In this sense, the paper presents a system of relations revealing what is behind the numerical observations of~\cite{BlotLew} in the classical setting.\\

As a final remark let us mention that the $\Omega$-classes that we use here recently became ubiquitous in the interaction of enumerative geometry and integrability. They are used in a variety of works on Hurwitz numbers, Mirror Symmetry of toric Calaby-Yau threefolds, Masur-Veech volumes, $\Theta$-classes and their relation to integrable hierarchies, Euler characteristic of the open moduli spaces of curves, spin Gromov-Witten theory of curves and the study of spaces of meromorphic differentials on algebraic curves, in the expressions of the double ramification cycles and their different variations, among other topics. See for instance \cite{rELSV, JPPZ, BKL, spin, Euler, Double} and references therein.\\

\subsection{Organization of the paper} The paper is not entirely self-contained, as we borrow several constructions and ideas from a number of previous works on similar structures, see~\cite{BGR19,BHS22} and especially~\cite{AS09} and~\cite{BS22}; the material presented in these last two papers is necessary to follow our arguments. We assume the reader to be familiar with the standard notation for various classes in the tautological ring of the moduli spaces of curves and the usage of stable graphs for their expressions.\\

We present our results and conjectures in Section~\ref{sec:results} assuming the reader is familiar not only with the standard stratification of the moduli spaces of curves and $\psi$-, $\kappa$-, and $\lambda$-classes in the tautological ring, but also with less known $\Omega$-classes. The introduction to the $\Omega$-classes and their properties is postponed to Section~\ref{sec:Omega:classes}, where we recall their basic properties and use them to analyze the formulas proposed in Section~\ref{sec:results}. Finally, in Section~\ref{sec:Arcara:Sato} we employ localization techniques along the lines of the work of Arcara-Sato~\cite{AS09} in order to prove the main results of this paper. \\

\subsection{Conventions and notation} We work with classes in the tautological ring of the moduli space of stable curves, $R^*(\oM_{g,n})$; when we say ``cohomological degree'' we abuse the terminology and actually refer to the degree in the Chow ring, so half the actual degree in cohomology.  For a tautological class $c$ of mixed degree we denote by $(c)_i$ its homogenenous component of degree $i$ in Chow and $2i$ in cohomology. 

\subsection{Acknowledgments} This project started at a conference on moduli spaces in Leysin in March 2022 supported by the SNSF Ambizione grant $PZ00P2/202123$. We express our gratitude to the developers of the \texttt{admcycles} package~\cite{admcycles}, as the experiments leading to the conjectures presented in this paper were conducted with this package.

X.~B. was partially supported by the Netherlands Organization for Scientific Research, by the FCT project UIDB/00208/2020 and by the ISF grant 335/19 in the group of Ran Tessler.
D.~L. has been supported by the SNSF Ambizione grant $PZ00P2/202123$ hosted at Section de Math\'{e}matiques de l'Universit\'{e} de Gen\`{e}ve, by the University of Trieste, by the INFN under the national project MMNLP and by the INdAM group GNSAGA.
P.~R. is supported by the University of Padova, by the INFN under the national project MMNLP and by the INdAM group GNSAGA.
S.~S. is supported by the Netherlands Organization for Scientific Research.

\section{Conjectures and theorems} \label{sec:results}

\subsection{Basic notation for trees}
Let $\SRT_{g,n,m}$ be the set of stable rooted trees of total genus $g$, with $n$ regular legs $\sigma_1,\dots,\sigma_n$ and $m$ extra legs $\sigma_{n+1},\dots,\sigma_{n+m}$, which we refer to as ``frozen'' legs and must always be attached to the root vertex. For a $T\in \SRT_{g,n,m}$ we use the following notation:
\begin{itemize}
	\item $H(T)$ is the set of half-edges of $T$.
	\item $L(T),L_r(T),L_f(T)\subset H(T)$ are the sets of all, regular, and frozen legs of $T$, respectively. $L(T) = L_r(T)\sqcup L_f(T)$.
	\item $H_e(T)\coloneqq H(T)\setminus L(T)$.
	\item $\iota\colon H_e(T)\to H_e(T)$ is the involution that interchanges the half-edges that form an edge.
	\item $E(T)$ is the set of edges of $T$, $E\cong H_e(T)/\iota$.
	\item $H_+(T)\subset H(T)$ is the set of the so-called ``positive'' half-edges that consists of all regular legs of $T$ and of half-edges in $H(T)\setminus L(T)$ directed away from the root at the vertices where they are attached,
	$H_+(T)\cong E(T)\cup L_{r}(T)$; 
	\item $H_-(T)\subset H(T)$ is the set of the so-called ``negative'' half-edges that consists of all frozen legs of $T$ and of half-edges in $H(T)\setminus L(T)$ directed towards the root at the vertices where they are attached, $H_-(T)\cong E(T)\cup L_{f}(T)$;
	\item $V(T),V_{nr}(T)$ are the sets of vertices and non-root vertices of $T$. 
	\item $v_r\in V(T)$ is the root vertex of $T$; $V(T)=\{v_r(T)\}\sqcup V_{nr}(T)$.
	\item For a $v\in V(T)$, $H(v),H_+(v),H_-(v)$ are all, positive, and negative half-edges attached to $v$, respectively. Obviously, $|H_-(v_r)|=m$ and for any $v\in V_{nr}(T)$ we have $|H_-(v)|=1$.
	\item For a $v\in V(T)$ let $g(v)\in \ZZ_{\geq 0}$ be the genus assigned to $v$. The stability condition means that 
	\[\chi(v)\coloneqq 2g(v)-2+|H(v)|>0.\] 
	The genus condition reads
	\[
	\sum_{v\in V(T)} g(v) = g.
	\]
	\item We say that a vertex or a (half-)edge $x$ is a descendant of a vertex or a (half-)edge $y$ if $y$ is on the unique path connecting $x$ to $v_r$. 
	\item For an $h\in H_+(T)$ let $DL(h)$ be the set of all legs that are descendants to $h$, including $h$ itself. Note that $DL(h)\subseteq L_r(T)$ for any $h\in H_+(T)$ and $DL(l)=\{l\}$ for $l\in L_r(T)$. 
	\item For an $h\in H_+(T)$ let $DH(h)$ be the set of all positive half-edges that are descendants to $h$, \emph{excluding} $h$. For instance, for $l\in L_r(T)$ we have $DH(l) = \emptyset$, and for $h\in H_+(T)\setminus L_r(T)$ we have $DH(h) \supseteq DL(h)$. 
	\item For an $e\in E(T)$ let $DL(e)$ be the set of all legs that are descendants to $e$. Note that $DL(e)\subseteq L_r(T)$ for any $e\in E(T)$. 
	\item For an $v\in E(T)$ let $DL(v)$ be the set of all regular legs that are descendants to $v$. In particular, $DL(v_r) = L_r(T)$. 
	\item For a $v\in V(T)$ let $DV(v)\subset V(T)$ be the subset of all vertices that are descendants of $v$, including $v$ itself. For instance, $DV(v_r) = V(T)$. Let 
	\[
	D\chi(v)\coloneqq \sum_{v'\in DV(v)} \chi(v').
	\]
\end{itemize}

In the pictures it is convenient to arrange the half-edges at each vertex such that the negative half-edges are directed to the left and the positive half-edges are directed to the right. In particular, the root vertex is the leftmost vertex on the pictures. Here is an example of a stable rooted tree in $\SRT_{1,3,2}$ placed on the plane following this convention:
\begin{gather*} 
	\vcenter{\xymatrix@C=15pt@R=5pt{
			& & & & & & \\
			& & & *+[o][F-]{{0}} \ar@{-}[ru]*{{}_{\,\,\sigma_1}} \ar@{-}[rrd] & & & \\ 
			& *+[o][F-]{{0}}\ar@{-}[rru]\ar@{-}[rd]*{{}_{\,\sigma_3}}\ar@{-}[ul]*{{}_{\sigma_4\,\,\,}}\ar@{-}[dl]*{{}_{\sigma_5\,\,\,}} & &  & & *+[o][F-]{{1}} \ar@{-}[rd]*{{}_{\,\,\sigma_2}} & \\
			& & & & & &}}\,.
\end{gather*}

Consider the polynomial ring $Q\coloneqq \QQ[a_1,\dots,a_n]$ and define $a\colon H_+(T) \to Q$, $a\colon E(T)\to Q$, and $a\colon V(T)\to Q$ (abusing notation we use the same symbol $a$ for all these maps) by 
\begin{align*}
	a(\sigma_i)& \coloneqq a_i, &i=1,\dots,n; 
	& & a(h)& \coloneqq \textstyle\sum_{l\in DL(h)} a(l), & h\in H_+(T); \\
	a(e)& \coloneqq \textstyle\sum_{l\in DL(e)} a(l), & e\in E(T); 
	& & a(v)& \coloneqq \textstyle\sum_{l\in DL(v)} a(l), & v\in V(T).
\end{align*} 

\subsection{Vanishing conjecture for more than one frozen leg} \label{sec:vanishing-geq-2}

Let $T\in \SRT_{g,n,m}$. Assign to each $v\in V(T)$ the moduli space of curves $\oM_{g(v),|H(v)|}$, where the first $|H_+(v)|$ marked points correspond to the positive half-edges attached to $v$ and ordered in an arbitrary but fixed way and the the last $|H_-(v)|$ marked points correspond to the negative half-edges attached to $v$, also ordered in some arbitrary but fixed way. Consider the class 
\begin{align*}
& \Omega(v)  \coloneqq a(v)^{1-g(v)}\lambda_{g(v)} \Omega^{[a(v)]}_{g(v),|H(v)|}\Big(a(v),0; -a(h_1),\dots,-a(h_{|H_+(v)|}),\underbrace{0,\ldots,0}_{|H_-(v)|}\Big)
\\ 
& 
\in R^*(\oM_{g(v),|H(v)|})\otimes_{\QQ}Q. 
\end{align*}
The classes $\Omega^{[x]}_{g,n}(r,s;\mu_1,\dots,\mu_n)$ are defined in Section~\ref{sec:Omega:classes}; for now it might be seen just a notation for some class of mixed degree. The only property that we use in this section to state our conjectures and results is the following feature of the class $\Omega(v)$: its component in $R^d$ is a homogeneous polynomial of degree $d$ in $a_1,\dots,a_n$, see Proposition~\ref{prop:homogeneityclassicallimit} in Section~\ref{sec:Omega:classes}.

\begin{definition}
	For each $(g,n.m)$ such that $2g-2+n+m>0$ define the class 
	$$\Omega^m_{g,n}\in R^*(\oM_{g,n+m})\otimes_{\QQ}Q$$
	as
	\begin{equation}
		\Omega^m_{g,n} \coloneqq \sum_{T\in\SRT(g,n,m)} (-1)^{|E(T)|} \biggl(\prod_{e\in E(T)} a(e)\biggr) (b_T)_* \bigotimes_{v \in V(T)} \Omega(v)
	\end{equation}
	Here $(b_T)_*$ is the boundary pushforward map that acts from $\bigotimes_{v \in V(T)} R^*(\oM_{g(v),|H(v)|})\otimes_{\QQ} Q$ to $R^*(\oM_{g,n+m})\otimes_{\QQ}Q$. We call $\Omega(T)$ the summand corresponding to the tree $T$. The class $\Omega^m_{g,n}$ and each $\Omega(T)$ also have the feature that their component in $R^d$ are homogeneous polynomials of degree $d$ in $a_1,\dots,a_n$. 
	

\end{definition}

\begin{conjecture} \label{conj:vanishing-m-less-2} For $g\geq 0$, $n\geq 0$, $m\geq 2$ we have $\deg \Omega^m_{g,n} \leq 2g-2+m$. 
\end{conjecture}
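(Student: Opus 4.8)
The plan is to first invoke the homogeneity of Proposition~\ref{prop:homogeneityclassicallimit} to turn the degree inequality into a cancellation statement for the top coefficients in the variables $a_1,\dots,a_n$. Since the component of $\Omega^m_{g,n}$ in $R^d$ is homogeneous of degree $d$ in the $a_i$, the bound $\deg\Omega^m_{g,n}\le 2g-2+m$ is equivalent to asserting that every monomial of $a$-degree strictly larger than $2g-2+m$ has vanishing coefficient in the signed sum over $\SRT(g,n,m)$. Before attacking this I would record the bookkeeping identity that, for \emph{every} tree $T\in\SRT_{g,n,m}$,
\begin{equation*}
|E(T)|+\sum_{v\in V(T)}\bigl(2g(v)-2+|H_-(v)|\bigr)=2g-2+m,
\end{equation*}
which follows from $|V(T)|=|E(T)|+1$, from $\sum_v g(v)=g$, and from the bijection $H_-(T)\cong E(T)\sqcup L_f(T)$. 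This isolates the ``expected'' degree $2g(v)-2+|H_-(v)|$ carried by each vertex factor $\Omega(v)$, the codimension $|E(T)|$ being supplied by $(b_T)_*$.

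The first thing to settle is whether the bound could hold tree by tree. It cannot: a non-root vertex of genus $0$ has expected degree $2\cdot 0-2+1=-1<0$, whereas $\Omega(v)$ is a nonzero class of nonnegative degree. Thus the low-genus vertices systematically overshoot the target, and the inequality can only survive \emph{after} the signed summation $\sum_T(-1)^{|E(T)|}$. Identifying and controlling this cancellation is the heart of the matter; it is precisely the feature distinguishing the present statement from the crude estimate coming from the $\prod_v\lambda_{g(v)}$ factor alone, which merely forces the supporting strata to be of compact type (as $\lambda_{g}$ is killed by $\delta_{\mathrm{irr}}$) and is far from enough to reach $2g-2+m$.

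My approach to the cancellation would be the localization method of Arcara--Sato~\cite{AS09} developed in Section~\ref{sec:Arcara:Sato}. Because the $\Omega(v)$ behave multiplicatively under boundary restriction (Section~\ref{sec:Omega:classes}), the signed propagator sum $\sum_T(-1)^{|E(T)|}\bigl(\prod_{e}a(e)\bigr)(b_T)_*\bigotimes_v\Omega(v)$ has exactly the shape of a $\mathbb{C}^\ast$-fixed-point expansion. Concretely, I would build an auxiliary space $\mathcal{X}$ with a torus action whose fixed loci are indexed by the trees $T\in\SRT_{g,n,m}$ equipped with a level (height) function, each edge contributing the node-smoothing factor that resums to the propagator $a(e)$ and the sign $(-1)^{|E(T)|}$, and each vertex contributing $\Omega(v)$. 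One then reads off $\Omega^m_{g,n}$ as a residue of a single \emph{global} equivariant class $\mathcal{C}$ on $\mathcal{X}$, whereupon the desired inequality becomes an honest dimension count for $\mathcal{C}$: the $\prod_v\lambda_{g(v)}$ removes the $\delta_{\mathrm{irr}}$ directions, while the regular-leg insertions $-a(h_i)$ in the $\Omega$-classes are expected to cut essentially the remaining $n-1$ dimensions, leaving $2g-2+m$.

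The main obstacle is exactly the contribution of the genus-zero and otherwise ``small'' vertices flagged above: these are the terms that individually violate the bound, and matching them to the equivariant geometry of $\mathcal{X}$ — so that their overshoot cancels in the fixed-point localization — requires the full multiplicative and degree structure of the $\Omega$-classes together with the hypothesis $m\ge 2$. It is here that the general case resists, which is why one can only carry the argument through in the special configurations treated below, where the combinatorics of the level structure and of the $\Omega$-parameters remains tractable.
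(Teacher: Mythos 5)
You have not produced a proof, and it is important to be clear about why: the statement you were given is a \emph{conjecture} in the paper (Conjecture~\ref{conj:vanishing-m-less-2}), supported only by \texttt{admcycles} experiments for $(g,n,m)=(1,2,2)$ and by the special cases $g=0$ (Theorem~\ref{thm:genus0-mgeq2}) and $n=0,1$ (Theorem~\ref{thm:anygn1n0-mgeq2}); the paper itself offers no general proof, and neither do you. Your preliminary reductions are sound: by Proposition~\ref{prop:homogeneityclassicallimit} the degree bound is indeed equivalent to the vanishing of all coefficients of $a$-degree larger than $2g-2+m$; your bookkeeping identity $|E(T)|+\sum_{v}(2g(v)-2+|H_-(v)|)=2g-2+m$ is correct (it follows from $|V(T)|=|E(T)|+1$ and $|H_-(T)|\cong E(T)\sqcup L_f(T)$); and the observation that genus-zero non-root vertices make the bound fail tree-by-tree, so that cancellation across $\SRT_{g,n,m}$ is essential, is accurate. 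But the core of your argument --- the auxiliary space $\mathcal{X}$ with a torus action whose fixed loci are indexed by trees with level functions, from which $\Omega^m_{g,n}$ would be read off as a residue of a global equivariant class --- is never constructed; it is a wish, and you concede as much when you write that ``the general case resists.'' Two specific steps are unsupported: the claim that ``the $\Omega(v)$ behave multiplicatively under boundary restriction'' is established nowhere in Section~\ref{sec:Omega:classes} (Corollary~\ref{cor:JPPZExp} gives a graph-sum expression, not a restriction formula), and the assertion that the insertions $-a(h_i)$ ``cut essentially the remaining $n-1$ dimensions, leaving $2g-2+m$'' is exactly the content of the conjecture, so invoking it is circular.

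Your sketch also misrepresents how the paper actually handles the cases it \emph{can} prove, which is worth knowing if you want to push further. For $g=0$ no localization is used at all: Lemma~\ref{lem:g0vanishing} (a Riemann--Roch rank computation on the space of $r$-spin structures) shows $\Omega(v)=\Psi(v)$ for genus-zero vertices, hence $\Omega^m_{0,n}=\Psi^m_{0,n}$, and the bound becomes the purely combinatorial statement of \cite{BS22}. For $n=0,1$, the paper first collapses the vertex decorations to $\lambda$-classes via Propositions~\ref{prop:reducltion-to-lambda} and~\ref{prop:reducltion-to-lambdan0}, and only then applies localization in Section~\ref{sec:Arcara:Sato} --- on moduli of stable (relative) maps to $\mathbb{P}^1$ in the style of \cite{AS09}, \emph{not} on an auxiliary space of the kind you envision. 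Crucially, the fixed loci there are indexed by splittings $g=g_1+g_2$ into at most two vertex groups, producing the two-term relations~\eqref{eq:AS-Omega}; the resummation over all of $\SRT_{g,n,m}$ is a separate, purely combinatorial step borrowed from \cite{BS22} via Theorem~\ref{thm:equivalence-conjectures-m-geq-2}. So the division of labor in the paper is ``localization gives one-edge relations, combinatorics assembles the trees,'' whereas your plan asks localization to produce the entire tree sum in one stroke --- a construction for which no candidate space is known, and which is precisely where your proposal, like the conjecture itself, remains open.
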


Here the degree $\deg$ can be understood either as the degree in the tautological ring or the degree in the variables $a_1,\dots,a_n$, these two ways to define the degree are equivalent by construction.

This conjecture is supported by the experiments with the \texttt{admcycles} package~\cite{admcycles} for $(g,n,m)=(1,2,2)$ as well as the following two theorems:

\begin{theorem}\label{thm:genus0-mgeq2} For $g=0$, $n\geq 0$, $m\geq 2$ we have $\deg \Omega^m_{0,n} \leq m-2$. 
\end{theorem}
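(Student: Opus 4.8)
The plan is to measure everything by the degree in the variables $a_1,\dots,a_n$, which by construction coincides with the tautological degree, and to argue by induction on $n$. Since $\dim\oM_{0,n+m}=n+m-3$, the cases $n\le 1$ are automatic: there the whole space already has dimension $\le m-2$. So I assume $n\ge 2$, and the content becomes the vanishing of the homogeneous components $(\Omega^m_{0,n})_d$ for every $d$ in the range $m-1\le d\le n+m-3$. I would treat the top degree $d=n+m-3$ separately from the intermediate range $m-1\le d\le n+m-4$.

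For the top degree $d=n+m-3=\dim\oM_{0,n+m}$ only the deepest strata survive: in each summand $\Omega(T)$ every vertex is forced into its own top degree, so after the pushforward $(b_T)_*$ one lands in the one-dimensional space $R^{n+m-3}(\oM_{0,n+m})=\QQ\,[\pt]$. Hence
\[
(\Omega^m_{0,n})_{n+m-3}=\Bigl(\sum_{T\in\SRT_{0,n,m}}(-1)^{|E(T)|}\prod_{e\in E(T)}a(e)\prod_{v\in V(T)}I(v)\Bigr)[\pt],\qquad I(v):=\int_{\oM_{0,|H(v)|}}\Omega(v),
\]
and the theorem reduces, in this degree, to the vanishing of the bracketed numerical sum for $n\ge 2$. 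I would evaluate each genus-$0$ integral $I(v)$ from the explicit description of the $\Omega$-classes in Section~\ref{sec:Omega:classes}, using the homogeneity of Proposition~\ref{prop:homogeneityclassicallimit} to identify it with a single homogeneous polynomial of degree $|H(v)|-3$ in the $a$'s, and then prove that the signed weighted sum over trees vanishes. The key observation is that the weight $(-1)^{|E(T)|}\prod_e a(e)$ is exactly the propagator realizing this tree sum as an inverse-type operation, so that a recursion peeling off the subtrees attached at the positive half-edges of the root makes the sum telescope and cancel once $n\ge 2$; equivalently one may integrate out a single regular leg and run a string/dilaton recursion on $\int_{\oM_{0,n+m}}\Omega^m_{0,n}$. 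This is the genus-$0$ shadow of the fixed-point cancellations produced by the Arcara--Sato localization of Section~\ref{sec:Arcara:Sato}. I expect this combinatorial identity to be the main obstacle, both because it needs the closed form of $I(v)$ and because the bookkeeping of the arguments $-a(h)$ and of the sets $DL,DH$ under the tree recursion is delicate.

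For the intermediate degrees I would push the induction through the boundary. Since $\oM_{0,n+m}$ is smooth and projective with $R^*=H^*$ generated in degree one, a class $\beta$ of degree $d$ with $1\le d\le\dim-1$ vanishes as soon as its restriction to every boundary divisor does: if $\beta\cdot\delta=0$ for all boundary divisors $\delta$ then $\beta\cdot R^{\ge 1}=0$, and Poincar\'e duality forces $\beta=0$ for $d<\dim$. It therefore suffices to restrict $\Omega^m_{0,n}$ to each divisor $\oM_{0,A}\times\oM_{0,B}\hookrightarrow\oM_{0,n+m}$ and bound the $a$-degree of the restriction by $m-2$. I would first establish the splitting formula expressing $\Omega^m_{0,n}|_{\delta}$ as a sum of external products of smaller tree classes of the same shape, which is where the restriction behaviour of the vertex classes $\Omega(v)$ from Section~\ref{sec:Omega:classes} enters, and then apply the inductive hypothesis on the two factors. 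The accounting works in my favour: the original $m$ frozen legs are distributed between the two sides and the new node contributes at most one frozen leg to each component, so the two factors carry frozen numbers $m_A,m_B$ with $m_A+m_B\le m+2$, whence the inductive bounds $d_A\le m_A-2$, $d_B\le m_B-2$ give $d=d_A+d_B\le m-2$ and kill every component with $d\ge m-1$. The most technical ingredient of this step is the precise splitting formula — in particular how the frozen legs and the edge weights $a(e)$ distribute across $\delta$, and the need to invoke the companion degree bounds for the factors with only one or no frozen leg, for which the present statement does not apply directly.
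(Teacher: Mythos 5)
Your argument has two load-bearing steps, and both are deferred rather than carried out, so as it stands this is a strategy outline rather than a proof. First, the top-degree step: you reduce $(\Omega^m_{0,n})_{n+m-3}$ to the vanishing of a signed, $a$-weighted sum over trees of products of integrals $I(v)$, and then assert that this sum ``telescopes''. That identity \emph{is} the hard combinatorial content of the theorem in this degree --- it is precisely (a piece of) the genus-$0$ vanishing of \cite{BS22}, which is proved there by a nontrivial induction, not a telescope --- and nothing in your proposal establishes it; you yourself flag it as the main obstacle. Second, the boundary induction for the intermediate degrees has two holes which you also flag but do not close: (a) the restriction of $\Omega^m_{0,n}$ to a divisor $\delta_{A|B}$ is not simply a sum of external products of ``smaller tree classes of the same shape'' --- whenever $\delta_{A|B}$ occurs among the divisors of a stratum $(b_T)_*$, the excess intersection contributes a factor $-\psi'-\psi''$ at the node, so the resulting factors carry $\psi$-insertions with no matching $a$-weight and are not of the form $\Omega^{m'}_{0,n'}$, hence your inductive hypothesis says nothing about them; and (b) the divisors where all $m$ frozen legs lie on one side force the other factor to have exactly one frozen leg (the node), and for one frozen leg the correct genus-$0$ statement is not a vanishing bound but Theorem~\ref{thm:genus0-m1}, i.e.\ ${}^{lvl}\Omega^1_{0,n'}=A^1_{0,n'}$, an equality with a class that is certainly nonzero (it contains the fundamental class in degree $0$ and has components in all degrees up to the dimension). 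Your arithmetic $d_A+d_B\le m-2$, which needs $d_A\le m_A-1$ with $m_A=0$, collapses exactly in this case.

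You also never use the one fact that makes genus $0$ special, and which is the entire point of the paper's proof: in genus $0$ the vertex $\Omega$-classes are trivial. Lemma~\ref{lem:g0vanishing} shows, by a Riemann--Roch rank computation ($h^0=h^1=0$ for the universal root with positive primary fields), that $\bigl(a\,\Omega^{[a]}_{0,n}(a,0;a-a_1,\dots,a-a_n)\bigr)_k=0$ for $k>0$; combined with Proposition~\ref{prop:pull-back} this gives $\Omega(v)=\Psi(v)$ for every genus-$0$ vertex, hence $\Omega^m_{0,n}=\Psi^m_{0,n}$, and the theorem becomes, via the identification of Theorem~\ref{thm:relation-to-BS}, the vanishing already proved in \cite[Theorem 2.3]{BS22}. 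If you want a self-contained argument along your lines, the honest task is exactly the tree-sum identity you postponed; after the reduction $\Omega(v)=\Psi(v)$ it is a pure $\psi$-class statement, so you would in effect be re-proving the combinatorial theorem of \cite{BS22} rather than bypassing it.
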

In other words, Conjecture~\ref{conj:vanishing-m-less-2} holds in genus $0$. 

\begin{theorem}\label{thm:anygn1n0-mgeq2} For $g\geq 0$, $n=0,1$, $m\geq 2$ we have $\deg \Omega^m_{g,n} \leq 2g-2+m$. 
\end{theorem}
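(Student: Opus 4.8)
The plan is to prove the degree bound $\deg\Omega^m_{g,n}\le 2g-2+m$ for $n\in\{0,1\}$ by reducing the estimate to a vertex-by-vertex degree count along each stable rooted tree, and then showing that the contributions of the edges and the genus exactly absorb the excess. First I would fix a tree $T\in\SRT_{g,n,m}$ and examine its summand $\Omega(T)=(-1)^{|E(T)|}\bigl(\prod_{e\in E(T)}a(e)\bigr)(b_T)_*\bigotimes_{v}\Omega(v)$. Since the boundary pushforward $(b_T)_*$ raises cohomological degree by exactly $|E(T)|$, and each explicit factor $a(e)$ raises the polynomial degree (equivalently the tautological degree, by the homogeneity built into the construction) by one, the total degree of $\Omega(T)$ is
\[
\deg\Omega(T)=\sum_{v\in V(T)}\deg\Omega(v)+2|E(T)|.
\]
The crucial input is the homogeneity feature recorded in Proposition~\ref{prop:homogeneityclassicallimit}: the component of $\Omega(v)$ in $R^d$ is homogeneous of degree $d$ in the $a_i$, so I must bound the top nonzero degree of each local factor $\Omega(v)=a(v)^{1-g(v)}\lambda_{g(v)}\Omega^{[a(v)]}_{g(v),|H(v)|}(\dots)$.

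The key step is therefore a local degree estimate for $\Omega(v)$ on $\oM_{g(v),|H(v)|}$. I would extract from Section~\ref{sec:Omega:classes} the top-degree behaviour of the $\Omega$-class together with the $\lambda_{g(v)}$ prefactor; the presence of $\lambda_{g(v)}$ is decisive, since by Mumford's relations (and the vanishing of $\lambda_g\lambda_{g-1}$-type products on the boundary) it forces the local class to be supported in a restricted degree range, and the $a(v)^{1-g(v)}$ normalization shifts the polynomial degree accordingly. The expected bound is of the shape $\deg\Omega(v)\le g(v)-1+|H_+(v)|$ or a close variant, i.e.\ the local excess over the ``expected dimension'' is controlled by the number of outgoing half-edges at $v$. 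Summing over all vertices and using the tree combinatorics $\sum_v|H_+(v)|=|E(T)|+n$ (each edge contributes one positive half-edge and each of the $n$ regular legs one), together with $\sum_v g(v)=g$ and $|V(T)|=|E(T)|+1$, I would telescope the per-vertex bounds into a global inequality in which the edge count $|E(T)|$ cancels against the $2|E(T)|$ coming from pushforward and the $a(e)$ factors only if the local bound is sharp enough; this is where the hypothesis $n\le 1$ enters, keeping the leg contribution small enough that the final count closes at $2g-2+m$.

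The main obstacle will be establishing the sharp local degree bound for $\Omega(v)$ and verifying that the sum over vertices does not overshoot: naively adding the edge degree $2|E(T)|$ to the vertex degrees gives a bound that is too weak, so the proof must exploit a cancellation or a stronger vanishing. I expect the genuine difficulty to be that for $n=1$ the single regular leg $\sigma_1$ carries a variable $a_1$ that propagates up the tree through all the ancestral $a(e)$ and $a(v)$ values, so the homogeneity degrees do not decouple cleanly; the argument must track how $a_1$ enters each $\Omega(v)$ and show that the $\lambda_{g(v)}$-enforced vanishing still caps the total. A natural way to handle this is to split the sum over $\SRT_{g,n,m}$ according to the distance of the support from the root, or to argue by the same localization/Arcara--Sato techniques invoked in Section~\ref{sec:Arcara:Sato} for the harder statements, but for $n\le 1$ I anticipate the purely dimensional counting above, once the local bound is pinned down, suffices without the full localization machinery.
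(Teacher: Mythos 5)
Your central strategy---a per-vertex degree estimate summed over each tree---cannot prove this theorem, because the statement is not a term-by-term bound: individual summands $\Omega(T)$ genuinely exceed $2g-2+m$, and the inequality holds only after cancellations \emph{between different trees}. Concretely, take $g=1$, $n=1$, $m=2$, where the claimed bound is $2$. The one-vertex tree contributes $\Omega(v_r)=\lambda_1\Omega^{[a_1]}_{1,3}(a_1,0;-a_1,0,0)$, which by Proposition~\ref{prop:reducltion-to-lambda} equals $a_1\lambda_1/(1-a_1\psi_1)$ on $\oM_{1,3}$ and therefore contains the degree-$3$ term $a_1^3\lambda_1\psi_1^2$; this term is nonzero since $\int_{\oM_{1,3}}\lambda_1\psi_1^2=1/24$. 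So no local bound, however sharp, can close the argument by summation. You do acknowledge that ``the proof must exploit a cancellation or a stronger vanishing,'' but you supply no mechanism for it, and your expectation that dimensional counting suffices ``without the full localization machinery'' is exactly backwards: the paper's proof of the $n=1$ case \emph{is} the localization machinery. The Arcara--Sato-type relations~(\ref{eq: AS m=2}), (\ref{eq: AS any m}) and the DR relation~(\ref{eq: Arcara with DR}) of Section~\ref{sec:Arcara:Sato}, proved by virtual localization on $\overline{\mathcal{M}}_{g,1+m}(\mathbb{P}^1,1)$ and on relative stable maps, are multiplied by $\lambda_g$ and rewritten via Propositions~\ref{prop:reducltion-to-lambda} and~\ref{prop:reducltion-to-lambdan0} as the relations~(\ref{eq:AS-Omega}) among the vertex decorations $\lambda_{\tilde g}\Omega_{\tilde g,1+\tilde m}(1,0;-1,0,\dots,0)$; these relations are then fed into the purely combinatorial argument of~\cite{BS22} (the same machinery behind Theorem~\ref{thm:equivalence-conjectures-m-geq-2}), which converts them into the asserted degree bound. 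Those relations are precisely the inter-tree cancellations your sketch is missing.

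Two further points. First, your bookkeeping $\deg\Omega(T)=\sum_v\deg\Omega(v)+2|E(T)|$ double counts: the cohomological and polynomial degrees of $\Omega(T)$ agree by construction, and in this unified degree $(b_T)_*$ contributes $|E(T)|$ (cohomologically) while $\prod_{e}a(e)$ contributes the \emph{matching} $|E(T)|$ (polynomially), so $\deg\Omega(T)=\sum_v\deg\Omega(v)+|E(T)|$. Second, the $n=0$ case is the one place where a direct argument does work, but for a reason your scheme never uses: when $n=0$ every edge has $a(e)=0$ (an empty sum over descendant regular legs), so every tree with at least one edge vanishes identically, and the surviving single-vertex class equals $a^{g}\lambda_g\Lambda_g^{[a]}$ by Proposition~\ref{prop:reducltion-to-lambdan0}, whose degree is at most $2g-1\le 2g-2+m$ because $\lambda_g^2=0$ (Section~\ref{sec:n0-case}). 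In other words, the $n=0$ case is settled by the collapse of the tree sum to a single term, not by summing local bounds over vertices.
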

In other words, Conjecture~\ref{conj:vanishing-m-less-2} holds in any genus for $n=0$ and $n=1$. 

\subsection{An alternative reformulation with level structures} \label{sec:vanishing-geq-2-alt}

We enhance the structure of a stable rooted tree to what we call a degree-labeled stable rooted tree (of genus $g$, with $n$ regular and $m$ frozen legs). To this end we take a stable rooted tree $T\in \SRT_{g,n,m}$ and assign to each $v\in V(T)$ an extra degree label $p(v)\in \ZZ_{\geq 0}$ such that $p(v) \leq 3g(v)-3+|H(v)|$. Denote the set of all degree-labeled stable rooted trees by $\DLSRT_{g,n,m}$.
and consider its elements as pairs $(T,p)$, where $T\in\SRT_{g,n,m}$ and $p$ is the degree label function on $T$.

Our next goal is to assign to a degree-labeled stable rooted tree $(T,p)\in \DLSRT_{g,n,m}$ a coefficient that we call $C_{lvl}(T,p)$. A function $\ell\colon V(T)\to\ZZ_{\geq 0}$ is called an admissible level function if the following conditions are satisfied:
\begin{itemize}
	\item The value of $\ell$ on the root vertex is zero ($\ell(v_r) = 0$).
	\item If $v'\in DV(v)$ and $v'\not=v$, then $\ell(v')>\ell(v)$. 
	\item There are no empty levels, that is, for any $0\leq i \leq \max \ell(V(T))$ the set $\ell^{-1}(i)$ is non-empty. 
	\item For every $0\leq i\leq \max \ell(V(T))-1$ we have inequality 
	\[
	|\{v\in V(T) \,|\, \ell(v)\leq i \}|-1+\sum_{\substack{v\in V(T) \\ \ell(v)\leq i }} p(v) \leq m-2+\sum_{\substack{v\in V(T) \\ \ell(v)\leq i }} 2g(v).
	\]
	In other words, if we let $p(v)$ to be a degree of some class on the moduli space corresponding to the vertex $v$, then the inequality means some bound (\emph{N.B.:} exactly the same bound as in Conjecture~\ref{conj:vanishing-m-less-2}) on the degree of a class obtained by the boundary pushforward map $(b_{T'})_*$ of the tree $T'$ obtained from $T$ by cutting it at level $i$. For instance, if $T$ has more than one vertex, then $p(v_r) \leq 2g(v_r)-2+m$. 
\end{itemize}
Let $\mathcal{L}(T,p)$ denote the set of admissible level functions on $(T,p)\in \DLSRT_{g,n,m}$. The coefficient  $C_{lvl}(T,p)$ is defined as 
\[
C_{lvl}(T,p)\coloneqq \sum_{\ell \in \mathcal{L}(T,p) } (-1)^{\max \ell(V(T))}
\]

Now, we adjust the main steps of the definition of $\Omega(T)$ to the case of degree-labeled stable rooted trees. For a $(T,p)\in\DLSRT_{g,n,m}$, we still assign to each vertex $v\in V(T)$ the moduli space of curves $\oM_{g(v),|H(v)|}$, where the first $|H_+(v)|$ marked points correspond to the positive half-edges attached to $v$ and ordered in an arbitrary but fixed way and the the last $|H_-(v)|$ marked points correspond to the negative half-edges attached to $v$, also ordered in some arbitrary but fixed way. But now we consider the class
\[
(\Omega(v))_{p(v)}\coloneqq \bigg(a(v)^{1-g(v)}\lambda_{g(v)} \Omega^{[a(v)]}_{g(v),|H(v)|}\Big(-a(h_1),\dots,-a(h_{|H_+(v)|}),\underbrace{0,\ldots,0}_{|H_-(v)|}\Big)\bigg)_{p(v)} 
\]
that belongs to $R^{p(v)}(\oM_{g(v),|H(v)|})\otimes_{\QQ}Q$, that is, we consider the homogeneous component of $\Omega(v)$ of degree $p(v)$. It is a homogeneous polynomial of degree $p(v)$ in $a_1,\dots,a_n$, and it can be nonzero only if $p(v)\geq g(v)$ (in principle, one could add the latter inequality to the list of conditions for the degree labels $p$, but we don't do this for a better comparison with~\cite{BS22}, cf.~Remark~\ref{rem:inequ-degree-label} below). 

Assign to each $(T,p)\in \DLSRT_{g,n,m}$ the class $\Omega(T,p)$ in $R^{P(T,p)}(\oM_{g,n+m})\otimes_{\QQ}Q$, where $P(T,p)\coloneqq |E(T)| + \sum_{v\in V(T)} p(v)$, given by 
\[
\Omega(T,p) \coloneqq C_{lvl}(T,p) \biggl(\prod_{e\in E(T)} a(e)\biggr) (b_T)_* \bigotimes_{v \in V(T)} (\Omega(v))_{p(v)}.
\]
Recall that $(b_T)_*$ is the boundary pushforward map from $\bigotimes_{v \in V(T)} R^{p(v)}(\oM_{g(v),|H(v)|})\otimes_{\QQ} Q$ to $R^{P(T,p)}(\oM_{g,n+m})\otimes_{\QQ}Q$. The class $\Omega(T,p)$ is a homogeneous polynomial of degree $P(T,p)$ in $a_1,\dots,a_n$. 

Consider for each $(g,n,m)$ such that $2g-2+n+m>0$ the class ${}^{lvl}\Omega^m_{g,n}  \in R^*(\oM_{g,n+m})\otimes_{\QQ}Q$ defined as
$${}^{lvl}\Omega^m_{g,n} \coloneqq \sum_{(T,p)\in\DLSRT(g,n,m)} \Omega(T,p)$$. 

\begin{conjecture} \label{conj:vanishing-m-less-2-alt} For $g\geq 0$, $n\geq 0$, $m\geq 2$ we have $\deg {}^{lvl}\Omega^m_{g,n} \leq 2g-2+m$. 
\end{conjecture}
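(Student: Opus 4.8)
The goal is a \emph{reformulation}, so the natural plan is to prove that Conjecture~\ref{conj:vanishing-m-less-2-alt} is equivalent to Conjecture~\ref{conj:vanishing-m-less-2}; in fact I would prove the stronger statement that
\[
{}^{lvl}\Omega^m_{g,n} = \Omega^m_{g,n}
\]
as classes, under the assumption that Conjecture~\ref{conj:vanishing-m-less-2} holds for $(g,n,m)$ and for all triples with the same $m$ that are strictly smaller in the lexicographic order on $(g,n)$. Since the geometric class attached to a degree-labelled tree depends only on $(T,p)$ and not on any level function, the difference of the two classes is controlled purely by the difference of coefficients. The first ingredient is the combinatorial identity
\[
\sum_{\ell}(-1)^{\max\ell(V(T))} = (-1)^{|E(T)|},
\]
where the sum runs over \emph{all} level functions satisfying the first three (non-degree) conditions. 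Indeed such functions are exactly the chains of order ideals $\{v_r\}=S_0\subsetneq\cdots\subsetneq S_k=V(T)$ in the ancestor poset of $T$, and the signed count is the value at $-1$ of the strict order polynomial of the forest $V(T)\setminus\{v_r\}$; by reciprocity for order polynomials this equals $(-1)^{|V(T)|-1}$ times the number of order-preserving maps to a one-element chain, i.e. $(-1)^{|E(T)|}$. Writing the sign in the definition of $\Omega^m_{g,n}$ in this form and expanding $\bigotimes_v\Omega(v)=\sum_p\bigotimes_v(\Omega(v))_{p(v)}$, I get
\[
{}^{lvl}\Omega^m_{g,n}-\Omega^m_{g,n} = -\sum_{(T,p)}\ \biggl(\sum_{\ell\ \text{inadmissible}}(-1)^{\max\ell(V(T))}\biggr)\Bigl(\prod_{e\in E(T)}a(e)\Bigr)(b_T)_*\bigotimes_{v\in V(T)}(\Omega(v))_{p(v)}.
\]

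Next I would evaluate the inner signed sum by organizing inadmissible level functions according to their \emph{first violated level} $i^\ast$, with associated order ideal $B\coloneqq S_{i^\ast}$. The crucial bookkeeping is that the failing inequality at $i^\ast$ reads precisely $P(B,p|_B)>2g_B-2+m$ with $g_B=\sum_{v\in B}g(v)$: cutting $T$ along level $i^\ast$ yields an honest stable rooted tree $B$ that still carries all $m$ frozen legs at its root, the severed half-edges on the root side being positive and hence becoming ordinary regular legs, so that $B$ sits in a degree violating the bound of Conjecture~\ref{conj:vanishing-m-less-2}. Since $i^\ast$ is the \emph{first} bad level, the restriction of $\ell$ to $B$ is an admissible level function on $B$ while the restriction to the upper forest $F\coloneqq V(T)\setminus B$ is unconstrained; because the geometric class does not see $\ell$, the sign splits as $(-1)^{\max\ell}=(-1)^{\max \ell|_B}(-1)^{(\text{top depth})}$ and the two factors sum independently. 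Applying the same reciprocity identity to $F$ and recognising $\sum_{\ell|_B}(-1)^{\max \ell|_B}=C_{lvl}(B,p|_B)$, the contribution of each bad ideal collapses to $C_{lvl}(B,p|_B)$ times a top factor. After reindexing by the pair (bottom tree $B$, upper forest $F$) and using that $(b_T)_*$ factors through the gluing of $B$ and $F$ along the cut edges, for each fixed $F$ the bottom part assembles to $\sum_{B}\Omega(B,p|_B)$, the sum over \emph{all} stable rooted trees of the resulting type $(g_B,n_B,m)$ with total degree exceeding $2g_B-2+m$ — that is, exactly ${}^{lvl}\Omega^m_{g_B,n_B}$ restricted to degrees $>2g_B-2+m$.

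The induction then closes on the lexicographic order on $(g,n)$. Because the upper forest $F$ is non-empty, it either carries positive genus, forcing $g_B<g$, or it is purely of genus $0$, in which case each of its connected components is rooted at a cut edge and, by stability of genus-$0$ vertices, drags along at least two descendant regular legs; a short count then gives $n_B\le n-1$. In either case $(g_B,n_B)$ is strictly smaller than $(g,n)$, so by the inductive hypothesis ${}^{lvl}\Omega^m_{g_B,n_B}$ vanishes in degrees $>2g_B-2+m$, the bottom factor dies, and every group of the sum above is zero. This proves ${}^{lvl}\Omega^m_{g,n}=\Omega^m_{g,n}$, whence Conjecture~\ref{conj:vanishing-m-less-2-alt} is equivalent to Conjecture~\ref{conj:vanishing-m-less-2}. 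In particular, since Theorem~\ref{thm:genus0-mgeq2} establishes Conjecture~\ref{conj:vanishing-m-less-2} for all $(0,n,m)$ and the induction never leaves genus $0$ (there $g_B=0$ always), this argument proves Conjecture~\ref{conj:vanishing-m-less-2-alt} unconditionally in genus~$0$.

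The step I expect to be the real obstacle is the cutting bookkeeping in the second paragraph: one must check that severing $T$ at a level is fully compatible with the decorations — that the edge weight $\prod_e a(e)$ splits as the product of the weights of $B$, of $F$, and of the cut edges, that substituting the cut-edge values $a(e)=\sum_{l\in DL(e)}a(l)$ into the regular-leg variables of the bottom class preserves the polynomial-degree vanishing, and above all that, for a fixed upper forest, the bottom trees really range over \emph{all} stable rooted trees of the prescribed type $(g_B,n_B,m)$, so that their weighted sum reconstitutes ${}^{lvl}\Omega^m_{g_B,n_B}$ rather than a partial sum. Making the genus and frozen-leg counts, the sign factorization through the cut, and this reindexing simultaneously consistent is where the bulk of the technical work lies.
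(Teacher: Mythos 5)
Since the statement itself is a conjecture, what can actually be proved is its relationship to Conjecture~\ref{conj:vanishing-m-less-2}, and this is precisely what your proposal does; it is correct in substance, but it takes a genuinely different route from the paper. The paper obtains the equivalence (Theorem~\ref{thm:equivalence-conjectures-m-geq-2}) by transfer: Theorem~\ref{thm:relation-to-BS} identifies the $\psi$-analogues ${}^{lvl}\Psi^m_{g,n}$ with the generating functions of~\cite{BS22}, and then the proofs of Theorems~3.4 and~3.10 of \emph{op.~cit.} (a combinatorial ``triangular change'' between the leveled and non-leveled families, using nothing but the degrees in which the classes are allowed to be nonzero) are invoked verbatim with $\Psi$ replaced by $\Omega$. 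You instead reconstruct the combinatorics from scratch: the reciprocity identity $\sum_{\ell}(-1)^{\max\ell}=(-1)^{|E(T)|}$ (correct: it is the evaluation $\Omega^{\circ}_{P}(-1)=(-1)^{|P|}\Omega_{P}(1)$ for the poset $P=V(T)\setminus\{v_r\}$, and again for the upper forest), the stratification of inadmissible level functions by the first violated level, the sign factorization through the cut, and a lexicographic induction on $(g,n)$, where your count showing that a genus-zero upper forest carries at least two regular legs per component guarantees $(g_B,n_B)<(g,n)$. This buys a self-contained argument, a stronger conditional statement (the equality of classes ${}^{lvl}\Omega^m_{g,n}=\Omega^m_{g,n}$, not merely matching vanishing ranges), and the unconditional genus-zero case, at the price of the bookkeeping you flag; the paper's citation-based route is shorter and keeps all tree combinatorics in one place (\cite{BS22}), which is exactly why it introduced the $\Psi$-classes in Section~\ref{sec:Relation-toBS}.

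One point in your flagged ``technical work'' deserves to be made explicit, because as written the reindexing identity is not literally true term by term: the sums defining $\Omega^m_{g,n}$ and ${}^{lvl}\Omega^m_{g,n}$ run over isomorphism classes of trees with no automorphism factors, while your passage from (tree, violating ideal) pairs to (bottom tree, upper forest) pairs is off by orbit counts whenever $T$ has a nontrivial automorphism. This turns out to be harmless, but for a reason worth recording: an automorphism of a rooted tree with labeled legs must move some branch containing no legs at all (take a moved vertex closest to the root; its branch maps to a disjoint isomorphic branch, and labeled legs are fixed pointwise), and the top edge $e$ of a leg-free branch has $a(e)=\sum_{l\in DL(e)}a(l)=0$ as an empty sum, so every term in which the multiplicity discrepancy could occur already vanishes identically because of the prefactor $\prod_{e\in E(T)}a(e)$. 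With that observation added, your induction closes and the equivalence follows.
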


As in the case of Conjecture~\ref{conj:vanishing-m-less-2}, the degree $\deg$ can be understood either as the degree in the tautological ring or the degree in the variables $a_1,\dots,a_n$, this two ways to define the degree are equivalent by construction.

\begin{theorem} \label{thm:equivalence-conjectures-m-geq-2} Conjectures~\ref{conj:vanishing-m-less-2} and~\ref{conj:vanishing-m-less-2-alt} are equivalent.
\end{theorem}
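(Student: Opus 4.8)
The plan is to show that the two classes $\Omega^m_{g,n}$ and ${}^{lvl}\Omega^m_{g,n}$ are in fact \emph{equal} as elements of $R^*(\oM_{g,n+m})\otimes_\QQ Q$, degree by degree; this equality immediately yields the equivalence of the two degree bounds and hence of the two conjectures. The key point is that the level-structure coefficient $C_{lvl}(T,p)$ is an inclusion-exclusion device that, when summed appropriately, reconstructs the original signed sum $\sum_T (-1)^{|E(T)|}\Omega(T)$ together with its homogeneous-degree decomposition.

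First I would fix a degree $d$ and compare the degree-$d$ components of both classes. On the left, expanding $\Omega(v) = a(v)^{1-g(v)}\lambda_{g(v)}\Omega^{[a(v)]}_{\cdots}$ into homogeneous pieces and using the fact (stated in the excerpt) that the degree-$p$ part of $\Omega(v)$ is a homogeneous polynomial of degree $p$ in the $a_i$, one sees that the degree-$d$ part of $\Omega(T)$ is a sum over all degree-label functions $p$ on the vertices of $T$ with $|E(T)| + \sum_v p(v) = d$, each term being exactly $(-1)^{|E(T)|}(\prod_e a(e))(b_T)_*\bigotimes_v(\Omega(v))_{p(v)}$. Thus the degree-$d$ part of $\Omega^m_{g,n}$ is a sum over pairs $(T,p)\in\DLSRT_{g,n,m}$ with $P(T,p)=d$ of $(-1)^{|E(T)|}(\prod_e a(e))(b_T)_*\bigotimes_v(\Omega(v))_{p(v)}$. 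The degree-$d$ part of ${}^{lvl}\Omega^m_{g,n}$ is the sum over the same pairs of $C_{lvl}(T,p)(\prod_e a(e))(b_T)_*\bigotimes_v(\Omega(v))_{p(v)}$. So it suffices to match these term by term — but this is too naive, since the level structure reorganizes the sum across different trees.

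The real content, and the main obstacle, is the combinatorial identity relating $(-1)^{|E(T)|}$ to the level-function sum. The natural approach is to build a bijective/summation correspondence: given a tree $T$ and an admissible level function $\ell$ with maximal level $N$, one can \emph{contract} the tree along levels to produce a coarser tree, and the admissibility inequality on $C_{lvl}$ is engineered to match precisely the degree bound appearing in the definition of the classes assigned to the contracted vertices. I would therefore set up a map that sends a pair (tree $T$, level function $\ell$) to a sequence of successive edge-contractions, and use the pushforward compatibility $(b_{T})_* = (b_{T/\ell})_*\circ(\text{inner pushforwards})$ together with the multiplicativity of the $a(e)$ factors and the $\Omega$-class restrictions along boundary strata. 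The sign $(-1)^{\max\ell(V(T))}$ in $C_{lvl}$ then reproduces the $(-1)^{|E(T)|}$ after one checks that a telescoping inclusion-exclusion over the number of levels collapses correctly; concretely, summing $(-1)^N$ over all ways of refining the trivial (single-level) structure to an $N$-level structure on a fixed edge set yields $(-1)^{|E(T)|}$ via the standard identity $\sum_{N}(-1)^N(\text{ordered set partitions into }N\text{ blocks}) $ evaluated on the poset of levels.

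The hard part will be verifying that the admissibility condition on level functions — in particular the inequality bounding $|\{\ell(v)\le i\}|-1+\sum_{\ell(v)\le i}p(v)$ by $m-2+\sum_{\ell(v)\le i}2g(v)$ — is exactly compatible with the vanishing degree bounds required for the intermediate pushforwards to be nonzero, so that no spurious terms survive and no needed terms are dropped. I would handle this by an induction on the number of levels (equivalently on $\max\ell$), peeling off the top level at each step: the inductive hypothesis gives the identity for trees with fewer levels, and the inequality at the top cut guarantees that cutting $T$ at the highest level produces a valid smaller degree-labeled tree whose root carries the correct degree label $p(v_r)\le 2g(v_r)-2+m$. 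Once this compatibility is established, both classes are seen to equal the same sum over $\DLSRT_{g,n,m}$, so $\Omega^m_{g,n} = {}^{lvl}\Omega^m_{g,n}$, and the equivalence of Conjectures~\ref{conj:vanishing-m-less-2} and~\ref{conj:vanishing-m-less-2-alt} follows at once.
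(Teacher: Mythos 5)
Your proposal rests on the claim that $\Omega^m_{g,n}$ and ${}^{lvl}\Omega^m_{g,n}$ are literally equal, and that claim is false --- the theorem is an equivalence of \emph{conjectures} precisely because the classes differ. Your inclusion--exclusion identity is correct for the unrestricted sum: if $C_{lvl}(T,p)$ were the sum of $(-1)^{\max\ell}$ over \emph{all} level functions (increasing away from the root, no empty levels), it would indeed equal $(-1)^{|E(T)|}$. But $C_{lvl}(T,p)$ sums only over \emph{admissible} level functions, and the excluded ones do not reappear anywhere: for any $T$ with at least two vertices and any $p$ with $p(v_r)>2g(v_r)-2+m$, the inequality at $i=0$ fails for every level function (level $0$ is exactly $\{v_r\}$), so $C_{lvl}(T,p)=0$ while $(-1)^{|E(T)|}\neq 0$ --- the paper itself notes this. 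The resulting discrepancy terms are boundary pushforwards with distinct decorations and have no combinatorial reason to cancel; worse, they can occur in degrees allowed by both conjectures, so literal equality is not even the right target. For instance, take $g=3$, $n=2$, $m=2$, let $T$ have a genus-$0$ root carrying the two frozen legs and two edges to children of genus $1$ and $2$ carrying $\sigma_1$ and $\sigma_2$, and let $p=(1,1,2)$ (so that $p(v)\geq g(v)$ and the vertex classes are nonzero); then $P(T,p)=6=2g-2+m$, yet $C_{lvl}(T,p)=0\neq 1=(-1)^{|E(T)|}$. Finally, the ``hard part'' you defer --- that admissibility is ``exactly compatible'' with vanishing so that ``no spurious terms survive'' --- cannot be verified unconditionally: grouping the excluded level functions by their lowest failing cut, the discrepancy in degrees above the bound is a pushforward of classes of the very type $\Omega^{m'}_{g',n'}$ (the piece of the tree below the cut, with the cut edges playing the role of extra frozen legs) in degrees above \emph{their} conjectural bound. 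Its vanishing is therefore precisely the content of the conjectures, not an input your argument is entitled to use.

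This is why the paper proceeds differently: it proves Theorem~\ref{thm:relation-to-BS}, identifying the $\psi$-decorated analogues $\Psi^m_{g,n}$ and ${}^{lvl}\Psi^m_{g,n}$ with the generating series of~\cite{BS22}, and then invokes the proofs of Theorems 3.4 and 3.10 of that paper, which construct a \emph{triangular change of basis} between the two families over all $(g,n,m)$: each class equals the corresponding one up to corrections built from smaller cases in degrees above their bounds, so the two systems of vanishing statements imply each other by induction. Since that argument uses nothing about the vertex decorations beyond the degrees of the classes, it applies verbatim with $\Omega$-decorations. If you want to salvage your level-peeling induction, the statement to aim for is exactly this conditional one --- assuming one conjecture in all relevant smaller cases, the discrepancy vanishes in degrees above the bound --- rather than the unconditional identity $\Omega^m_{g,n}={}^{lvl}\Omega^m_{g,n}$.
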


So, we can consider Conjecture~\ref{conj:vanishing-m-less-2} as a somehow more streamlined version of Conjecture~\ref{conj:vanishing-m-less-2-alt}. The reason for this more involved reformulation that uses level functions is that this way we can extend our conjecture(s) and theorems to the cases $m=0,1$. Theorem~\ref{thm:equivalence-conjectures-m-geq-2} is a purely combinatorial statement that uses only the degrees of the involved classes; it is proved in Section~\ref{sec:Relation-toBS} below.

Another observation that relates versions of Conjectures~\ref{conj:vanishing-m-less-2} and~\ref{conj:vanishing-m-less-2-alt} for different $m\geq 2$ is the following statement. Let $\pi\colon \oM_{g,n+m} \to \oM_{g,n+m-1}$ be the map that forgets the last marked point.

\begin{theorem} \label{thm:push-forward} For $m>2$ we have 
	\begin{align*}
		\pi_* \Omega^{m}_{g,n} & = (\textstyle\sum_{i=1}^n a_i)\, \Omega^{m-1}_{g,n}; \\
		\pi_* {}^{lvl}\Omega^{m}_{g,n} & = (\textstyle\sum_{i=1}^n a_i)\, {}^{lvl}\Omega^{m-1}_{g,n}.
	\end{align*}
	For $m=2$ we have $\pi_* \Omega^{2}_{g,n} = \pi_* {}^{lvl}\Omega^{2}_{g,n} =0$.
\end{theorem}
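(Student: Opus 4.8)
The plan is to apply $\pi_*$ to each tree summand $\Omega(T)$ of $\Omega^m_{g,n}$ separately, exploiting that the forgotten marked point $\sigma_{n+m}$ is a weight-$0$ frozen leg attached to the root $v_r$ and that the prefactor $\sum_{i=1}^n a_i$ is exactly $a(v_r)$. The first step is to commute $\pi_*$ with the boundary pushforward $(b_T)_*$. Whenever $v_r$ stays stable after deleting $\sigma_{n+m}$, the forgetful map sits in a Cartesian square with the vertexwise forgetful map $\pi_{v_r}\colon\oM_{g(v_r),|H(v_r)|}\to\oM_{g(v_r),|H(v_r)|-1}$ over the relevant strata, so base change gives $\pi_*(b_T)_*=(b_{T'})_*(\pi_{v_r})_*$, with $T'\in\SRT_{g,n,m-1}$ obtained by removing the frozen leg. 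Since $\lambda_{g(v_r)}$ pulls back along $\pi_{v_r}$ and the scalars $a(e)$ and $a(v_r)^{1-g(v_r)}$ are inert, the projection formula localises the whole computation to the pushforward of the $\Omega$-class factor at $v_r$.

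The analytic input I would use from Section~\ref{sec:Omega:classes} is the behaviour of the $\Omega$-class under forgetting a weight-$0$ point: a string-type identity $(\pi_{v_r})_*\Omega(v_r)=a(v_r)\,\Omega(v_r')$ in which the root factor loses one weight-$0$ argument and drops one degree, resting on the homogeneity of Proposition~\ref{prop:homogeneityclassicallimit}. For $m\geq 3$ no root ever destabilises (the only exception being the vacuous case $(g,n,m)=(0,0,3)$, where the target $\oM_{0,2}$ is itself unstable), and $T\mapsto T'$ is a bijection $\SRT_{g,n,m}\to\SRT_{g,n,m-1}$ preserving $|E(T)|$, the edge weights, and the non-root factors; summing the localised identity then yields $\pi_*\Omega^m_{g,n}=a(v_r)\,\Omega^{m-1}_{g,n}$ at once. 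The level-decorated class is handled by the same geometry together with the combinatorial check that $C_{lvl}(T,p)=C_{lvl}(T',p')$, where $p'$ agrees with $p$ except $p'(v_r)=p(v_r)-1$: since $\ell(v_r)=0$ lies below every level, decreasing both $m$ and $p(v_r)$ by one shifts the two sides of the admissibility inequality equally, so the set of admissible level functions, and hence $C_{lvl}$, is unchanged; this gives $\pi_*{}^{lvl}\Omega^m_{g,n}=a(v_r)\,{}^{lvl}\Omega^{m-1}_{g,n}$.

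The case $m=2$ is where I expect the real work, and the main obstacle, to lie. The reduction above still produces the stable contribution $a(v_r)\,\Omega^1_{g,n}$, but the bijection fails precisely on the trees whose root is a genus-$0$ vertex carrying the two frozen legs together with a single edge $e_0$ to a child subtree holding all regular legs. For these $\pi$ contracts the rational root component $\CP^1$, the surviving frozen leg migrates onto the former child, and one lands on a tree in $\SRT_{g,n,1}$ with $e_0$ removed, so the square is no longer Cartesian. The heart of the matter is to evaluate this contracted contribution exactly: the genus-$0$ three-point factor $\Omega(v_r)$ plays the role of a gluing unit, the contracted edge carries $a(e_0)=a(v_r)$, and the edge count drops by one so that $(-1)^{|E(T)|}=-(-1)^{|E(T'')|}$. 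I would show that, with the correct normalisation of the genus-$0$ factor, these destabilisation terms sum to $-a(v_r)\,\Omega^1_{g,n}$, cancelling the stable contribution and leaving $\pi_*\Omega^2_{g,n}=0$; the same computation, using the level compatibility of the previous paragraph, disposes of ${}^{lvl}\Omega^2_{g,n}$. In the genus-$0$ range this can be cross-checked against Theorem~\ref{thm:genus0-mgeq2}, whose degree bound already pushes $\pi_*\Omega^2_{0,n}$ into negative degree. The delicate sign-and-weight bookkeeping in this cancellation, rather than any single geometric input, is the step that I expect to require the most care.
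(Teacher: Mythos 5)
Your overall architecture coincides with the paper's proof: $\pi_*$ is localised at the root vertex; for $m\geq 3$ the frozen-leg-removing bijection $\SRT_{g,n,m}\to\SRT_{g,n,m-1}$, together with the observation that lowering both $m$ and $p(v_r)$ by one shifts the two sides of the admissibility inequality equally (hence preserves $C_{lvl}$), gives the first two identities; and for $m=2$ the stable-root contributions are cancelled by the trees whose genus-zero root carries the two frozen legs and a single edge, via the contraction bijection, the unit value of the genus-zero root decoration, the factor $a(e_0)=\sum_i a_i$, and a sign flip. This is exactly the paper's argument.

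There is, however, a genuine gap at the central analytic step. The string-type identity $(\pi_{v_r})_*\Omega(v_r)=a(v_r)\,\Omega(v_r')$ does not ``rest on the homogeneity of Proposition~\ref{prop:homogeneityclassicallimit}'': homogeneity of the degree-$k$ component as a polynomial in $a_1,\dots,a_n$ carries no information whatsoever about pushforwards along forgetful maps, so your key vertex-level input is left unsupported. The paper derives this identity from Proposition~\ref{prop:pull-back}, which (via Lemmas~\ref{prop:shift-ai} and~\ref{lem:pull-back-range}) rewrites $\Omega(v)$ as a pullback $\pi_m^*(\cdot)$ divided by $\prod_i(1-a(h_i)\psi_i)$, combined with the pushforward formula $\pi_*\prod_i(1-a_i\psi_i)^{-1}=\big(\sum_i a_i\big)\prod_i(1-a_i\psi_i)^{-1}$ of Equation~\eqref{eq:push-forward-psi} and the projection formula; any complete write-up needs this (or an equivalent) derivation. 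A second, smaller issue concerns ${}^{lvl}\Omega^{2}_{g,n}$: the cancelling sign cannot come from $(-1)^{|E(T)|}$, since no such factor appears in the leveled class. It comes from the fact that contracting the genus-zero root suppresses exactly one level in every admissible level function; moreover, since $\pi_*$ drops the root degree by one, the contracted partner must carry root label $p(v_r')+1$, and one must check that with this shift admissible level functions biject (the shift exactly compensates the lost genus-zero, $p=0$ vertex in the inequalities), giving $C_{lvl}(T,p)=-C_{lvl}(\widetilde{T},\widetilde{p})$. Your appeal to ``the same computation, using the level compatibility of the previous paragraph'' does not cover this bookkeeping.
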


This theorem is proved in Section~\ref{sec:Omega-further} along with Theorem~\ref{thm:push-forward-A} stated below.

\subsection{$A$-class conjecture for less than two frozen legs} \label{sec:a-class-leq-1}

First, let us define the $A$-class. 

We consider again the polynomial ring $Q\coloneqq \QQ[a_1,\dots,a_n]$ and define $a\colon H_+(T) \to Q$,  $a\colon H_-(T) \to Q$, and $a\colon E(T)\to Q$ (abusing notation we use the same symbol $a$ for all these maps) by 
\begin{align*}
	a(\sigma_i)& \coloneqq a_i, &i=1,\dots,n; 
	& & a(h)& \coloneqq \textstyle\sum_{l\in DL(h)} a(l), & h\in H_+(T); \\
	a(h)& \coloneqq -\textstyle\sum_{l\in DL(v)} a(l), & h\in H_-(T).
	& & a(e)& \coloneqq \textstyle\sum_{l\in DL(h)} a(l), & e\in E(T).
\end{align*} 
In particular, in this case if two half-edges, $h$ and $h'$, form an edge, then $a(h)+a(h')=0$. Also, for any vertex $v\in V(T)$ we have $\sum_{h\in H(v)} a(h) =0$. Finally, $a(\sigma_{n+1})= -\sum_{i=1}^n a_i$. 

Let $T\in \SRT_{g,n,1}$. Assign to each $v\in V(T)$ the moduli space of curves $\oM_{g(v),|H(v)|}$, where the first $|H_+(v)|$ marked points correspond to the positive half-edges attached to $v$ and ordered in an arbitrary but fixed way and the the last marked point corresponds to the unique negative half-edge attached to $v$. Consider the class
\[
A(v)\coloneqq \lambda_{g(v)}\DR_{g(v)}\big(a(h_1),\dots,a(h_{|H_+(v)|}),a(h_{|H(v)|})\big) \in R^{2g(v)}(\oM_{g(v),|H(v)|})\otimes_{\QQ}Q. 
\]
This class is a homogeneous polynomial of degree $2g(v)$ in $a_1,\dots,a_n$. 

Assign to each $T\in \SRT_{g,n,1}$ the class $A_T$ in $R^*(\oM_{g,n+1})\otimes_{\QQ}Q$ given by 
\[
A(T) \coloneqq  \biggl(\prod_{e\in E(T)} a(e)\biggr)  \biggl(\prod_{v\in V(T)} \frac{\chi(v)}{D\chi(v)}\biggr) (b_T)_* \bigotimes_{v \in V(T)} A(v) \in R^{2g+|E(T)|}(\oM_{g,n+1})\otimes_{\QQ}Q.
\]
Here $(b_T)_*$ is the boundary pushforward map from $\bigotimes_{v \in V(T)} R^{2g(v)}(\oM_{g(v),|H(v)|})\otimes_{\QQ} Q$ to $R^{2g+|E(T)|}(\oM_{g,n+1})\otimes_{\QQ}Q$. The class $A(T)$ s a homogeneous polynomial of degree $2g+|E(T)|$ in $a_1,\dots,a_n$. 

Consider for each $(g,n)$ such that $2g-1+n>0$ the class $A^1_{g,n}\in R^*(\oM_{g,n+1})$ given by $A^1_{g,n} \coloneqq \sum_{T\in \SRT_{g,n,1}} A(T)$. 

\begin{conjecture} \label{conj:m-equal-to-one} For $g\geq 0$, $n\geq 1$ we have $\deg \big({}^{lvl}\Omega^1_{g,n} - A^1_{g,n} \big) \leq 2g-1$. 
\end{conjecture}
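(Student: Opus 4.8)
The plan is to prove the equivalent statement that the homogeneous components of degree at least $2g$ of ${}^{lvl}\Omega^1_{g,n}$ and of $A^1_{g,n}$ coincide. This reformulation is immediate: every summand $A(T)$ lies in $R^{2g+|E(T)|}(\oM_{g,n+1})$, so $A^1_{g,n}$ has no component in degrees below $2g$, and the bound $\deg\big({}^{lvl}\Omega^1_{g,n}-A^1_{g,n}\big)\le 2g-1$ is exactly the assertion
\[
\big({}^{lvl}\Omega^1_{g,n}\big)_d=\big(A^1_{g,n}\big)_d\qquad\text{for all }d\ge 2g,
\]
the components in degrees $g,\dots,2g-1$ (which can only come from the $\Omega$-side) being left unconstrained. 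Thus it suffices to identify the top part of the level-$\Omega$ class with the $A$-class.

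The geometric input is the expression of the double ramification cycle through $\Omega$-classes, i.e.\ Pixton's formula as proved in~\cite{JPPZ}. Since both $A(v)$ and $\Omega(v)$ carry the prefactor $\lambda_{g(v)}$, and since the degree-$g(v)$ part of the $\Omega$-class at a vertex is, up to the relevant normalization, the cycle $\DR_{g(v)}$, one obtains the vertex-local identity $(\Omega(v))_{2g(v)}=A(v)$: multiplication by $\lambda_{g(v)}$ shifts degrees by $g(v)$, so the top degree-$2g(v)$ component of $\Omega(v)$ is $\lambda_{g(v)}$ times the $\DR_{g(v)}$ part of the underlying $\Omega$-class, while the components $(\Omega(v))_{p}$ with $g(v)\le p<2g(v)$ record its lower-degree terms. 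The $\lambda_{g(v)}$ factor is essential here, as it annihilates the excess $\psi$- and boundary-contributions of the Pixton class.

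I would then compare the two global classes by a virtual localization computation in the spirit of Arcara--Sato~\cite{AS09}, carried out on the space of rubber maps to $\mbP^1$ relative to $0$ and $\infty$ that computes $\DR_g$. The $\mbC^*$-fixed loci are indexed by stable rooted trees; the order of the components along the target yields the admissible level functions, the expansion of the equivariant edge (smoothing) factors produces the signs $(-1)^{\max\ell(V(T))}$ that assemble into $C_{lvl}(T,p)$, and the equivariant integration over the rubber scaling produces the rational weights $\chi(v)/D\chi(v)$. Reading the single fixed-point sum in the two natural ways should yield $A^1_{g,n}$ on the one hand and the degree-$\ge 2g$ part of ${}^{lvl}\Omega^1_{g,n}$ on the other, and equating them gives the identity.

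The main obstacle is the global combinatorial reconciliation of these two readings, which is genuinely not tree-by-tree. On the $A$-side a tree with $k$ edges contributes only in degree $2g+k$ with weight $\prod_{v}\chi(v)/D\chi(v)$, whereas on the $\Omega$-side a fixed total degree $d$ receives contributions from many pairs $(T,p)$ with $|E(T)|+\sum_v p(v)=d$, including trees with fewer edges but $\sum_v p(v)>2g$; moreover the boundary pushforwards $(b_T)_*$ of distinct trees are not linearly independent in $R^*(\oM_{g,n+1})$. The alternating level sums $C_{lvl}$ are precisely the mechanism that makes these contributions collapse: for $m=1$ the admissibility inequality at level $0$ already forbids every level function with $\max\ell\ge 1$ once all $p(v)=2g(v)$, so with top labels only the one-vertex tree survives, and the remaining lower-label contributions must reorganize, through the vertex-local identity above, into the $\chi/D\chi$-weighted DR expression. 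Making this reorganization precise is the delicate step; I would first establish it for one-edge trees, where the level inequality and the edge expansion are fully explicit, and then set up an induction on the number of vertices that propagates the vertex-local comparison through $(b_T)_*$.
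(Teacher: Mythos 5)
You are attempting to prove what the paper itself leaves open: this statement is Conjecture~\ref{conj:m-equal-to-one}, and the paper establishes it only for $g=0$ (Theorem~\ref{thm:genus0-m1}) and for $n=0,1$ (Theorem~\ref{thm:anyg-n0n1-m1}), plus numerical checks for $(g,n)=(1,2),(2,2)$; so there is no full proof in the paper to compare against, and a correct complete argument would settle an open problem. Your opening reformulation is fine: since each $A(T)$ is homogeneous of degree $2g+|E(T)|$, the bound is equivalent to $({}^{lvl}\Omega^1_{g,n})_d=(A^1_{g,n})_d$ for all $d\geq 2g$. The first genuine gap is the ``vertex-local identity'' $(\Omega(v))_{2g(v)}=A(v)$, which you present as a consequence of~\cite{JPPZ}. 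It is not. The result of~\cite{JPPZ} identifies $\DR_g$ with (a normalization of) the \emph{constant term in $r$} of Pixton's $r$-dependent class, i.e.\ an evaluation of an $r$-polynomial at $r=0$, whereas $\Omega(v)$ evaluates the Chiodo class at the \emph{finite} value $r=a(v)$, $x=a(v)$. Multiplying by $\lambda_{g(v)}$ restricts to compact type but does not remove the $r$-dependence there: on trees the edge weights enter through expressions of the shape $w(r-w)$, which differ at $r=a(v)$ from their constant terms by terms linear in $r$. The comparison between the Chiodo class at $r=a(v)$ and $\DR_{g(v)}$ is precisely (the classical limit of) the conjectural observations of~\cite{BlotLew} that this paper is built around; the paper can prove such a vertex comparison only when the vertex carries at most one nonzero argument, via Propositions~\ref{prop:reducltion-to-lambda} and~\ref{prop:reducltion-to-lambdan0}, where the $\Omega$-class collapses to $\lambda$-classes, and in genus $0$ via Lemma~\ref{lem:g0vanishing}. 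So your key geometric input is an unproven assertion of essentially the same depth as the conjecture itself. (Relatedly, $2g(v)$ is not the ``top'' degree of $\Omega(v)$: e.g.\ by Proposition~\ref{prop:reducltion-to-lambda} it has nonzero components in all degrees up to the dimension, which is why the vanishing statements of Section~\ref{sec:vanishing-geq-2} are nontrivial.)

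The second gap is the one you yourself flag as ``the delicate step'': the global reorganization matching the $C_{lvl}(T,p)$-weighted sum of homogeneous $\Omega$-pieces against the $\prod_v \chi(v)/D\chi(v)$-weighted $\DR$ sum. This is the entire difficulty --- the pushforwards $(b_T)_*$ of distinct trees are not independent, and contributions in a fixed degree mix trees with different edge numbers and labels --- and your proposal defers it rather than performs it. Your localization sketch (rubber maps, fixed loci indexed by level trees, edge factors giving the signs, rubber integration giving $\chi/D\chi$) is a reasonable heuristic and is indeed in the spirit of how the paper uses~\cite{AS09} and~\cite{BS22}; but no fixed-locus contribution is computed, and in the paper this technique only yields relations among $\lambda$-class expressions, namely Equations~(\ref{eq: AS m=2}), (\ref{eq: AS any m}) and~(\ref{eq: Arcara with DR}), which is exactly why its unconditional results stop at $n\leq 1$. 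Your correct observation that for $m=1$ the level-$0$ admissibility inequality kills all multi-vertex trees with labels $p(v)=2g(v)$ is a useful first consistency check, but as written the argument is a research plan with its two central steps missing, not a proof.
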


This conjecture is supported by the experiments with the \texttt{admcycles} package~\cite{admcycles} for $(g,n)=(1,2)$ and $(2,2)$ as well as the following two theorems:

\begin{theorem} \label{thm:genus0-m1} For $g=0$, $n\geq 2$ we have ${}^{lvl}\Omega^1_{0,n} = A^1_{0,n}$. 
\end{theorem}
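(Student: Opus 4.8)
\emph{Strategy.} The plan is to reduce both sides to explicit genus-$0$ expressions and then to match them via a vertexwise evaluation of the $\Omega$-class combined with a combinatorial identity for the level coefficients. First I would exploit that in genus $0$ every vertex has $g(v)=0$, so that $\lambda_{g(v)}=\lambda_0=1$ and $\DR_0$ is the fundamental class of $\oM_{0,|H(v)|}$ (the genus-$0$ double ramification cycle is trivial whenever the weights sum to zero, which holds here since $\sum_{h\in H(v)}a(h)=0$). Hence $A(v)=1$ for every vertex, and
\[
A^1_{0,n}=\sum_{T\in\SRT_{0,n,1}}\Bigl(\prod_{e\in E(T)}a(e)\Bigr)\Bigl(\prod_{v\in V(T)}\tfrac{\chi(v)}{D\chi(v)}\Bigr)\,[\oM_T],\qquad [\oM_T]:=(b_T)_*\!\!\bigotimes_{v\in V(T)}\!\!1,
\]
a completely explicit class supported on the boundary strata, whose degree-$k$ part is exactly the contribution of the trees with $k$ edges. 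In parallel I would record the genus-$0$ shape of the vertex class $(\Omega(v))_{p}$ from Section~\ref{sec:Omega:classes}: it carries no $\lambda_{>0}$ factor, its tautological degree-$p$ component is homogeneous of degree $p$ in the $a_i$ by Proposition~\ref{prop:homogeneityclassicallimit}, and its degree-$0$ part equals $1$. In particular the degree-$0$ parts of the two sides already agree, since only the one-vertex tree contributes there (with $C_{lvl}=1$).

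\emph{The combinatorial core.} I would then group the sum ${}^{lvl}\Omega^1_{0,n}=\sum_{(T,p)}\Omega(T,p)$ by the underlying tree $T\in\SRT_{0,n,1}$ and, for fixed $T$, perform the sum over the degree labels $p$ together with the signed sum over admissible level functions defining $C_{lvl}(T,p)=\sum_{\ell}(-1)^{\max\ell(V(T))}$. The target is a telescoping/inclusion–exclusion identity showing that this signed sum, weighted by the genus-$0$ contributions $(\Omega(v))_{p(v)}$, collapses onto the single term $\bigl(\prod_e a(e)\bigr)\bigl(\prod_v \chi(v)/D\chi(v)\bigr)[\oM_T]$. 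Concretely, the higher-degree contributions $p(v)>0$ at a vertex should be traded, using the genus-$0$ relations in $R^*(\oM_{0,|H(v)|})$, for boundary pushforwards that refine $T$; the level structure is precisely the bookkeeping making these refinements cancel in signed pairs except for a leading term, which reproduces the weights $\chi(v)/D\chi(v)$. Alternatively one could attempt an induction on $n$: both classes are polynomial in $a_1,\dots,a_n$ with tautological coefficients, and a genus-$0$ class on $\oM_{0,n+1}$ is pinned down by its restrictions to the boundary divisors (which factor through smaller ${}^{lvl}\Omega^1$ and $A^1$ classes) together with a forgetful-pushforward relation of the type in Theorem~\ref{thm:push-forward}; matching these for both sides, with the base case on $\oM_{0,3}$ being trivial, would force equality.

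\emph{Main obstacle.} The crux is the combinatorial identity of the first route: proving that the signed sum over admissible level functions produces the double-ramification-hierarchy weights $\prod_v \chi(v)/D\chi(v)$ while simultaneously converting all higher-degree $\Omega$-vertex contributions into the purely edge-generated degrees that the $A$-side carries (recall $A(v)=1$, so on the $A$-side degree comes only from edges). This requires controlling the interaction between the degree-label constraints, the admissibility inequalities defining $\mathcal{L}(T,p)$, and the genus-$0$ expansion of the $\Omega$-class, and it is here that the special features of genus $0$---the vanishing of all $\lambda_{>0}$ and the triviality of $\DR_0$---must be used decisively. Once the identity is established tree by tree, summing over $T\in\SRT_{0,n,1}$ yields ${}^{lvl}\Omega^1_{0,n}=A^1_{0,n}$, which is exactly the genus-$0$ case of Conjecture~\ref{conj:m-equal-to-one}, where the bound $\deg(\cdots)\le 2g-1=-1$ degenerates to the vanishing of the difference.
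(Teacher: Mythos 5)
Your reduction of the $A$-side is fine: in genus $0$ one has $\lambda_0=1$ and $\DR_0=1$, so $A(v)=1$ and $A^1_{0,n}$ is the explicit boundary-supported sum you write. But the proposal has two genuine gaps, and they are precisely the two ingredients the paper's proof consists of. First, you never actually determine the genus-$0$ vertex class $\Omega(v)$. Knowing that its degree-$p$ part is homogeneous of degree $p$ in the $a_i$ (Proposition~\ref{prop:homogeneityclassicallimit}) and that its degree-$0$ part is $1$ does not determine the class, so your ``genus-$0$ expansion of the $\Omega$-class'' is an unknown quantity and the vertexwise matching cannot even start. The paper's key input here is Lemma~\ref{lem:g0vanishing}: $\bigl(a\,\Omega^{[a]}_{0,n}(a,0;a-a_1,\dots,a-a_n)\bigr)_k=0$ for $k>0$, proved by observing (via \cite{JKV1}) that in genus $0$ with positive primary fields the $\Omega$-class is, up to normalization, the total Chern class of an honest vector bundle, and then computing by Riemann--Roch that this bundle has rank zero. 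Combined with Proposition~\ref{prop:pull-back} this yields $\Omega(v)=\Psi(v)=\prod_i(1-a(h_i)\psi_i)^{-1}$ at every genus-$0$ vertex, hence ${}^{lvl}\Omega^1_{0,n}={}^{lvl}\Psi^1_{0,n}$. Nothing in your proposal substitutes for this step.

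Second, your ``combinatorial core'' --- that the signed sum over admissible level functions, with $\psi$-decorated vertices, collapses onto $\bigl(\prod_{e}a(e)\bigr)\bigl(\prod_{v}\chi(v)/D\chi(v)\bigr)(b_T)_*1$ --- is exactly the identity ${}^{lvl}\Psi^1_{0,n}=A^1_{0,n}$, which is the genus-$0$ theorem of \cite[Theorem 2.3]{BS22}. The paper does not reprove it; after the reduction $\Omega(v)=\Psi(v)$ it simply quotes it. You, by contrast, present it as a ``target'' and explicitly flag it as the ``main obstacle,'' offering only the words ``telescoping/inclusion--exclusion'' in place of an argument; this is where the entire difficulty lives, so the proposal as written is a strategy outline, not a proof. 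Your fallback induction route is likewise not carried out, and it has a real problem: a genus-$0$ class is \emph{not} pinned down by its restrictions to boundary divisors (on $\oM_{0,4}$ a point class restricts to zero on every boundary point), so you would have to prove that the combination of boundary restrictions and the forgetful pushforward separates the relevant classes, which you do not do. The correct repair is either to import Lemma~\ref{lem:g0vanishing} and Proposition~\ref{prop:pull-back} and then cite \cite{BS22}, or to supply a genuine proof of the level-function identity --- neither of which the proposal contains.
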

In other words, Conjecture~\ref{conj:m-equal-to-one} holds in genus $0$. 

\begin{theorem} \label{thm:anyg-n0n1-m1} For $g\geq 1$, $n=0,1$, we have $\deg \big({}^{lvl}\Omega^1_{g,n} - A^1_{g,n} \big) \leq 2g-1$. 
\end{theorem}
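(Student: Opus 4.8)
The plan is to treat the cases $n=0$ and $n=1$ separately, the first being essentially vacuous and the second carrying the genuine content. For $n=0$ the ring $Q=\QQ$ has no variables, so any homogeneous polynomial of positive degree in $a_1,\dots,a_n$ vanishes; since the tautological degree of every summand equals its $a$-degree by construction, both ${}^{lvl}\Omega^1_{g,0}$ and $A^1_{g,0}$ can only survive in tautological degree $0$. But the degree-$0$ part of each summand is forced to vanish once $g\geq 1$: on the $A$-side the single-vertex tree contributes $\lambda_g\DR_g(0)=0$ and every multi-vertex tree carries a factor $\prod_{e}a(e)=0$, while on the $\Omega$-side the condition $p(v)\geq g(v)$ kills the only candidate (the root of the trivial tree with $p(v_r)=0$). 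Hence both classes vanish and the bound $\deg\leq 2g-1$ holds trivially.

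For $n=1$ I would first exploit the factor $\prod_{e\in E(T)}a(e)$. With a single regular leg $\sigma_1$ one has $a(h)=a_1$ exactly when $\sigma_1$ is a descendant of $h$ and $a(h)=0$ otherwise, so an edge $e$ contributes $a(e)=a_1$ iff it lies on the path from $v_r$ to the vertex carrying $\sigma_1$, and $a(e)=0$ for every edge off this path. Thus both $A^1_{g,1}$ and ${}^{lvl}\Omega^1_{g,1}$ reduce to sums over chains $u_0\to u_1\to\dots\to u_k$ (with $\sigma_2$ frozen at the root $u_0$ and $\sigma_1$ on $u_k$), indexed by ordered compositions $(g_0,\dots,g_k)$ of $g$ into positive parts; stability forces every $g_j\geq 1$ since each chain vertex has exactly two half-edges. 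This collapse of the branching combinatorics to a one-dimensional convolution is the structural reason $n=0,1$ is tractable while general $n$ is not.

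On a chain the level combinatorics is completely explicit. The admissibility conditions force $\ell$ to be strictly increasing along the chain with no empty levels, so the only candidate is $\ell(u_j)=j$, giving $C_{lvl}=(-1)^k$ whenever the cumulative degree inequalities
\[
\sum_{j=0}^{i}p_j\leq 2\sum_{j=0}^{i}g_j-(i+1),\qquad 0\leq i\leq k-1,
\]
all hold, and $C_{lvl}=0$ otherwise. Both chain sums then become convolutions glued by $(b_T)_*$ at the nodes: the $A$-side uses the two-point blocks $\lambda_{g_j}\DR_{g_j}(a_1,-a_1)$ in degree $2g_j$ with rational weights $\chi(u_j)/D\chi(u_j)=g_j/\sum_{j'\geq j}g_{j'}$, while the $\Omega$-side uses the graded blocks $(\Omega(u_j))_{p_j}$ in degrees $g_j\leq p_j\leq 3g_j-1$ with signs $(-1)^k$ and the truncation above. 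I would encode each as a transfer operator acting on a genus-graded space and compare them in degrees $\geq 2g$.

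The geometric heart of the argument is a per-vertex comparison of the two blocks, and here I would follow the rubber localization of Arcara--Sato~\cite{AS09}, exactly as in the $m\geq 2$ case of Theorem~\ref{thm:anygn1n0-mgeq2} and in~\cite{BS22}: the class $\lambda_{g}\DR_{g}(a_1,-a_1)$ arises as the principal fixed-point contribution of a rubber map to $\mathbb{P}^1$ with the two markings over $0$ and $\infty$ and profile $\pm a_1$, the intermediate tube contributions produce precisely the $\Omega$-blocks, and the ordering of the fixed loci by level reproduces the level functions (hence the signs $(-1)^k$ and the cumulative truncation). Summing the localization expression and splitting off the leading contribution identifies $A^1_{g,1}$, with the remainder supported in degree $\leq 2g-1$ by the dimension count on the rubber space. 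The main obstacle I anticipate is precisely this matching: showing that the alternating level sum with the cumulative truncation reproduces, in degrees $\geq 2g$, the DR chain sum with the weights $\chi/D\chi$, and that the genuinely higher pieces of the $\Omega$-blocks (the parts of $(\Omega(u_j))_{p_j}$ with $p_j>2g_j$, which have no $\DR$ counterpart and which, e.g.\ through $p_k\leq 3g_k-1$, reach total degree up to $2g+g_k-1$) either cancel across chains of different lengths or fall below degree $2g$. I expect this to require an induction on the chain length $k$ (equivalently on $g$), feeding the already-proven genus-$0$ identity (Theorem~\ref{thm:genus0-m1}) as the base of the node-by-node comparison.
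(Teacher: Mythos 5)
Your combinatorial groundwork is accurate and matches the paper's setting: for $n=0$ both classes indeed vanish (the paper gets the same conclusion via Proposition~\ref{prop:reducltion-to-lambdan0} together with $\lambda_g^2=0$); for $n=1$ the factors $a(e)$ do collapse both sums to chains with all $g_j\geq 1$, the unique admissible level function on a chain is $\ell(u_j)=j$, your cumulative inequality is the correct specialization of the admissibility condition at $m=1$, and the weights $\chi(u_j)/D\chi(u_j)=g_j/\sum_{j'\geq j}g_{j'}$ are right. You have also correctly identified the engine: localization on degree-one (relative) stable maps to $\mathbb{P}^1$ in the style of Arcara--Sato, which is exactly what the paper uses in Section~\ref{sec:Arcara:Sato}.

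There are, however, two genuine gaps. First, you assert that the intermediate fixed-locus contributions "produce precisely the $\Omega$-blocks." They do not, a priori: virtual localization produces Hodge-type classes $\Lambda_{g_j}/(1-\psi)$ and, on the relative side, terms $\psi^a\,\DR_{g_j}(1,-1)$, whereas $(\Omega(u_j))_{p_j}$ is defined by Chiodo's formula as a pushforward from the moduli of $r$-th roots. The bridge is exactly Proposition~\ref{prop:reducltion-to-lambda}: for $n=1$ one has $a^{1-g}\lambda_{g}\,\Omega^{[a]}_{g,1+m}(a,0;-a,0,\dots,0)=a^{g}\lambda_{g}\Lambda_g^{[a]}/(1-a\psi_1)$, proved by analyzing the graph sum \eqref{eqn:JPPZExp} (the factor $\lambda_g$ restricts to compact type, all weights are forced to zero, and Mumford's formula appears). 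Without this identification --- which is where the specific normalization $a^{1-g}\lambda_g\Omega^{[a]}(a,0;\cdot)$ earns its keep --- your localization relations say nothing about ${}^{lvl}\Omega^1_{g,1}$. Second, the step you yourself call "the main obstacle" (matching the alternating, truncated level sum against the DR chain sum with weights $\chi/D\chi$ in degrees $\geq 2g$) is left as a plan, an induction on chain length with genus $0$ as base, and this is precisely the nontrivial content of the theorem. The paper does not re-derive it: after multiplying the generalized Arcara--Sato relations \eqref{eq: AS m=2}, \eqref{eq: AS any m} and the DR relation \eqref{eq: Arcara with DR} by $\lambda_g$ and applying Propositions~\ref{prop:reducltion-to-lambda} and~\ref{prop:reducltion-to-lambdan0}, it observes that the resulting identities \eqref{eq:AS-Omega} are formally identical to the $\Psi$-class relations \eqref{eq:AS-Psi} underlying \cite[Theorem 2.2]{BS22}, and then imports the purely combinatorial, dimension-only argument of Section 5 of \emph{op.~cit.} verbatim. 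To close your proof you must either carry out your proposed induction in full or, as the paper does, reduce to that already-established combinatorics; as written, the argument is incomplete at its crucial step.
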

In other words, Conjecture~\ref{conj:m-equal-to-one} holds in any genus for $n=0$ and $n=1$. 

\subsubsection{$A$-class conjecture for no frozen legs} 
Let $\pi\colon \oM_{g,n+1}\to \oM_{g,n}$ be the map that forgets the last marked point. By~\cite[Lemma 2.2]{BGR19}, the following class
\[
A^0_{g,n} \coloneqq \frac{\pi_* A^1_{g,n}}{a_1+\cdots+a_n}
\]
is still a polynomial in $a_1,\dots,a_n$, that is, $A^0_{g,n}\in R^*(\oM_{g,n})\otimes_{\QQ}Q$. 

\begin{conjecture} \label{conj:m-equal-to-zero} For $g\geq 0$, $n\geq 1$ we have $\deg \big({}^{lvl}\Omega^0_{g,n} - A^0_{g,n} \big) \leq 2g-2$. 
\end{conjecture}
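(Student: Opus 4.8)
The plan is to deduce Conjecture~\ref{conj:m-equal-to-zero} from the $m=1$ statement, Conjecture~\ref{conj:m-equal-to-one}, through the very forgetful pushforward $\pi\colon\oM_{g,n+1}\to\oM_{g,n}$ that is used to define $A^0_{g,n}$. The central ingredient I would isolate is a pushforward relation for the level-decorated $\Omega$-classes that parallels the one defining $A^0_{g,n}$, namely
\[
\pi_*\, {}^{lvl}\Omega^1_{g,n} = \Big(\textstyle\sum_{i=1}^n a_i\Big)\,{}^{lvl}\Omega^0_{g,n}.
\]
Granting this, and recalling that $\pi_* A^1_{g,n}=(\sum_i a_i)A^0_{g,n}$ by the definition of $A^0_{g,n}$ together with the polynomiality of the quotient from~\cite[Lemma 2.2]{BGR19}, one immediately gets
\[
{}^{lvl}\Omega^0_{g,n}-A^0_{g,n}=\frac{1}{\sum_{i=1}^n a_i}\,\pi_*\big({}^{lvl}\Omega^1_{g,n}-A^1_{g,n}\big),
\]
and the conjecture reduces to a degree count on the right-hand side.

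The second step is the degree bookkeeping. By construction each homogeneous piece of ${}^{lvl}\Omega^1_{g,n}-A^1_{g,n}$ of Chow degree $d$ is simultaneously homogeneous of degree $d$ in $a_1,\dots,a_n$. Now $\pi_*$ lowers the Chow degree by one (the fibre of $\pi$ is one-dimensional) while preserving the $a$-degree, and dividing by $\sum_i a_i$ lowers the $a$-degree by one while preserving the Chow degree; performed in succession these two operations lower the common degree by exactly one and restore the balance between Chow degree and $a$-degree. Therefore, starting from $\deg\big({}^{lvl}\Omega^1_{g,n}-A^1_{g,n}\big)\le 2g-1$, which is Conjecture~\ref{conj:m-equal-to-one}, we would obtain $\deg\big({}^{lvl}\Omega^0_{g,n}-A^0_{g,n}\big)\le 2g-2$, as claimed. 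In particular, in the ranges where the $m=1$ statement is already a theorem, namely genus $0$ (Theorem~\ref{thm:genus0-m1}) and $n=0,1$ (Theorem~\ref{thm:anyg-n0n1-m1}), this reduction would establish Conjecture~\ref{conj:m-equal-to-zero} unconditionally; note that for the degree count one in fact only needs the displayed relation up to terms whose quotient by $\sum_i a_i$ has degree at most $2g-2$.

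The hard part is the pushforward identity itself. Superficially it is the $m\to m-1$ step of Theorem~\ref{thm:push-forward}, but that theorem is stated only for $m\ge 2$, and at the boundary value $m=2$ it degenerates to $\pi_*{}^{lvl}\Omega^2_{g,n}=0$ rather than to $(\sum_i a_i){}^{lvl}\Omega^1_{g,n}$; so one must show that the transition $m=1\to 0$ behaves like the generic case $m>2$ and not like the degenerate case $m=2$. Concretely, I would feed the forgetful map into the defining sum over $(T,p)\in\DLSRT_{g,n,1}$ and track, term by term, the two possibilities: forgetting the unique frozen leg at the root while the root stays stable, versus the destabilizing situation where a genus-$0$ root with only two positive half-edges becomes unstable and the corresponding boundary stratum is contracted. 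Matching these contributions requires combining the behaviour of the vertex classes $(\Omega(v))_{p(v)}$ under the forgetful morphism (their string- and dilaton-type properties from Section~\ref{sec:Omega:classes}) with the shift by the $m-2$ term in the level-admissibility inequality that defines $C_{lvl}(T,p)$. Verifying that the top-degree contributions reassemble into $(\sum_i a_i){}^{lvl}\Omega^0_{g,n}$, with none of the global cancellation that forces the vanishing at $m=2$, is the delicate point, and it is exactly where the analysis of Section~\ref{sec:Omega-further} would need to be pushed to the boundary value $m=1$.
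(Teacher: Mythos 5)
Your overall strategy coincides with the paper's: Conjecture~\ref{conj:m-equal-to-zero} is deduced from Conjecture~\ref{conj:m-equal-to-one} via the pushforward relation, which is exactly the content of Theorem~\ref{thm:push-forward-A}, and your degree bookkeeping (the Chow degree drops by one under $\pi_*$, the $a$-degree drops by one under division by $\sum_i a_i$, and the two gradings stay balanced) is the same implicit argument. Since $\pi_* A^1_{g,n}=(\sum_i a_i)A^0_{g,n}$ holds by the very definition of $A^0_{g,n}$, the identity you need is equivalent to $\pi_*\,{}^{lvl}\Omega^1_{g,n}=(\sum_i a_i)\,{}^{lvl}\Omega^0_{g,n}$, as you correctly isolate.

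The gap is that you leave this identity unproven --- you call it ``the hard part'' and only sketch an attack --- and the sketch misjudges where the difficulty lies. You anticipate a delicate matching between the stable case and a destabilizing case (a genus-$0$ root with two positive half-edges becoming unstable after forgetting the frozen leg), and the need to exclude the global cancellation that forces $\pi_*{}^{lvl}\Omega^2_{g,n}=0$. In fact, for $m=1$ the destabilizing trees never contribute in the first place: for any tree with more than one vertex, the admissibility condition for level functions applied to the cut at level $0$ reads $p(v_r)\le 2g(v_r)-2+m=2g(v_r)-1$, so a genus-$0$ root would force $p(v_r)\le -1$; hence $\mathcal{L}(T,p)=\emptyset$ and $C_{lvl}(T,p)=0$ for every such tree (and the single-vertex genus-$0$ tree with two regular legs is excluded because $\oM_{0,2}$ does not exist). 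Consequently the transition $m=1\to 0$ is structurally identical to the generic case $m\ge 3$, which is exactly how the paper proves Theorem~\ref{thm:push-forward-A} in Section~\ref{sec:Omega-further}: every contributing tree keeps its shape; the admissibility inequalities for $(T,p)$ at $m=1$ coincide with those for $(T,p')$ at $m=0$, where $p'(v_r)=p(v_r)-1$, because both sides of the inequality shift by one, so $C_{lvl}$ is unchanged; and the root decoration acquires the factor $\sum_{i=1}^n a_i$ by combining Proposition~\ref{prop:pull-back} with Equation~\eqref{eq:push-forward-psi} via the projection formula. Supplying this observation is what turns your outline into a complete proof of the reduction (and hence, as you note, into unconditional proofs of Theorems~\ref{thm:g0-m0} and~\ref{thm:anyg-m0} in the ranges where the $m=1$ statement is known).
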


This conjecture is supported by the following three theorems:

\begin{theorem}\label{thm:g0-m0} For $g=0$, $n\geq 3$ we have ${}^{lvl}\Omega^0_{0,n} = A^0_{0,n}$. 
\end{theorem}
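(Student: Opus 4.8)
\emph{Strategy.} The plan is to deduce this genus-$0$, no-frozen-leg identity from its one-frozen-leg companion, Theorem~\ref{thm:genus0-m1}, by pushing forward along the forgetful map $\pi\colon\oM_{0,n+1}\to\oM_{0,n}$ that drops the single frozen leg $\sigma_{n+1}$. On the $A$-side there is nothing to prove: by the very definition in Section~\ref{sec:a-class-leq-1} we have $A^0_{0,n}=\pi_*A^1_{0,n}/(a_1+\cdots+a_n)$, and \cite[Lemma~2.2]{BGR19} guarantees the quotient lies in $R^*(\oM_{0,n})\otimes_\QQ Q$, so that $\pi_*A^1_{0,n}=(a_1+\cdots+a_n)\,A^0_{0,n}$. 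Thus the entire problem is to produce the matching statement on the $\Omega$-side.

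\emph{Key input.} I would first establish the pushforward relation
\[
\pi_*\,{}^{lvl}\Omega^1_{g,n}=(a_1+\cdots+a_n)\,{}^{lvl}\Omega^0_{g,n},
\]
the analogue of Theorem~\ref{thm:push-forward} for the transition $m=1\to m=0$ (this is the forthcoming Theorem~\ref{thm:push-forward-A}). The argument proceeds as for Theorem~\ref{thm:push-forward}: one evaluates $\pi_*$ termwise over $\DLSRT_{g,n,1}$, uses that the forgotten frozen leg carries weight $a(\sigma_{n+1})=-(a_1+\cdots+a_n)$ together with the standard behaviour of boundary pushforwards and of the $\Omega$-decoration under forgetting a marked point, and then verifies that the admissible-level combinatorics encoded in $C_{lvl}$ for $m=1$ collapses onto that for $m=0$, the difference between the admissibility inequalities for consecutive values of $m$ being exactly the shift induced by the removed leg; the surviving overall scalar is $a_1+\cdots+a_n$.

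\emph{Conclusion.} Granting this relation, apply $\pi_*$ to the equality ${}^{lvl}\Omega^1_{0,n}=A^1_{0,n}$ of Theorem~\ref{thm:genus0-m1} and substitute the two pushforward formulas:
\[
(a_1+\cdots+a_n)\,{}^{lvl}\Omega^0_{0,n}=\pi_*\,{}^{lvl}\Omega^1_{0,n}=\pi_*A^1_{0,n}=(a_1+\cdots+a_n)\,A^0_{0,n}.
\]
Since $R^*(\oM_{0,n})\otimes_\QQ Q$ is a free module over the integral domain $Q=\QQ[a_1,\dots,a_n]$ and $a_1+\cdots+a_n$ is a non-zero element, multiplication by it is injective and may be cancelled, yielding ${}^{lvl}\Omega^0_{0,n}=A^0_{0,n}$.

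\emph{Main obstacle.} The substantive step is the $\Omega$-side pushforward relation at $m=1\to0$. It is not covered by Theorem~\ref{thm:push-forward}, which treats $m>2$ and records the exceptional vanishing $\pi_*\,{}^{lvl}\Omega^2=0$; the relation for $m=1$ must therefore be proved separately, and it is \emph{restored} with the scalar $a_1+\cdots+a_n$ rather than vanishing. The delicate point is that forgetting the last frozen leg can destabilize the root vertex, which in the $m=0$ picture carries no negative half-edge at all, so that bookkeeping of which trees and which admissible level functions survive the contraction, and of how the genus-$0$ decoration at the root transforms, is where the real work sits. Everything downstream is the formal cancellation displayed above.
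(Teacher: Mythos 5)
Your proposal is correct and follows essentially the same route as the paper, which obtains Theorem~\ref{thm:g0-m0} as a direct corollary of Theorem~\ref{thm:push-forward-A} (the $m=1\to 0$ push-forward identity, proved graph-by-graph via Proposition~\ref{prop:pull-back}, Equation~\eqref{eq:push-forward-psi} and the projection formula, with the $A$-side holding by the very definition of $A^0_{g,n}$) applied to the genus-zero identity of Theorem~\ref{thm:genus0-m1}, followed by cancellation of the nonzerodivisor $a_1+\cdots+a_n$. The one refinement worth recording: the destabilization of the root that you flag as the main obstacle is in fact vacuous, because for $m=1$ the level-admissibility inequality at level $0$ forces $p(v_r)\leq m-2+2g(v_r)=2g(v_r)-1$ whenever the tree has more than one vertex, so every tree with a genus-$0$ root and $|H_+(v_r)|=2$ has $C_{lvl}(T,p)=0$ and contributes nothing, while single-vertex trees never destabilize when the target $\oM_{0,n}$ exists ($n\geq 3$); this is exactly why the paper's proof of Theorem~\ref{thm:push-forward-A} can assert that for $m=1$ the push-forward does not change the structure of the graph.
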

In other words, Conjecture~\ref{conj:m-equal-to-zero} holds in genus $0$. 

\begin{theorem}\label{thm:anyg-m0} For $g\geq 1$, $n=1$, and $g\geq 2$, $n=0$, we have $\deg \big({}^{lvl}\Omega^0_{g,n} - A^0_{g,n} \big) \leq 2g-2$. 
\end{theorem}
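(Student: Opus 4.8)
The plan is to obtain Theorem~\ref{thm:anyg-m0} as a one-step descent from the $m=1$ bound of Theorem~\ref{thm:anyg-n0n1-m1}, applying the forgetful push-forward $\pi\colon\oM_{g,n+1}\to\oM_{g,n}$ that drops the (last) frozen leg. The point is that both families of classes are compatible with $\pi_*$ in the same way: on the $A$-side this is built into the definition, $\pi_* A^1_{g,n}=(a_1+\cdots+a_n)\,A^0_{g,n}$ (via \cite[Lemma 2.2]{BGR19}), while on the level-structure side the parallel identity
$$\pi_*\,{}^{lvl}\Omega^1_{g,n}=(a_1+\cdots+a_n)\,{}^{lvl}\Omega^0_{g,n}$$
is the $m=1$ instance of the push-forward statement Theorem~\ref{thm:push-forward-A}, i.e.\ the analogue at the boundary between $m=1$ and $m=0$ of Theorem~\ref{thm:push-forward}. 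I would take both relations as the starting point.

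Granting them, the case $n=1$ is a one-line degree count. Put $D_1\coloneqq{}^{lvl}\Omega^1_{g,1}-A^1_{g,1}$, so that $\deg D_1\leq 2g-1$ by Theorem~\ref{thm:anyg-n0n1-m1}. Since $\pi$ has relative complex dimension one, $\pi_*$ lowers the Chow degree by exactly one, hence $\deg\pi_* D_1\leq 2g-2$. On the other hand, by $\QQ[a_1]$-linearity of $\pi_*$ together with the two push-forward relations,
$$\pi_* D_1=a_1\big({}^{lvl}\Omega^0_{g,1}-A^0_{g,1}\big).$$
As $a_1$ is a nonzero scalar polynomial, multiplication by it preserves the Chow degree, so $\deg\big({}^{lvl}\Omega^0_{g,1}-A^0_{g,1}\big)=\deg\pi_* D_1\leq 2g-2$. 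The two readings of $\deg$ agree here automatically: both classes are homogeneous (the degree-$d$ component is a degree-$d$ polynomial in $a_1$), so the drop of the Chow degree by one under $\pi_*$ is matched exactly by the drop of the $a_1$-degree by one upon dividing by $a_1$.

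For $n=0$ the factor $a_1+\cdots+a_n$ is the empty sum and the literal division in the definition of $A^0_{g,0}$ degenerates; this is the one genuinely delicate point. I would make sense of the quotient exactly as the definition of $A^0_{g,n}$ does, namely through the polynomiality furnished by \cite[Lemma 2.2]{BGR19}, and use the matching behaviour on the $\Omega$-side supplied by Theorem~\ref{thm:push-forward-A}; the hypothesis $g\geq 2$ guarantees that the target $A^0_{g,0}$ is a sensible object ($2g-2>0$). With that interpretation fixed, the single degree drop under $\pi_*$ reproduces the bound $2g-2$ verbatim, exactly as in the $n=1$ computation.

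Modulo the cited push-forward relation, the degree bookkeeping above is entirely formal, so the hard part is really the level-class identity $\pi_*\,{}^{lvl}\Omega^1_{g,n}=(\textstyle\sum_i a_i)\,{}^{lvl}\Omega^0_{g,n}$ (Theorem~\ref{thm:push-forward-A}). Establishing it means controlling the sum over $\DLSRT_{g,n,1}$ under forgetting the frozen leg: the contributions in which the frozen leg sits on a vertex that becomes unstable must be reorganized into the lower strata, and one must check that the admissible-level coefficients $C_{lvl}$ recombine correctly against the string/dilaton behaviour of each vertex factor $(\Omega(v))_{p(v)}$, so that the whole expression reassembles as $(\textstyle\sum_i a_i)\,{}^{lvl}\Omega^0_{g,n}$. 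Once this identity is secured, Theorem~\ref{thm:anyg-m0} is precisely the degree count carried out above.
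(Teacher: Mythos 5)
For $n=1$ your argument is exactly the paper's proof: the paper obtains Theorem~\ref{thm:anyg-m0} as a direct corollary of the push-forward identity of Theorem~\ref{thm:push-forward-A} together with the $m=1$ bound of Theorem~\ref{thm:anyg-n0n1-m1}, and your degree bookkeeping is correct --- $\pi_*$ drops the Chow degree by one, multiplication by $a_1$ is Chow-degree-preserving and injective on $R^*(\oM_{g,1})\otimes_{\QQ}\QQ[a_1]$, and the homogeneity built into the classes makes the two readings of $\deg$ agree. You are also right that the substantive input is Theorem~\ref{thm:push-forward-A} itself, which the paper proves separately and which you may legitimately quote here.

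Your treatment of $n=0$, however, has a genuine gap: the mechanism you propose cannot work in that case. For $n=0$ the factor $\sum_{i=1}^n a_i$ is the zero element of $Q=\QQ$, so Theorem~\ref{thm:push-forward-A} reads $\pi_*\big({}^{lvl}\Omega^1_{g,0}-A^1_{g,0}\big)=0$; it is a vacuous identity and carries no information about ${}^{lvl}\Omega^0_{g,0}-A^0_{g,0}$, since a class multiplied by zero cannot be recovered by any ``interpretation of the quotient''. The correct resolution is not formal descent but degeneration: for $n=0$ there are no variables $a_i$ at all, and since by construction the $R^d$-component of each of these classes is a homogeneous polynomial of degree $d$ in $a_1,\dots,a_n$, every component with $d\geq 1$ vanishes identically; hence ${}^{lvl}\Omega^0_{g,0}-A^0_{g,0}$ is concentrated in Chow degree $0$ and the bound $\deg\leq 2g-2$ is automatic for $g\geq 2$. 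Indeed the classes essentially collapse outright: every edge factor $a(e)$ vanishes, every vertex of positive genus carries a factor $\lambda_{g(v)}$ forcing its degree-zero part to vanish, and on the $A$-side $A^1_{g,0}=\lambda_g\DR_g(0)=\pm\lambda_g^2=0$, consistent with the degeneration the paper records in its discussion of the $n=0$ case. Replacing your $n=0$ paragraph with this observation completes the proof; as written, that paragraph asserts a descent that does not exist.
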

In other words, Conjecture~\ref{conj:m-equal-to-zero} holds in any genus for $n=0$ and $n=1$. 

\begin{theorem}\label{thm:push-forward-A} We have $\pi_* \big({}^{lvl}\Omega^1_{g,n} - A^1_{g,n} \big) = \big(\sum_{i=1}^n a_i\big)\big({}^{lvl}\Omega^0_{g,n} - A^0_{g,n}\big)$.
\end{theorem}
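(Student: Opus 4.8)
The plan is to split the pushforward into its two natural pieces and to observe that the $A$-class piece is a tautology. By linearity, $\pi_*\big({}^{lvl}\Omega^1_{g,n} - A^1_{g,n}\big) = \pi_* {}^{lvl}\Omega^1_{g,n} - \pi_* A^1_{g,n}$, and by the very definition of $A^0_{g,n}$ (together with the polynomiality granted by \cite[Lemma 2.2]{BGR19}) one has $\pi_* A^1_{g,n} = \big(\sum_{i=1}^n a_i\big) A^0_{g,n}$. Hence the theorem is equivalent to the single identity $\pi_* {}^{lvl}\Omega^1_{g,n} = \big(\sum_{i=1}^n a_i\big)\, {}^{lvl}\Omega^0_{g,n}$, which is precisely the $m=1\to m=0$ analogue of Theorem~\ref{thm:push-forward} not covered there; I would prove it by the same method, specialized to one frozen leg.

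Since the unique frozen leg $\sigma_{n+1}$ is always attached to the root $v_r$, the forgetful map $\pi$ acts only on the moduli factor of the root. Writing each summand of ${}^{lvl}\Omega^1_{g,n}$ as $(b_T)_*$ of a product over vertices and using the compatibility $\pi_*(b_T)_* = (b_{T'})_*\pi'_*$, where $\pi'$ forgets the frozen-leg marked point on $\oM_{g(v_r),|H(v_r)|}$ and $T'$ is the tree obtained by deleting that leg, it suffices to push forward the root factor $\Omega(v_r)$. The frozen leg enters with $\Omega$-argument $0$, so I would invoke the forgetful/string behaviour of the $\Omega$-classes from Section~\ref{sec:Omega:classes}, namely $\pi'_*\,\Omega^{[x]}_{g,k+1}(\dots,0) = x\cdot \Omega^{[x]}_{g,k}(\dots)$ with $x = a(v_r) = \sum_{i=1}^n a_i$; the scalar $a(v_r)^{1-g(v_r)}$ and the class $\lambda_{g(v_r)}$ (which pulls back) pass through $\pi'_*$ untouched. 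This produces exactly the factor $\sum_{i=1}^n a_i$ and lowers the cohomological degree of the root class by one.

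The remaining work is combinatorial. Deleting the frozen leg sets up a bijection $(T,p)\mapsto (T',p')$ between degree-labeled trees with $m=1$ and those with $m=0$ that fixes all non-root data and shifts $p'(v_r) = p(v_r)-1$ (the degree drop just recorded). I would then check that this shift intertwines the two admissibility conditions: subtracting $1$ from $p(v_r)$ lowers the left-hand side of the $m=0$ inequality by $1$ at every level (as $v_r$ lies at level $0$, hence below every cut), which is exactly compensated by the change $m-2=-1\mapsto m-2=-2$ on the right-hand side. Consequently the sets of admissible level functions, and hence the coefficients $C_{lvl}(T,p)=C_{lvl}(T',p')$, are preserved, and the tree sums match term by term. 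Terms with $p(v_r)=0$ (which forces $g(v_r)=0$) have no $m=0$ partner, but for a stable root their pushforward vanishes for dimension reasons, consistently.

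The main obstacle is the destabilization locus: when $v_r$ has genus $0$ and exactly three half-edges (so $p(v_r)=0$ is forced and $\oM_{0,3}$ is a point), $\pi$ contracts the rational root component instead of fibering over it, so $\pi'$ is not the usual projection and the clean formula above does not apply directly. Here the factor $\sum_{i=1}^n a_i$ instead emerges from the genus-$0$ prefactor $a(v_r)^{1-0}=a(v_r)$, and one must show that the isomorphism identifying the contracted stratum reorganizes these boundary contributions into precisely the remaining summands of $\big(\sum_{i=1}^n a_i\big)\,{}^{lvl}\Omega^0_{g,n}$, with the level-function signs matching after the old root is absorbed into a child (the new root) or two branches are joined. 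I expect this bookkeeping — identical in spirit to the destabilization analysis required for the $m\geq 2$ cases of Theorem~\ref{thm:push-forward}, and to be carried out uniformly with it — to be the only genuinely delicate step.
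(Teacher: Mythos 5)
Your overall route coincides with the paper's: the $A$-class half is a tautology from the definition of $A^0_{g,n}$, the content is the identity $\pi_*\,{}^{lvl}\Omega^1_{g,n}=\big(\sum_{i=1}^n a_i\big){}^{lvl}\Omega^0_{g,n}$, and your verification that the level coefficients match ($C_{lvl}(T,p)=C_{lvl}(T',p')$ under $p(v_r)\mapsto p(v_r)-1$, the shift of the left-hand side at every cut compensating the shift $m-2=-1\mapsto -2$) is exactly what the paper leaves implicit. However, your key analytic step is unjustified as stated. The formula you invoke, $\pi'_*\,\Omega^{[x]}_{g,k+1}(\dots,0)=x\cdot\Omega^{[x]}_{g,k}(\dots)$, is not among the stated properties of $\Omega$-classes and is \emph{false} for primary fields in the standard range: there, by Lemma~\ref{lem:pull-back-range} (with $s=0$), the class with a $0$ entry \emph{is} a pull-back along $\pi'$, so its pushforward vanishes. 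It does hold for the arguments $-a(h_i)$ occurring at the root, but only because the shift of Lemma~\ref{prop:shift-ai} converts these into in-range arguments at the cost of the factor $\prod_i\big(1-a(h_i)\psi_i\big)^{-1}$, which is not a pull-back and whose pushforward produces the factor $\sum_i a(h_i)=a(v_r)$ via Equation~\eqref{eq:push-forward-psi}. In other words, the correct derivation is Proposition~\ref{prop:pull-back} combined with Equation~\eqref{eq:push-forward-psi} and the projection formula --- which is precisely the paper's mechanism --- and without it your factor of $\sum_i a_i$ has no source.

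The second issue is that the step you defer as ``the only genuinely delicate step'' is in fact empty, so your worry is misplaced (and, as written, your proof is incomplete on a point that needs only one line). For $m=1$, if $T$ has more than one vertex, the admissible-level inequality at the cut $i=0$ reads $p(v_r)\le m-2+2g(v_r)=2g(v_r)-1$, which is impossible when $g(v_r)=0$; hence every multi-vertex tree with genus-zero root has $C_{lvl}(T,p)=0$ and contributes nothing to ${}^{lvl}\Omega^1_{g,n}$. The only other genus-zero-root terms are single-vertex trees, whose pushforward vanishes for dimension reasons, as you noted. So every contributing tree keeps a stable root after the frozen leg is forgotten, and no contraction or stratum-reorganization ever occurs. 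The mechanism you sketch (factor emerging from $a(v_r)^{1}$, the root absorbed into a child, sign bookkeeping) is the one needed for $m=2$ --- where the level-$0$ inequality $p(v_r)\le 2g(v_r)$ does admit genus-zero roots and the resulting pairwise cancellation gives $\pi_*\Omega^2_{g,n}=0$ in Theorem~\ref{thm:push-forward} --- not for the present theorem. With these two corrections your argument closes up and agrees with the paper's proof.
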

In other words, Conjecture~\ref{conj:m-equal-to-zero} follows from Conjecture~\ref{conj:m-equal-to-one}. In particular Theorems~\ref{thm:g0-m0} and~\ref{thm:anyg-m0} are direct corollaries of Theorem~\ref{thm:push-forward-A} and Theorems~\ref{thm:genus0-m1} and~\ref{thm:anyg-n0n1-m1}.  Theorem~\ref{thm:push-forward-A} is proved in Section~\ref{sec:Omega-further} along with Theorem~\ref{thm:push-forward}.

\subsection{Alternative expressions in terms of $\psi$-classes} \label{sec:Relation-toBS} The structure of the tautological classes used in Conjectures~\ref{conj:vanishing-m-less-2},~\ref{conj:vanishing-m-less-2-alt},~\ref{conj:m-equal-to-one}, and~\ref{conj:m-equal-to-zero} as well as the respective theorems stating their special cases repeat to some extent the conjectures and statements of~\cite{BS22}. Let us explain what are the necessary adjustments to reproduce the expressions discussed in~\cite{BS22}. 

All constructions presented in Sections~\ref{sec:vanishing-geq-2},~\ref{sec:vanishing-geq-2-alt}, and~\ref{sec:a-class-leq-1} are based on the definition of the class $\Omega(v)$ that we assign to a vertex $v$ in a stable rooted tree $T$ or its homogeneous component $(\Omega(v))_{p(v)}$ that we assign to a vertex $v$ in a degree-labeled stable rooted tree $(T,p)$. Now, replace the class $\Omega(v)$ with the class 
\begin{align*}
\Psi(v)& =\Psi_{g(v),|H(v)|}\Big(-a(h_1),\dots,-a(h_{|H_+(v)|}),\underbrace{0,\ldots,0}_{|H_-(v)|}\Big)\bigg)
\\
& \coloneqq \prod_{i=1}^{|H_+(v)|} \frac{1}{1-a(h_i)\psi_i} \in R^*(\oM_{g(v),|H(v)|})\otimes_{\QQ}Q
\end{align*}
(we use the same convention as before, that is, the first $|H_+(v)|$ marked points correspond to the positive half-edges and the last $|H_-(v)|$ correspond to the negative half-edges). With this new assignment one can repeat all steps of the constructions above in order to define $\Psi^m_{g,n}$ for $m\geq 2$ and ${}^{lvl}\Psi^m_{g,n}$ for $m\geq 0$. 

\begin{theorem} \label{thm:relation-to-BS} The vanishing of $(\Psi^m_{g,n})_d$ in degrees $d\geq  2g-1+m$, $m\geq 2$, as well as the vanishing of $({}^{lvl}\Psi^m_{g,n}-\delta^m_0 A^0_{g,n}-\delta^m_1 A^1_{g,n})_d$ in degrees $d\geq  2g-1+m$ is equivalent to~\cite[Conjectures 1, 2, 3]{BS22}.
\end{theorem}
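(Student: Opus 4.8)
The plan is to establish a dictionary between the $\Psi$-decorated constructions of this paper and the $\psi$-class constructions of~\cite{BS22}, and then to observe that, once the decorations are matched, the two sides produce literally the same tautological classes, so that the degree and vanishing assertions become identical. I expect the three conjectures of~\cite{BS22} to correspond, respectively, to the three regimes $m\geq 2$, $m=1$, and $m=0$ of the present statement, the Kronecker deltas $\delta^m_0$, $\delta^m_1$ recording the double-ramification correction present only in the latter two. Accordingly, the first thing I would do is recall the precise definitions of~\cite{BS22}: their stable rooted trees carrying a level structure, the decoration of each vertex by the $\psi$-class generating series, the edge weights, and the DR contributions entering the low-$m$ cases, stating \cite[Conjectures 1, 2, 3]{BS22} in their original form and tracking their degree bounds, which have the shape $d\geq 2g-1+m$.

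Next I would match the vertex decorations. After replacing $\Omega(v)$ by $\Psi(v)=\prod_{i=1}^{|H_+(v)|}(1-a(h_i)\psi_i)^{-1}$, the class attached to each vertex is exactly a $\psi$-class generating series in the variables $a(h_i)$, and extracting the homogeneous component of degree $p(v)$ recovers the monomial $\psi$-decorations of~\cite{BS22}. The identification of the weight $a(h)=\sum_{l\in DL(h)}a(l)$ at a positive half-edge with the corresponding weight in~\cite{BS22} is immediate from the descendant-leg bookkeeping, while for the $A$-class terms the vertex class $\lambda_{g(v)}\DR_{g(v)}(\dots)$ together with the normalizations $\prod_{v}\chi(v)/D\chi(v)$ is to be matched with the DR-based contribution there.

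The combinatorial core is the comparison of the two encodings of the level data. The class ${}^{lvl}\Psi^m_{g,n}$ is a sum over degree-labeled trees weighted by $C_{lvl}(T,p)=\sum_{\ell}(-1)^{\max\ell(V(T))}$, whereas~\cite{BS22} organizes the same information as a sum over trees already carrying a level structure. The equivalence of these two bookkeepings is precisely the inclusion-exclusion identity underlying Theorem~\ref{thm:equivalence-conjectures-m-geq-2}; since that argument uses only the degrees of the involved classes and not the specific shape of the vertex decoration, it applies verbatim to the $\Psi$-decorated classes. In particular it identifies $\Psi^m_{g,n}$ (no level structure, $m\geq 2$) with ${}^{lvl}\Psi^m_{g,n}$ at the level of the relevant degree components, so the two halves of the statement are mutually consistent, and I would then align the degree bounds: the vanishing of $(\Psi^m_{g,n})_d$ and of $({}^{lvl}\Psi^m_{g,n}-\delta^m_0 A^0_{g,n}-\delta^m_1 A^1_{g,n})_d$ for $d\geq 2g-1+m$ translates, under the dictionary, into exactly the degree bounds of \cite[Conjectures 1, 2, 3]{BS22}.

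The main obstacle is this third step. One must verify that the admissibility conditions on level functions --- in particular the degree inequalities involving $p(v)$ and the bound $m-2+\sum 2g(v)$ --- correspond exactly to the combinatorial constraints on level structures in~\cite{BS22}, and that all conventions (signs, orderings of marked points, the factors $\lambda_{g(v)}$ and $\chi(v)/D\chi(v)$, and the DR normalization) are carried through consistently. This bookkeeping is unenlightening but delicate, since a single mismatch in convention would break the claimed equivalence; I therefore expect the proof to be short in ideas but demanding in the precision with which the two sets of definitions are reconciled.
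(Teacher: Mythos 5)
Your proposal misses the actual mathematical content of this equivalence and mislocates the difficulty. The conjectures of \cite{BS22} are not stated in terms of generating series in formal variables $a_1,\dots,a_n$ with vertices decorated by $\prod_i (1-a(h_i)\psi_i)^{-1}$: they concern classes $B^m_{g,\bar d}$ indexed by integer tuples $\bar d=(d_1,\dots,d_n)$, given by \cite[Theorem 3.10]{BS22} (resp.\ \cite[Equation 3.2]{BS22} in the non-leveled case) as sums over degree-labeled trees of pushforwards of \emph{monomials} $\prod_i \psi_i^{q(h_i)}$ weighted by the rational coefficients
\[
\frac{\prod_{h\in H_+(T)} \big(\sum_{l\in DL(h)}(d(l)+1) - \sum_{h'\in DH(h)} (q(h')+1) \big)_{q(h)+1}}{\prod_{i=1}^n (d_i+1)!}.
\]
The substance of the paper's proof is to show that the generating function $\sum_{\bar d} B^m_{g,\bar d}\prod_i a_i^{d_i}$ equals ${}^{lvl}\Psi^m_{g,n}$, and this is done tree by tree via a nontrivial vertex identity that resums these Pochhammer factors (essentially the multinomial theorem, using $(c+1)_{q+1}/(c+1)!=1/(c-q)!$) into the homogeneous component $(\Psi(v))_{p(v)}$ together with the correct powers of $a(v)$, which in turn account for the edge factors $\prod_{e} a(e)$. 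Your assertion that ``extracting the homogeneous component of degree $p(v)$ recovers the monomial $\psi$-decorations of \cite{BS22}'' and that the two sides then ``produce literally the same tautological classes'' states exactly this identity without proving it; the dictionary you defer to bookkeeping is the theorem.

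Your ``combinatorial core'' is also misplaced, in two ways. First, reconciling the two encodings of level data requires no work at all: \cite[Theorem 3.10]{BS22} is already organized as a sum over degree-labeled stable rooted trees with precisely the coefficients $C_{lvl}(T,p)$, so the level bookkeeping on the two sides agrees by inspection. Second, invoking Theorem~\ref{thm:equivalence-conjectures-m-geq-2} as an ingredient is circular within the paper's logic: that theorem is deduced in the paper as a corollary of Theorem~\ref{thm:relation-to-BS} combined with the combinatorial triangular change established in the proofs of \cite[Theorems 3.4 and 3.10]{BS22}. The relation between the non-leveled and leveled halves of the statement flows through the present theorem, not into it.
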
 

\begin{remark}\label{rem:inequ-degree-label} Note that using classes $\Omega(v)$ as opposed to $\Psi(v)$ reduces the number of graphs that might contribute non-trivially. Indeed, as we mentioned above, the degree of $\Omega(v)$ is bounded from below by $g(v)$, hence we are only interested in the degree-labeled trees with $p(v)\geq g(v)$ for each $v\in (T,p)$. It is no longer the case for $\Psi(v)$, and it is the reason why we haven't included this inequality in the definition of degree-labeled stable rooted trees. 
\end{remark}

\begin{proof}[Proof of Theorem~\ref{thm:relation-to-BS}] In order to prove this theorem in the degree-leveled case, it is sufficient to identify ${}^{lvl}\Psi^m_{g,n}$ with the generating function $\sum_{\bar d} B^{m}_{g,{\bar d}} \prod_{i=1}^n a_i^{d_i}$, where $B^{m}_{g,{\bar d}}$ is given by~\cite[Theorem 3.10]{BS22}. We recall the latter expression (adapted to the notation that we use in the present paper) and explain the identification. 
	
	Let $(T,p)\in \DLSRT_{g,n,m}$. Introduce an extra function $q\colon H_+(T)\to \ZZ_{\geq 0}$ such that for each $v\in V(T)$ we have $p(v) = \sum_{h\in H_+(v)} q(h)$. Denote $\mathcal{Q}(T,p)$ the set of such functions. Then $B^m_{g,\bar d}$, $\bar d = (d_1,\dots,d_n)$, is defined as 
	\begin{align*}
		B^m_{g,\bar d} & \coloneqq  \sum_{(T,p)\in \DLSRT_{g,n,m}} C_{lvl}(T,p) \sum_{q\in \mathcal{Q}(T,p)} 
		\Bigg((b_T)_* \bigotimes_{v \in V(T)} \prod_{i=1}^{|H_+(v)|} \psi_i^{q(h_i)} \Bigg) \times
		\\ & \qquad \qquad \qquad \qquad \qquad 
		\frac{\prod_{h\in H_+(T)} \big(\sum_{l\in DL(h)}(d(l)+1) - \sum_{h'\in DH(h)} (q(h')+1) \big)_{q(h)+1}}{\prod_{i=1}^n (d_i+1)!}.
	\end{align*}
	Here $d(l)$ is defined as $d(\sigma_i)=d_i$ and $(s)_{t}=s(s-1)\cdots (s-t+1)$ denotes the Pochhammer symbol and it is assumed that $\sum_{h\in H_+(T)} q(h) + |E(T)| = \sum_{i=1}^n d_i$. 
	
	In order to prove that $\sum_{\bar d} B^m_{g,\bar d} \prod_{i=1}^n a_i^{d_i} = {}^{lvl}\Psi^m_{g,n}$, it is sufficient to show that the expressions match for each $(T,p)\in \DLSRT_{g,n,m}$ separately. This is done by applying the following identity at each vertex of $T$:
	\[
	\sum_{\substack{q\colon H_+(v)\to \ZZ_{\geq 0} \\ \sum_{h\in H_+(v)} q(h) = p(v)}} \prod_{i=1}^{|H_+(v)|} \Bigg(\psi_i^{q(h_i)} a(h_i)^{c_i}  \frac{(c_i+1)_{q(h_i)+1}}{(c_i+1)!} \Bigg) = (\Psi(v))_p \cdot a(v) \cdot \frac{a(v)^{\sum_{i=1}^n c_i-p-1}}{\big(\sum_{i=1}^n c_i-p\big)!}
	\]
	for suitable $c_i\geq q(h_i)$, $i=1,\dots,|H_+(v)|$.
	
	In the case of the usual $\Psi^m_{g,n}$, it has to be compared with the generating function $\sum_{\bar d} \tilde B^{m}_{g,{\bar d}} \prod_{i=1}^n a_i^{d_i}$, where $\tilde B^{m}_{g,{\bar d}}$ is given by~\cite[Equation 3.2]{BS22}. The argument is then completely parallel to the one presented above in the case of degree-leveled expressions. 
\end{proof}

Theorem~\ref{thm:relation-to-BS} provides a compact reformulation of the classes introduced and considered in~\cite{BGR19,BHS22,BS22}. But, most importantly, in the context of this paper it allows to automatically transfer the purely combinatorial equivalences proved in~\cite{BS22} to the case of classes $\Omega^{m}_{g,n}$ and ${}^{lvl}\Omega^{m}_{g,n}$ considered in the present paper. For instance, we can immediately prove Theorem~\ref{thm:equivalence-conjectures-m-geq-2}.

\begin{proof}[Proof of Theorem~\ref{thm:equivalence-conjectures-m-geq-2}] It is a direct corollary of Theorem~\ref{thm:relation-to-BS} and the proofs of~\cite[Theorems 3.4 and 3.10]{BS22} --- the latter proofs provide a combinatorially described triangular change that relates $\{\Psi^{m}_{g,n}\}$ and  $\{{}^{lvl}\Psi^{m}_{g,n}\}$, and since nothing but the vanishing dimensions of the classes are used, we can replace them by $\{\Omega^{m}_{g,n}\}$ and  $\{{}^{lvl}\Omega^{m}_{g,n}\}$ in that argument. 
\end{proof}

\section{Background on $\Omega$-classes}
\label{sec:Omega:classes}

\subsection{Definition and formula}
In \cite{Mum83}, Mumford derived a formula for the Chern character of the Hodge bundle on the moduli space of curves $\overline{\mathcal{M}}_{g,n}$ in terms of tautological classes and Bernoulli numbers. 
%
%
A generalisation of Mumford's formula was computed in \cite{Chi08}. The moduli space $\overline{\mathcal{M}}_{g,n}$ is substituted by the proper moduli stack $\overline{\mathcal{M}}_{g;a}^{r,s}$ of $r$-th roots of the line bundle
\begin{equation*}
	\omega_{\log}^{\otimes s}\biggl(-\sum_{i=1}^n a_i p_i \biggr),
 \end{equation*}
where $\omega_{\log} = \omega(\sum_i p_i)$ is the log-canonical bundle, $r$ and $s$ are integers with $r$ positive, and $a_1,\dots,a_n$ are integers 
satisfying the modular constraint
\begin{equation*}
	a_1 + a_2 + \cdots + a_n \equiv (2g-2+n)s \pmod{r}.
\end{equation*}
This condition guarantees the existence of a line bundle whose $r$-th tensor power is isomorphic to $\omega_{\log}^{\otimes s}(-\sum_i a_i p_i)$. Let $\pi \colon \overline{\mathcal{C}}_{g;a}^{r,s} \to \overline{\mathcal{M}}_{g;a}^{r,s}$ be the universal curve, and $\mathcal{L} \to \overline{\mathcal C}_{g;a}^{r,s}$ the universal $r$-th root. In complete analogy with the case of moduli spaces of stable curves, one can define $\psi$-classes and $\kappa$-classes. There is moreover a natural forgetful morphism
\begin{equation*}
	\epsilon \colon
	\overline{\mathcal{M}}^{r,s}_{g;a}
	\longrightarrow
	\overline{\mathcal{M}}_{g,n}
\end{equation*}
which forgets the choice of the line bundle.
It can be turned into an unramified covering in the orbifold sense of degree $2g - 1$ by slightly modifying the structure of $\overline{\mathcal{M}}_{g,n}$, introducing an extra $\ZZ_r$ stabilizer for each node of each stable curve (see~\cite{JPPZ}).


Let $B_m(x)$ denote the $m$-th Bernoulli polynomial, that is the polynomial defined by the generating series
\begin{equation*}
	\frac{te^{tx}}{ e^t - 1} = \sum_{m=0}^{\infty} B_{m}(x)\frac{t^m}{m!}.
\end{equation*}
The evaluations $B_m(0) = (-1)^m B_m(1) = B_m$ recover the usual Bernoulli numbers. Chiodo's formula provides an explicit formula for the Chern characters of the derived pushforward of the universal $r$-th root $\ch_m(r,s;
\vec a) = \ch_m(R^{\bullet} \pi_{\ast}{\mathcal L})$.

\begin{theorem}[\cite{Chi08}]
	The Chern characters $\ch_m(r,s;\vec a)$ of the derived pushforward of the universal $r$-th root have the following explicit expression in terms of $\psi$-classes, $\kappa$-classes, and boundary divisors:
	\begin{equation} \label{eqn:Chiodo:formula}
		\ch_m(r,s;\vec a)
		=
		\frac{B_{m+1}(\tfrac{s}{r})}{(m+1)!} \kappa_m
		-
		\sum_{i=1}^n \frac{B_{m+1}(\tfrac{a_i}{r})}{(m+1)!} \psi_i^m
		+
		\frac{r}{2} \sum_{a=0}^{r-1} \frac{B_{m+1}(\tfrac{a}{r})}{(m+1)!} \, j_{a,\ast} \frac{(\psi')^m - (-\psi'')^m}{\psi' + \psi''}. 
	\end{equation}
	Here $j_a$ is the boundary morphism that represents the boundary divisor with multiplicity index $a$ at one of the two branches of the corresponding node, and $\psi',\psi''$ are the $\psi$-classes at the two branches of the node.
\end{theorem}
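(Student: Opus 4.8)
\emph{The plan} is to derive \eqref{eqn:Chiodo:formula} by Grothendieck--Riemann--Roch (GRR) applied to the universal curve $\pi\colon \overline{\mathcal C}_{g;a}^{r,s}\to \overline{\mathcal M}_{g;a}^{r,s}$ and the universal $r$-th root $\mathcal L$, following Mumford's computation of $\ch(\mathbb E)$, which is recovered as the special case $r=s=1$, $\vec a=0$. The starting point is GRR in the form
\[
\ch\big(R^\bullet\pi_*\mathcal L\big)=\pi_*\big(\ch(\mathcal L)\cdot\Td(\omega_\pi^\vee)\big),
\]
where $\omega_\pi$ is the relative dualizing sheaf of the nodal family, so that the Todd factor has to be read through the normalization sequence and contributes an extra term supported on the nodal locus. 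The essential input is the fractional weight data of $\mathcal L$: writing $K\coloneqq c_1(\omega_\pi)$, the defining isomorphism $\mathcal L^{\otimes r}\cong\omega_{\log}^{\otimes s}(-\sum_i a_i p_i)$ together with $\omega_{\log}=\omega_\pi(\sum_i p_i)$ forces $\mathcal L$ to carry weight $s/r$ along the interior, weight $a_i/r$ (in the orbifold sense) along each section $p_i$, and complementary weights $a/r$ and $(r-a)/r$ on the two branches of a node of multiplicity index $a$.

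The computation then splits by the support of the push-forward integrand. In the interior the only tautological class is $K$, so $\ch(\mathcal L)\,\Td(\omega_\pi^\vee)=e^{(s/r)K}\tfrac{K}{1-e^{-K}}$; extracting the degree-$(m+1)$ part and using the Bernoulli generating series $\tfrac{t\,e^{tx}}{e^t-1}=\sum_m B_m(x)\tfrac{t^m}{m!}$ with $x=s/r$, the push-forward $\pi_*K^{m+1}=\kappa_m$ produces the first term $\tfrac{B_{m+1}(s/r)}{(m+1)!}\kappa_m$. Along each section the restriction of the integrand is controlled by $\psi_i=K|_{p_i}$ and the self-intersection $[p_i]^2=-\psi_i[p_i]$; feeding the weight $a_i/r$ into the same generating series and tracking the sign coming from the self-intersection yields the second term $-\sum_i\tfrac{B_{m+1}(a_i/r)}{(m+1)!}\psi_i^m$.

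The third piece, the nodal contribution, is the technical heart. I would work with the local model of the universal $r$-th root at a node of the universal curve, where $\mathcal L$ is determined by its multiplicity index $a\in\{0,\dots,r-1\}$ and carries the complementary weights $a/r$, $(r-a)/r$ on the two branches. Tensoring the normalization sequence $0\to\mathcal O_{\mathcal C}\to\nu_*\mathcal O_{\widetilde{\mathcal C}}\to\mathcal O_{\mathrm{nodes}}\to 0$ with $\mathcal L$ and pushing forward expresses the node contribution as a difference of the two branch generating series divided by the first Chern class $\psi'+\psi''$ of the smoothing deformation. Expanding with the Bernoulli series at $x=a/r$ and summing over $a$ gives $\tfrac{r}{2}\sum_{a=0}^{r-1}\tfrac{B_{m+1}(a/r)}{(m+1)!}\,j_{a,*}\tfrac{(\psi')^m-(-\psi'')^m}{\psi'+\psi''}$, the prefactor $\tfrac{r}{2}$ reflecting the $\ZZ_r$-stabilizer attached to each node (which makes $\epsilon$ a degree $2g-1$ cover) together with the two-fold symmetry exchanging the branches.

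I expect this last step to be the main obstacle. Unlike the interior and section terms, which transcribe Mumford's calculation almost verbatim with the weights $s/r$ and $a_i/r$ inserted, the node term demands the correct local description of the twisted line bundle at a node, a derived push-forward computation in the orbifold/stacky setting, and careful bookkeeping of the index-$a$ sum and the combinatorial prefactor $\tfrac{r}{2}$; this is precisely where the generalization of Mumford's formula to $r$-th roots carries its genuinely new content.
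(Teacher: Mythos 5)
This statement is not proved in the paper at all: it is recalled as background and attributed to \cite{Chi08}, so there is no internal proof to measure your attempt against. Your proposal therefore has to stand on its own as a proof of Chiodo's formula, and judged that way it is a strategy outline rather than a proof. The strategy itself is the right one (and is, in essence, Chiodo's): apply Grothendieck--Riemann--Roch to the universal curve of the moduli of $r$-th roots, mimic Mumford's computation of $\ch(\mathbb{E})$, and sort the push-forward into interior, section, and node contributions. The interior and section terms you describe do come out of the Bernoulli generating series with arguments $s/r$ and $a_i/r$, and the resulting terms match \eqref{eqn:Chiodo:formula}.

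The genuine gap is exactly the part you flag yourself: the nodal contribution, i.e.\ the term $\tfrac{r}{2}\sum_{a}\tfrac{B_{m+1}(a/r)}{(m+1)!}\,j_{a,*}\tfrac{(\psi')^m-(-\psi'')^m}{\psi'+\psi''}$, is asserted with its expected shape but never derived, and it is precisely the new content of the theorem relative to Mumford. Deriving it requires more than the normalization sequence: the universal curve here is a \emph{twisted} curve with $\mu_r$-stabilizers at the nodes, so the relevant tool is GRR for Deligne--Mumford stacks (To\"en's theorem), in which the Chern character is evaluated on the inertia stack; the Bernoulli polynomials at the fractional values $a/r$ arise from averaging the character of the $\mu_r$-action on the fibers of $\mathcal L$ over the twisted sectors, not from formally inserting a weight into Mumford's series. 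The same mechanism, not a generic ``$\ZZ_r$-stabilizer plus branch symmetry'' argument, is what produces the prefactor $\tfrac{r}{2}$ and the correct relation between $\psi'+\psi''$ on the root stack and on $\oM_{g,n}$. Until that computation is actually carried out, the proposal establishes only the two easy terms of the formula; writing ``I expect this last step to be the main obstacle'' concedes that the theorem's essential assertion remains unproven.
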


We can then consider the family of Chern classes pushed forward to the moduli spaces of stable curves
\begin{equation*}
	\Omega_{g,n}^{[x]}(r,s;a_1,\dots,a_n)
	\coloneqq 
	\epsilon_{\ast}  \exp{\Biggl(
		\sum_{m=1}^\infty (-1)^m x^{m} (m-1)! \, \ch_m(r,s;\vec{a})
	\Biggr)}
	\in
	R^*(\overline{\mathcal{M}}_{g,n}).
\end{equation*}
 We will omit the variable $x$ when $x = 1$. Notice that we recover Mumford's formula for the Hodge class when $r = s = 1$, $x=1$, and $a = (1,\dots,1)$.  The indices $a_1,\dots,a_n$ are often referred to as ``primary fields''.
 
 \begin{corollary}[\cite{JPPZ}] \label{cor:JPPZExp}
The class 
 $\Omega_{g,n}^{[x]}(r,s;a_1,\dots,a_n)$ is equal to
\begin{align}  \label{eqn:JPPZExp}
\sum_{\Gamma\in \mathsf{G}_{g,n}} 
\sum_{w\in \mathsf{W}_{\Gamma,r,s}}
\frac{r^{2g-1-h^1(\Gamma)}}{|\Aut(\Gamma)| }
\;
\xi_{\Gamma *}\Bigg[ &  \prod_{v \in V(\Gamma)} e^{-\sum\limits_{m = 1} (-1)^{m-1} x^m \frac{B_{m+1}(s/r)}{m(m+1)}\kappa_m(v)} \; \times 
\\ \notag
& 
\, \, \, \prod_{i=1}^n e^{\sum\limits_{m = 1}(-1)^{m-1} x^m \frac{B_{m+1}(a_i/r)}{m(m+1)} \psi^m_{i}} \; \times
\\ \notag
& \!  \prod_{\substack{e\in E(\Gamma) \\ e = (h,h')}}
\frac{1-e^{\sum\limits_{m \geq 1} (-1)^{m-1} x^m \frac{B_{m+1}(w(h)/r)}{m(m+1)} [(\psi_h)^m-(-\psi_{h'})^m]}}{\psi_h + \psi_{h'}} \Bigg]\, .
\end{align} 
Here $\mathsf{G}_{g,n}$ is the finite set of stable graphs of genus $g$ with $n$ legs. The set $\mathsf{W}_{\Gamma,r,s}$ is the finite set of decorations of all half-edges (including the legs). The leg $i$ is decorated with $a_i$, and any other half-edge $h$ with an integer $w(h) \in \{0, \dots, r-1\}$ (a so-called ``weight'') in such a way that decorations of half-edges $h,h'$ of the same edge $e \in E(\Gamma)$, $e=(h,h')$ sum up to $r$ and locally on each vertex $v \in V(\Gamma))$ of genus $g(v)$ and index $n(v)$  the sum of all decorations is congruent to $(2g(v) - 2 + n(v))s$ modulo $r$. 
\end{corollary}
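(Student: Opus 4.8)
The plan is to obtain \eqref{eqn:JPPZExp} as a direct consequence of Chiodo's formula \eqref{eqn:Chiodo:formula}, by inserting it into the definition of $\Omega^{[x]}_{g,n}$ and reorganizing the pushforward $\epsilon_*$ of the resulting exponential as a sum over stable graphs. The first step is purely a reconciliation of coefficients: using the identity $(-1)^m(m-1)!/(m+1)! = (-1)^m/(m(m+1)) = -(-1)^{m-1}/(m(m+1))$, the coefficient $(-1)^m x^m (m-1)!$ turns the three summands of $\ch_m(r,s;\vec a)$ into precisely the $\kappa$-, $\psi$- and boundary-contributions with the signs displayed in \eqref{eqn:JPPZExp}; for instance the $\kappa_m$ term becomes $\sum_m(-1)^m x^m\frac{B_{m+1}(s/r)}{m(m+1)}\kappa_m = -\sum_m(-1)^{m-1}x^m\frac{B_{m+1}(s/r)}{m(m+1)}\kappa_m$, matching the exponent in the first line of \eqref{eqn:JPPZExp}.

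The structural heart of the argument is to split the exponent into a ``vertex part'' (the $\kappa_m$ and $\psi_i^m$ terms) and an ``edge part'' (the boundary terms $\frac{r}{2}\sum_a\frac{B_{m+1}(a/r)}{(m+1)!}j_{a,*}\frac{(\psi')^m-(-\psi'')^m}{\psi'+\psi''}$). Since the Chow ring is commutative the exponential of the sum factorizes, and since the $\kappa$- and $\psi$-classes on $\oM^{r,s}_{g;a}$ are pullbacks along $\epsilon$ of the corresponding classes on $\oM_{g,n}$, the projection formula lets the vertex factors pass through $\epsilon_*$ and distribute onto the vertices and legs, yielding the products $\prod_v e^{-\sum(\cdots)\kappa_m(v)}$ and $\prod_i e^{\sum(\cdots)\psi_i^m}$. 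Expanding $\exp(\text{edge part})$ is where the graphs appear: its degree-$k$ term corresponds to $k$ iterated boundary gluings, hence to a stable graph with $k$ edges, and because $\frac{(\psi')^m-(-\psi'')^m}{\psi'+\psi''}$ vanishes at $\psi'=-\psi''$ each half-edge contribution is divisible by $\psi_h+\psi_{h'}$, so the per-edge geometric resummation produces exactly the edge factor $\frac{1-\exp(\cdots)}{\psi_h+\psi_{h'}}$ of \eqref{eqn:JPPZExp}.

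Passing through $\epsilon_*$ then converts the data of the boundary morphisms $j_a$ into the gluing maps $\xi_\Gamma$ of $\oM_{g,n}$: the multiplicity index $a$ at a node becomes the half-edge weight $w(h)$, and summing over $a=0,\dots,r-1$ at every node, constrained so that weights on the two ends of an edge sum to $r$ and the weights at each vertex are congruent to $(2g(v)-2+n(v))s \pmod r$, reproduces the sum over $w\in\mathsf{W}_{\Gamma,r,s}$. The symmetry factor $1/|\Aut(\Gamma)|$ then arises in the standard way from the $1/k!$ in the exponential together with the orderings of the two half-edges of each edge, the latter also absorbing the factors of $\tfrac12$ attached to each $\tfrac r2$.

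The step I expect to be the main obstacle is the correct accounting of the power $r^{2g-1-h^1(\Gamma)}$, with $h^1(\Gamma)=|E(\Gamma)|-|V(\Gamma)|+1$. This requires controlling the degree of $\epsilon$ over each boundary stratum: generically $\epsilon$ is an orbifold covering of degree $r^{2g-1}$, but over the stratum with dual graph $\Gamma$ its preimage splits into components indexed by the admissible weightings $w$, each carrying an additional $\ZZ_r$ stabilizer at every node coming from the orbifold modification of $\oM_{g,n}$ described before the statement. One must show that the combination of the explicit factors of $r$ from each $\tfrac r2$ in Chiodo's boundary term, these node stabilizers, and the degree of the restricted covering $\epsilon|_\Gamma$ collapses to $r^{2g-1-h^1(\Gamma)}$; this Euler-characteristic bookkeeping of $r$-powers over the stratification is the delicate point, and is exactly the computation carried out in \cite{JPPZ}.
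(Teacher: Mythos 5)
Your outline is correct and is essentially the same route as the paper's: the paper states this corollary without proof, importing it from \cite{JPPZ}, and the derivation there is exactly what you describe — expand the exponential of Chiodo's formula, separate the $\kappa$/$\psi$ (vertex and leg) terms from the boundary terms via the projection formula, organize repeated boundary self-intersections into the edge factors using the normal bundle class $-(\psi_h+\psi_{h'})$, and track the degree of $\epsilon$ stratum by stratum. Your identification of the $r$-power bookkeeping as the delicate point is apt, and it does close as you expect: each vertex contributes covering degree $r^{2g(v)-1}$ and each node an extra $\ZZ_r$ factor, giving $r^{\sum_{v}(2g(v)-1)+|E(\Gamma)|}=r^{2g-1-h^1(\Gamma)}$ since $\sum_v g(v)=g-h^1(\Gamma)$ and $|E(\Gamma)|-|V(\Gamma)|=h^1(\Gamma)-1$.
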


\subsection{Basic properties} \label{sec:Omega-basic} This section recalls several properties of the $\Omega$-classes collected in~\cite{gln}. Fix $g,n \geq 0$ integers such that $2g - 2 + n > 0$. Let $r$ and $s$ be integers with $r$ positive. Let $a_1, \ldots, a_n$ be integers satisfying the modular constraint $a_1+a_2+\cdots+a_n \equiv (2g-2+n)s \pmod{r}$. The $\Omega$-classes satisfy the following properties:

\begin{lemma}[\cite{gln}]  \label{prop:shift-ai}We have:
	\begin{align*}
		\Ch^{[x]}_{g,n}(r,s;a_1, \dots, a_i + r, \dots, a_n)
		& =
		\Ch^{[x]}_{g,n}(r,s;a_1, \dots, a_n) \cdot\left( 1 + x\frac{a_i}{r}\psi_i\right) \\
		\Ch^{[x]}(r,0;a_1, \dots, a_n) & =
	\Ch^{[x]}(r,r;a_1, \dots, a_n) \\
	\Ch^{[x]}(r,s; a_1, \dots, 0, \dots, a_n)
	& =
	\Ch^{[x]}(r,s;a_1, \dots, r, \dots, a_n)
	\end{align*}
\end{lemma}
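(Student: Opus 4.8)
The plan is to prove the three relations separately, with the third obtained as the specialization $a_i=0$ of the first. The single computational tool behind all of them is the difference equation for Bernoulli polynomials, $B_n(t+1)-B_n(t)=n\,t^{\,n-1}$: evaluated at $t=0$ it gives $B_n(1)=B_n(0)$ for $n\ge 2$ (controlling the $s$-shift), and evaluated at general $t$ it isolates the single monomial that will resum into the claimed $\psi$-factor (controlling the $a_i$-shift). I will use the defining expression $\Omega^{[x]}_{g,n}(r,s;\vec a)=\epsilon_*\exp\bigl(\sum_{m\ge 1}(-1)^m x^m (m-1)!\,\ch_m(r,s;\vec a)\bigr)$ together with Chiodo's formula \eqref{eqn:Chiodo:formula} and its pushed-forward form in Corollary~\ref{cor:JPPZExp}.

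For the shift $a_i\mapsto a_i+r$ I would argue geometrically on the root stack. Sending an $r$-th root $\mathcal L$ of $\omega_{\log}^{\otimes s}(-\sum_j a_j p_j)$ to $\mathcal L(-D_i)$, where $D_i$ is the divisor of the $i$-th section, identifies $\oM^{r,s}_{g;a}$ with $\oM^{r,s}_{g;a+re_i}$ over $\oM_{g,n}$, because $\mathcal L(-D_i)^{\otimes r}=\omega_{\log}^{\otimes s}(-\sum_j a_j p_j-rD_i)$ is exactly the root bundle for $a+re_i$, and the forgetful map $\epsilon$ is unchanged. To compare Chern characters I would apply $R\pi_*$ to the short exact sequence $0\to\mathcal L(-D_i)\to\mathcal L\to\mathcal L|_{D_i}\to 0$, obtaining in $K$-theory $R\pi_*\mathcal L-R\pi_*\mathcal L(-D_i)=\pi_*(\mathcal L|_{D_i})=\sigma_i^*\mathcal L$. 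The standard restrictions $\sigma_i^*\omega_{\log}=\mathcal O$ and $\sigma_i^* D_i=-\psi_i$ give $c_1(\sigma_i^*\mathcal L)=\tfrac{a_i}{r}\psi_i$, so that $\ch_m(r,s;a+re_i)=\ch_m(r,s;\vec a)-\tfrac1{m!}\bigl(\tfrac{a_i}{r}\psi_i\bigr)^m$ for all $m\ge1$. Substituting into the exponent, the change is $\sum_{m\ge1}(-1)^m x^m(m-1)!\cdot\bigl(-\tfrac1{m!}(\tfrac{a_i}{r}\psi_i)^m\bigr)=\sum_{m\ge1}\tfrac{(-1)^{m-1}}{m}\bigl(x\tfrac{a_i}{r}\psi_i\bigr)^m=\log\bigl(1+x\tfrac{a_i}{r}\psi_i\bigr)$, so $\exp$ multiplies by $1+x\tfrac{a_i}{r}\psi_i$. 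Since $\psi_i=\epsilon^*\psi_i$ at a leg, the projection formula carries this factor through $\epsilon_*$, yielding the first relation. The third relation is then the case $a_i=0$, where the factor degenerates to $1$; one only checks that both endpoints meet the modular constraint, which holds since they differ by a multiple of $r$.

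For the equality of the $s=0$ and $s=r$ classes I would compare Corollary~\ref{cor:JPPZExp} term by term, since there everything already lives on $\oM_{g,n}$. The parameter $s$ enters in exactly two ways: through the coefficient $B_{m+1}(s/r)$ of $\kappa_m(v)$ in the vertex factor, and through the vertex congruence $\sum(\text{decorations at }v)\equiv(2g(v)-2+n(v))s\pmod r$ defining the admissible weight set $\mathsf W_{\Gamma,r,s}$. For $s=r$ the congruence becomes $\equiv(2g(v)-2+n(v))r\equiv 0$, identical to the $s=0$ condition, so the two expressions are sums over the same weightings with the same prefactors $r^{2g-1-h^1(\Gamma)}/|\Aut(\Gamma)|$ and the same leg and edge factors. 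Because the Chern-character sum starts at $m=1$, the only Bernoulli values appearing in the vertex factor are $B_{m+1}(s/r)$ with $m+1\ge2$, and there $B_{m+1}(1)=B_{m+1}(0)$; hence every summand coincides and the two classes are equal.

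The routine parts are the Bernoulli bookkeeping and the logarithmic resummation. The step needing genuine care is the geometric input for the first relation: setting up the isomorphism $\mathcal L\mapsto\mathcal L(-D_i)$ of root stacks compatibly with $\epsilon$, and computing the resulting change of the derived pushforward cleanly (equivalently, justifying that Chiodo's formula may legitimately be evaluated at the argument $a_i+r$ lying outside the fundamental domain $0\le a_i<r$). Once this twisting relation between $R\pi_*\mathcal L$ and $R\pi_*\mathcal L(-D_i)$ is established, everything else is formal.
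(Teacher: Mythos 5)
The paper itself contains no proof of this lemma: it is imported verbatim from~\cite{gln} (cf.\ also~\cite{lpsz}), so the only comparison available is with the standard arguments in that cited literature --- and your proof is essentially that standard argument, carried out correctly. The $a_i$-shift via the twist $\mathcal{L}\mapsto\mathcal{L}(-D_i)$ identifying the two root stacks over $\oM_{g,n}$, the $K$-theoretic identity $[R\pi_*\mathcal{L}]-[R\pi_*\mathcal{L}(-D_i)]=[\sigma_i^*\mathcal{L}]$ with $c_1(\sigma_i^*\mathcal{L})=\tfrac{a_i}{r}\psi_i$, and the logarithmic resummation producing the factor $1+x\tfrac{a_i}{r}\psi_i$ are precisely how this identity is established; your term-by-term comparison in Corollary~\ref{cor:JPPZExp} for $s=0$ versus $s=r$ (using $B_{m+1}(1)=B_{m+1}(0)$ for $m\geq 1$, which holds exactly because the Chern-character sum starts at $m=1$, together with the coincidence of the congruence conditions defining $\mathsf{W}_{\Gamma,r,s}$) is likewise the natural route; and deducing the third identity as the specialization $a_i=0$ of the first is correct, including the check that both sides satisfy the modular constraint. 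Your own caveat is the right one: the clean formula $c_1(\sigma_i^*\mathcal{L})=\tfrac{a_i}{r}\psi_i$ conceals the orbifold ($\mathbb{Z}_r$-stabilizer) structure at the $i$-th marking, where the fractional coefficient records the character of the stabilizer acting on the fiber, and it is exactly this geometric setup --- rather than a naive evaluation of Chiodo's formula at $a_i+r$ outside the fundamental domain --- that makes the first relation legitimate. A useful consistency check you could add: the $K$-theoretic correction $-\tfrac{1}{m!}\bigl(\tfrac{a_i}{r}\psi_i\bigr)^m$ you obtain agrees with what the difference equation $B_{m+1}(t+1)-B_{m+1}(t)=(m+1)t^m$ gives when applied to the $\psi_i$-term of \eqref{eqn:Chiodo:formula}, so the geometric and Bernoulli routes confirm one another.
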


\begin{lemma}[\cite{gln,lpsz}] \label{lem:pull-back-range} Assume in addition that $0\leq a_1,\dots,a_n\leq r$. Then 
	\begin{equation*}
	\Ch^{[x]}_{g,n}(r,s;a_1, \dots, a_n, s)
	=
	\pi^{\ast}\Ch^{[x]}_{g,n}(r,s;a_1, \dots, a_n),
\end{equation*}
where $\pi\colon \oM_{g,n+1}\to \oM_{g,n}$ forgets the last marked point. 
\end{lemma}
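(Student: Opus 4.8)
The plan is to reduce the claimed identity of $\Ch$-classes to a statement about the Chern characters $\ch_m$ and then exploit the special role of the primary field $a_{n+1}=s$. Recall that by definition $\Ch^{[x]}$ is built from the $\ch_m(r,s;\vec a)$ by exponentiation followed by the pushforward $\epsilon_*$ along the forgetful map from the root stack. The geometric input that makes the lemma work is that, on a fixed stable $(n+1)$-pointed curve, the bundle whose $r$-th root is taken does not see the last point: since $\omega_{\log}=\omega(\sum_i p_i)$, one computes
\[
\omega_{\log}^{\otimes s}\Bigl(-\sum_{i=1}^n a_i p_i - s\,p_{n+1}\Bigr) \;=\; \omega^{\otimes s}\Bigl(\sum_{i=1}^n (s-a_i)p_i\Bigr),
\]
which is exactly the pullback under the forgetful map of the bundle $\omega_{\log}^{\otimes s}(-\sum_{i=1}^n a_i p_i)$ attached to the $n$-pointed curve. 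Realizing $\oM_{g,n+1}$ as the universal curve over $\oM_{g,n}$, this says that the universal $r$-th root on the $(n+1)$-pointed root stack is pulled back from the one on the $n$-pointed root stack, and the two root stacks fit into a commuting square with $\pi$ and the two covering maps $\epsilon$.

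First I would make the Chern-character cancellation explicit using Chiodo's formula~\eqref{eqn:Chiodo:formula}. The only term carrying $\kappa_m$ has coefficient $B_{m+1}(s/r)/(m+1)!$, while the marked point $p_{n+1}$, now decorated by $a_{n+1}=s$, contributes $-B_{m+1}(s/r)/(m+1)!\,\psi_{n+1}^m$. Combining these with the standard comparison $\kappa_m^{(n+1)}=\pi^*\kappa_m^{(n)}+\psi_{n+1}^m$, the two $\psi_{n+1}^m$ contributions cancel and the $\kappa$-part becomes a pure pullback. The remaining $\psi_i$-terms ($i\le n$) and the boundary terms $j_{a,\ast}$ must then be matched with $\pi^*$ of their $n$-pointed counterparts; here one uses $\psi_i^{(n+1)}=\pi^*\psi_i^{(n)}+D_{i,n+1}$ and the compatibility of the boundary morphisms $j_a$ with adding a point, so that the correction divisors $D_{i,n+1}$ are absorbed by the difference of boundary contributions. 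The upshot is the identity $\ch_m(r,s;a_1,\dots,a_n,s)=\pi^*\ch_m(r,s;a_1,\dots,a_n)$, read through the commuting square.

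Granting this, I would exponentiate and push forward: since the forgetful maps of root stacks fit into the commuting square above, cohomology and base change (equivalently, an isomorphism $R^\bullet\pi'_*\mathcal L' \cong \tilde F^* R^\bullet\pi_*\mathcal L$ of derived pushforwards together with the projection formula) let me interchange the pushforward along $\epsilon$ with $\pi^*$ and conclude $\Ch^{[x]}_{g,n+1}(r,s;a_1,\dots,a_n,s)=\pi^*\Ch^{[x]}_{g,n}(r,s;a_1,\dots,a_n)$. The main obstacle, and the step requiring genuine care rather than bookkeeping, is the treatment of the boundary: when $p_{n+1}$ approaches a node or another marked point the curve bubbles, the weight decorations $w(h)$ in the expansion~\eqref{eqn:JPPZExp} must be tracked, and the stacky $\ZZ_r$-structure at the nodes affects the base-change computation. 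This is precisely where the hypothesis $0\le a_1,\dots,a_n\le r$ and the shift relations of Lemma~\ref{prop:shift-ai} enter, ensuring that the decoration $a_{n+1}=s$ on the bubble is consistent with the modular constraint and that no spurious $\psi_{n+1}$-corrections survive; verifying this compatibility on every boundary stratum is the technical heart of the argument.
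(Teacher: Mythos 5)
First, a point of comparison: the paper does not actually prove Lemma~\ref{lem:pull-back-range} --- it is imported wholesale from \cite{gln,lpsz} --- so your proposal can only be measured against the arguments in that literature, which it does mirror in outline: one either compares Chern characters through Chiodo's formula~\eqref{eqn:Chiodo:formula}, or argues geometrically that a marked point carrying the twist $s$ does not change the bundle whose $r$-th root is taken. Your two concrete observations are genuine ingredients of that argument: the identity $\omega_{\log}^{\otimes s}\bigl(-\sum_{i=1}^n a_i p_i - s\,p_{n+1}\bigr)=\omega^{\otimes s}\bigl(\sum_{i=1}^n(s-a_i)p_i\bigr)$ exhibiting the bundle as a pullback, and the cancellation of the $\psi_{n+1}^m$-term of \eqref{eqn:Chiodo:formula} against the comparison $\kappa_m=\pi^*\kappa_m+\psi_{n+1}^m$.

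The gap is that everything beyond these two observations is asserted rather than verified, and the part you defer (``the technical heart'') is not routine bookkeeping --- it \emph{is} the lemma. Two verifications are missing. (a) For $i\leq n$, from $\psi_i\cdot D_{i,n+1}=0$ one gets $\psi_i^m=(\pi^*\psi_i)^m+(\pi^*\psi_i)^{m-1}D_{i,n+1}$, where $D_{i,n+1}$ is the divisor on which $p_i,p_{n+1}$ lie on a genus-zero bubble; so the $\psi_i$-terms of \eqref{eqn:Chiodo:formula} produce corrections $-\tfrac{B_{m+1}(a_i/r)}{(m+1)!}(\pi^*\psi_i)^{m-1}D_{i,n+1}$, which must cancel against the \emph{new} boundary terms $j_{a,*}$ supported on $D_{i,n+1}$. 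There $\psi''=0$, so the edge kernel degenerates to $(\psi')^{m-1}$, and one has to check that the prefactor $r/2$, the two branch labelings (related by $B_{m+1}(1-x)=(-1)^{m+1}B_{m+1}(x)$), and the factor $1/r$ coming from the $\ZZ_r$-stabilizer at the node combine to exactly $\tfrac{B_{m+1}(w/r)}{(m+1)!}$ with $w\equiv a_i\pmod r$. This is also where the hypothesis $0\leq a_i\leq r$ really enters: not through the modular constraint, as you suggest (that constraint only sees $a_i$ mod $r$ and holds automatically), but because the weight $w\in\{0,\dots,r-1\}$ at the new node equals $a_i$ on the nose only in this range, so that $B_{m+1}(w/r)$ agrees with the coefficient $B_{m+1}(a_i/r)$ of the $\psi_i$-term (the edge case $a_i=r$ being saved by $B_{m+1}(1)=B_{m+1}(0)$ for $m\geq 1$); outside $[0,r]$ the statement is genuinely false, as Lemma~\ref{prop:shift-ai} shows. (b) The boundary terms over nodes that survive forgetting $p_{n+1}$ must be matched with $\pi^*$ of the boundary terms downstairs; since the pullback of a boundary divisor contains the stratum where $p_{n+1}$ bubbles off \emph{at} the node, and the node $\psi$-classes upstairs differ from pullbacks by corrections supported there, this matching is again a computation, not a formality. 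Your base-change route faces the same issue in different clothing: the $(n+1)$-pointed root stack is the fiber product of $\pi$ and $\epsilon$ only away from these strata, and over them one needs $Rc_*\mathcal{L}'\cong\mathcal{L}$ for the contraction $c$ of the bubble, which again uses $0\leq a_i\leq r$ (the root on the bubble has fractional degree $-a_i/r\in[-1,0]$). Until (a) and (b) are carried out, what you have is a correct plan rather than a proof.
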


\subsection{Further properties} \label{sec:Omega-further} Here we collect the properties important for the statements in Section~\ref{sec:results}. 

\begin{lemma}
\label{lem:poly_a_ct} 
Let $x$ be a formal variable, let $a_1, \dots, a_n$ be integer numbers and let $r$ to be a sufficiently large natural number. We have that the expression
$$
\Omega_{g,n}^{[x]}(r,0; a_1, \dots, a_n)\Big{|}_{\oM_{g,n}^{ct}} \in R^{*}(\oM^{ct}_{g,n})
$$
is a (non-homogeneous) polynomial in the variables $a_i$.
 \begin{proof}
Let us analyse the contributions appearing in the explicit formula \eqref{cor:JPPZExp}. The restriction to compact type implies that the class above reduces from a sum over stable graphs to a sum over stable trees. This means that the amount of terms given by the cardinality of the two sums does not depend on the $a_i$'s: the first one only depends on the topology, the second one is trivial since all decorations at the half-edges in a tree are uniquely determined by the decorations at the leaves (which are equal to the $a_i$, $i=1,\dots,n$).

The contributions of the vertices do not depend on the $a_i$, the contributions of the leaves are manifestly polynomial in the $a_i$, and so are the contributions of the separable nodes, as the weights $w(h)$ are uniquely determined as linear functions of the decorations $a_i$ on the leaves. This concludes the proof of the lemma.\end{proof}
\end{lemma}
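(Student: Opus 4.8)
The plan is to work directly from the explicit formula of Corollary~\ref{cor:JPPZExp} for the class $\Omega_{g,n}^{[x]}(r,0;a_1,\dots,a_n)$ and to track precisely how the data $a_1,\dots,a_n$ enter each factor once we restrict to compact type. First I would observe that restricting to $\oM_{g,n}^{ct}$ amounts to discarding all boundary strata carrying a non-separating node; equivalently, in the sum $\sum_{\Gamma\in\mathsf{G}_{g,n}}$ we retain only those stable graphs $\Gamma$ with $h^1(\Gamma)=0$, i.e.\ the stable \emph{trees}. This is the key structural simplification, and I would record it as the opening step: the dual graph of a compact-type curve is a tree, so $\xi_{\Gamma*}$ of any $\Gamma$ with a loop restricts to zero on $\oM_{g,n}^{ct}$.

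Next I would argue that the number of terms in both summations is independent of the $a_i$. The outer sum over $\mathsf{G}_{g,n}$ (now over trees) is a purely topological finite index set, so it clearly does not depend on the $a_i$. For the inner sum over weightings $w\in\mathsf{W}_{\Gamma,r,s}$, the crucial point is that on a tree every edge is separating, so the weight $w(h)$ on each half-edge is \emph{uniquely determined} by the congruence conditions: cutting the tree at an edge $e$ partitions the legs into two groups, and the local vertex constraints (each reading $\sum_h w(h)\equiv (2g(v)-2+n(v))s \bmod r$, here with $s=0$) force $w(h)$ to be congruent to the sum of the leg-decorations $a_i$ lying on one side, reduced mod $r$. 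Hence for a tree $|\mathsf{W}_{\Gamma,r,0}|=1$ and the single admissible weighting is a linear function of the $a_i$ mod $r$; in particular the cardinalities of both sums are constant in the $a_i$.

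I would then examine the three products in the integrand factor by factor. The vertex factors $\exp(-\sum_m (-1)^{m-1} x^m \tfrac{B_{m+1}(s/r)}{m(m+1)}\kappa_m(v))$ specialize at $s=0$ to $\exp(-\sum_m (-1)^{m-1}x^m \tfrac{B_{m+1}(0)}{m(m+1)}\kappa_m(v))$, which carries \emph{no} dependence on the $a_i$ whatsoever. The leg factors $\exp(\sum_m (-1)^{m-1} x^m \tfrac{B_{m+1}(a_i/r)}{m(m+1)}\psi_i^m)$ are manifestly polynomial in $a_i$, since each $B_{m+1}(a_i/r)$ is a polynomial in $a_i$ and the product over $m$ truncates against the nilpotent $\psi_i$. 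For the edge factors, because the weight $w(h)$ is a fixed linear combination of the $a_i$ mod $r$, each $B_{m+1}(w(h)/r)$ is a polynomial in the $a_i$; the outer factor $\tfrac{1-e^{(\cdots)}}{\psi_h+\psi_{h'}}$ is polynomial in $\psi_h,\psi_{h'}$ and hence, after the finitely many nonzero terms survive against nilpotence, polynomial in the $a_i$ as well. Collecting these observations, each summand is a product of classes depending polynomially on the $a_i$, the $\xi_{\Gamma*}$ and the prefactor $r^{2g-1}/|\Aut(\Gamma)|$ are $a_i$-independent, and the total sum has a constant (in $a_i$) index set; therefore the restricted class is a polynomial in $a_1,\dots,a_n$.

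The main subtlety I expect is not analytic but bookkeeping: one must be careful that ``$w(h)$ is determined mod $r$'' translates into a genuine polynomial dependence rather than a merely residue-class dependence. The resolution is that the Bernoulli polynomial $B_{m+1}$ is evaluated at $w(h)/r$ with $w(h)\in\{0,\dots,r-1\}$ being the fixed representative; once this representative is pinned down as a function of the $a_i$, the expression $B_{m+1}(w(h)/r)$ is polynomial in that representative, which is in turn an (affine-linear, reduced) function of the $a_i$. For fixed $r$ and ranging $a_i$ this is exactly the content we need, and since we are told $r$ may be taken sufficiently large the congruence relations stabilize. Thus the only real work is to phrase the uniqueness of the tree-weighting cleanly, after which the polynomiality of every factor is immediate from the shape of Chiodo's formula.
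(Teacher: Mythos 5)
Your proposal is correct and follows essentially the same route as the paper's own proof: restrict to compact type to reduce the sum in Corollary~\ref{cor:JPPZExp} to stable trees, observe that the weighting $w$ is then unique and linear in the $a_i$, and check polynomiality of the vertex, leg, and edge factors separately. Your closing paragraph on pinning down the representative $w(h)\in\{0,\dots,r-1\}$ for $r$ sufficiently large is in fact a more careful treatment of a point the paper glosses over with the phrase ``uniquely determined as linear functions of the decorations $a_i$,'' so nothing is missing.
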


\begin{proposition}
\label{prop:homogeneityclassicallimit} 
Let $a_1, \dots, a_n$ be positive integers and let $a=a_1+\cdots+a_n$. Then the cohomological degree $k$ expression 
$$
\left( a^{1-g} \lambda_g \Omega_{g,n}^{[a]}(a,0; -a_1, \dots, -a_n)\right)_k \in R^k(\oM_{g,n})
$$
is an homogeneous polynomial in the $a_i$ of polynomial degree equal to $k$.
\end{proposition}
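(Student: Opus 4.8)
The plan is to isolate the three sources of dependence on $a_1,\dots,a_n$ in the class $\bigl(a^{1-g}\lambda_g\,\Omega^{[a]}_{g,n}(a,0;-a_1,\dots,-a_n)\bigr)_k$ — the scalar prefactor $a^{1-g}$, the parameter $x=a$ hidden in $\Omega^{[x]}$, and the parameter $r=a$ together with the entries $-a_i$ — and to track the contribution of each to the total degree. The first and most elementary observation is the $x$-homogeneity of $\Omega^{[x]}$: since $\ch_m(r,s;\vec a)$ is independent of $x$ and lives in Chow degree $m$, the definition $\Omega^{[x]}=\epsilon_*\exp\bigl(\sum_m(-1)^mx^m(m-1)!\,\ch_m\bigr)$ shows that in each cohomological degree $j$ the class is $x^j$ times an $x$-independent class, i.e. $\bigl(\Omega^{[x]}_{g,n}(r,s;\vec a)\bigr)_j=x^j\,\bigl(\Omega^{[1]}_{g,n}(r,s;\vec a)\bigr)_j$. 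Applying this with $x=a$, $r=a$, $s=0$, entries $-a_i$, and using that $\lambda_g$ has degree $g$, the degree-$k$ component of the class in question equals $a^{k-2g+1}\,\lambda_g\,\bigl(\Omega^{[1]}_{g,n}(a,0;-\vec a)\bigr)_{k-g}$.

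Next I would reduce to compact type. Because $\lambda_g$ restricts to zero on the non-compact-type locus (on $\delta_{\mathrm{irr}}$ the Hodge bundle acquires a trivial summand, so its top Chern class vanishes), one may replace $\Omega^{[1]}_{g,n}(a,0;-\vec a)$ by its restriction to $\oM^{\ct}_{g,n}$, which is exactly the setting of Lemma~\ref{lem:poly_a_ct}. Here I would record the key combinatorial point specific to our entries: for $a_i>0$ and $r=a$, at every vertex the local condition (with $s=0$, so the decorations sum to $0$ modulo $r$) forces each edge weight to be an honest partial sum $w(h)=\sum_{i\in S}a_i\in\{1,\dots,a-1\}$, with no reduction modulo $r$. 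Choosing this partial sum as the representative makes the edge factor of Corollary~\ref{cor:JPPZExp} a single well-defined expression, polynomial in $\vec a$ for each large $r$ by Lemma~\ref{lem:poly_a_ct}, into which $r=a$ may be substituted.

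For the degree count I would use the joint scaling $(r,\vec a)\mapsto(\lambda r,\lambda\vec a)$ in the compact-type formula of Corollary~\ref{cor:JPPZExp}. Every Bernoulli value $B_{m+1}(s/r)$, $B_{m+1}(a_i/r)$, $B_{m+1}(w(h)/r)$ is invariant under this scaling, the weights $w(h)=\sum_{i\in S}a_i$ being linear in $\vec a$, while the tree prefactor $r^{2g-1-h^1(\Gamma)}=r^{2g-1}$ scales by $\lambda^{2g-1}$. Hence $\bigl(\Omega^{[1]}_{g,n}(r,0;-\vec a)|_{\oM^{\ct}_{g,n}}\bigr)_{k-g}$ is homogeneous of degree $2g-1$ in $(r,\vec a)$ jointly, so after setting $r=a$ it is homogeneous of degree $2g-1$ in $\vec a$. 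Adding the exponents from the three sources gives $(k-2g+1)+(2g-1)=k$, the claimed polynomial degree.

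The main obstacle is polynomiality. Each factor $B_{m+1}(\bullet/r)$ introduces powers of $1/r$, so at $r=a$ the compact-type class is a priori only a homogeneous \emph{rational} function of $\vec a$ with poles along $\sum_i a_i=0$, and individual stable trees (already the one-vertex tree, whose leg factor alone contributes $B_{m+1}(-a_i/a)$) do carry genuine poles. I would show these cancel after multiplication by $a^{k-2g+1}\lambda_g$: writing the homogeneous expression as $\sum_d r^{2g-1-d}R_{k-g,d}(\vec a)$ with $R_{k-g,d}$ homogeneous of degree $d$, polynomiality of the final class is equivalent to the divisibility of $\lambda_g R_{k-g,d}$ by $(\sum_i a_i)^{\,d-k}$ for every $d>k$. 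Establishing this divisibility — equivalently, that the pole-carrying contributions of the various trees cancel in the sum, as one checks directly in the first nontrivial case $(g,n)=(0,4)$, where the quadratic-in-$\vec a$ polar terms of the leg and boundary contributions exactly cancel — is the crux. I expect to deduce it from Lemma~\ref{lem:poly_a_ct} (polynomiality in $\vec a$ valid for all $r\ge a$, where, as noted above, the weight data is identical to that at $r=a$) together with the joint homogeneity, which pin down the Laurent-in-$r$ coefficients tightly enough to force the required cancellation.
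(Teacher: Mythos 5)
Your proposal follows the paper's own proof essentially step by step up to the crux: the $x$-homogeneity giving $(\Omega^{[x]})_j = x^j(\Omega^{[1]})_j$, the reduction to compact type via the vanishing of $\lambda_g$ off $\oM^{\ct}_{g,n}$ combined with Lemma~\ref{lem:poly_a_ct}, and the joint homogeneity of degree $2g-1$ in $(r,\vec a)$ leading to the count $(k-2g+1)+(2g-1)=k$ are all exactly the paper's steps. But at the one point where real input is needed --- polynomiality in $a$ after the substitution $r=a=\sum_i a_i$ --- you offer only an expectation, namely that Lemma~\ref{lem:poly_a_ct} together with joint homogeneity ``pin down the Laurent-in-$r$ coefficients tightly enough to force the required cancellation.'' This cannot work as stated: the two properties you invoke are purely formal and are satisfied by functions for which the conclusion fails. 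For instance $F(r;a_1,a_2)=r^{-1}(a_1^2+a_2^2)$ is polynomial in $(a_1,a_2)$ for every fixed $r$ and jointly homogeneous of degree $1$, yet $F|_{r=a_1+a_2}$ is not a polynomial. So no amount of bookkeeping with these two facts alone can establish the divisibility $(\sum_i a_i)^{d-k}\mid \lambda_g R_{k-g,d}$ that you correctly identify as the crux; genuine geometric information about the $\Omega$-classes is required. (As a side remark, your reduction of polynomiality to that term-by-term divisibility is not an equivalence either, since polar parts attached to different Laurent coefficients could in principle cancel against each other; but this is minor.)

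The paper closes the gap with two ingredients absent from your proposal. First, the shift relation of Lemma~\ref{prop:shift-ai}, applied to the first primary field, rewrites the class as $\Omega^{[a]}_{g,n}(a,0;a-a_1,-a_2,\dots,-a_n)\cdot(1-a_1\psi_1)^{-1}$, and the shifted primary fields now sum to zero. Second, for primary fields summing to zero one can invoke the theorem of Fan--Wu--You \cite{Fan21}: $r^{d-g+1}\left(\Omega_{g,n}(r,0;b_1,\dots,b_n)\right)_d$ is a polynomial in $r$ for all sufficiently large $r$ (large enough that no subset of the $b_i$ sums to $r$, which holds for $r=\sum_i a_i$ because $a_1>0$). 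Substituting $r=\sum_i a_i$ into an honest polynomial in $r$ is then harmless, and the degree count you already made finishes the proof; the paper also notes the alternative of deducing this from \cite{CJ} by applying the shift twice, and treats $n=0$ separately, where all edge weights vanish and polynomiality in $a$ is manifest. Without Fan--Wu--You or an equivalent substitute, your argument does not go through.
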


\begin{proof} Consider the class $a^{1-g}\lambda_g \Omega_{g,n}^{[x]}(a,0; -a_1, \dots, -a_n)$ with so far no relations between $x, a$, and $a_1,\dots, a_n$.
Let us first notice that the expression is trivially polynomial in the variable $x$ keeping track of the cohomological degree of the $\Omega$-class, hence setting $x$ to $a$ only provides further polynomial dependency in $a$. Recall that the class $\lambda_g$ vanishes on the complement of the compact type, hence by Lemma \ref{lem:poly_a_ct} the dependency on the individual $a_i$'s is polynomial. Furthermore, let us notice that from the analysis of \eqref{eqn:JPPZExp} and the proof of Lemma~\ref{lem:poly_a_ct} $\Omega_{g,n}^{[a]}(a,0; -a_1, \dots, -a_n)$ restricted to compact type is a Laurent polynomial in $a$. This expression as a whole has homogeneous degree $2g - 1 + d$ in $a,a_1,\dots,a_n$ for the cohomological degree $d$ part of $\Omega_{g,n}^{[a]}(a,0; -a_1, \dots, -a_n)$. After multiplication with $a^{1-g}\lambda_g$, the degree $k$ of the whole expression $a^{1-g}\lambda_g \Omega_{g,n}^{[a]}(a,0; -a_1, \dots, -a_n)$ is hence a homogeneous Laurent polynomial in $a$ and $a_i$'s of polynomial degree $2g - 1 + (k - g) + (1 - g) = k$ (more precisely, it is a Laurent polynomial in $a$ and an ordinary polynomial in $a_1,\dots,a_n$).

In the case $n=0$ all weights $w(h)=0$ and we see that the resulting expression for $a^{1-g}\lambda_g \Omega_{g,n}^{[a]}(a,0;)$ is manifestly a polynomial in $a$ of the prescribed degree. So we assume from now one that $n\geq 1$.  In this case we are left with establishing the polynomial dependency of the expression on the variable $a$ once we further restrict $a = \sum_{i=1}^n a_i$. For this purpose we are going to employ a result in \cite{Fan21}, which states that 
$$
r^{d-g+1} \left(\Omega_{g,n}(r,0;b_1, \dots, b_n)\right)_d
$$
is a polynomial in $r$ for sufficiently large $r$ whenever $b_1 + \dots + b_n = 0$. Sufficiently large in this case means that no subset of the $b_i$ can sum up to $r$ (this is the condition that guarantees the second sum in \eqref{eqn:JPPZExp} to be independent on $r$). In order to apply this result, we can apply Lemma~\ref{prop:shift-ai} to the first primary field $a_1$. In this way we can restate the class as
\begin{align*}
& a^{1-g} \left( \Omega_{g,n}^{[a]}(a,0; -a_1, \dots, -a_n)\right)_k =  \left( \frac{ a^{1-g} \Omega_{g,n}^{[a]}(a,0; a-a_1, a_2, \dots, -a_n)}{(1 - a_1\psi_1 )} \right)_k
\\
&= \sum_{d=0}^k 
 \bigg(r^{d-g+1} \left(\Omega_{g,n}(r,0; \textstyle \sum_{j=2}^n a_j, \dots, -a_n)\right)_{d} \bigg)\bigg|_{r=\sum_{i=1}^n a_i} \left(\frac{1}{1 - a_1 \psi_1 }\right)_{k-d}
\end{align*}
The second factor of each summand in the latter sum is manifestly polynomial in $a_1$, and the first factor is polynomial in $r$ by \cite{Fan21}. Note that $r=\sum_{i=1}^n a_i$ is here indeed sufficiently large as it cannot interact with subsets of the primary fields. This concludes the proof of the proposition for $n\ge 1$.
\end{proof}

\begin{remark} As an alternative argument, one can reduce the polynomiality of 	the class $a^{1-g}  \Omega_{g,n}^{[a]}(a,0; -a_1, \dots, -a_n)$ for $n\geq 1$ to a vanishing statement in~\cite{CJ} via the same trick with the shift of the first primary field applied twice .
\end{remark}

Another statement that we need is the following:

\begin{proposition} \label{prop:pull-back} For any $m\geq 1$, 
	\begin{align*}
		a^{1-g}\lambda_{g} \Omega^{[a]}_{g,n+m}\big(a,0; -a_1,\dots,-a_n,\underbrace{0,\ldots,0}_{m}\big)
	= \frac{a^{1-g}\lambda_{g} \pi_m^* \Omega^{[a]}_{g,n}(a,0; a-a_1,\dots,a-a_n)}{\prod_{i=1}^n (1-a_i\psi_i)},
	\end{align*}
	where $\pi_m\colon \oM_{g,n+m} \to \oM_{g,n}$ is the map forgetting the last $m$ marked points.
\end{proposition}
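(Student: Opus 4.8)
The plan is to derive the formula as a two-step chain built from the shift relation of Lemma~\ref{prop:shift-ai} and the pullback relation of Lemma~\ref{lem:pull-back-range}, with no further input. Throughout I fix positive integers $a_1,\dots,a_n$, set $r=a=a_1+\cdots+a_n$, and keep $x$ as a formal variable, specializing to $x=a$ only at the end. Since both sides of the asserted equality have coefficients that are polynomials in $a_1,\dots,a_n$ (the left-hand side by Proposition~\ref{prop:homogeneityclassicallimit} together with Lemma~\ref{lem:poly_a_ct}, the right-hand side because $\pi_m^*$ and the expansion of the denominators preserve polynomiality), it suffices to prove the identity for every positive integer tuple, which is exactly the regime where the two lemmas apply with $r=a$ a genuine positive integer. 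I run the argument for $n\ge 1$; for $n=0$ one has $a=0$ and the statement is degenerate and treated separately.

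First I would move the negative primary fields into the window $[0,r]$ required by the pullback lemma. Applying Lemma~\ref{prop:shift-ai} to the $i$-th field, read with its base value equal to $-a_i$ so that the shifted value is $-a_i+r=a-a_i$, gives
\[
\Omega^{[x]}_{g,n+m}\big(a,0;\dots,a-a_i,\dots\big)=\Omega^{[x]}_{g,n+m}\big(a,0;\dots,-a_i,\dots\big)\Big(1-x\tfrac{a_i}{a}\psi_i\Big).
\]
Iterating over $i=1,\dots,n$ and collecting the factors produces
\[
\Omega^{[x]}_{g,n+m}\big(a,0;-a_1,\dots,-a_n,\underbrace{0,\ldots,0}_{m}\big)=\frac{\Omega^{[x]}_{g,n+m}\big(a,0;a-a_1,\dots,a-a_n,\underbrace{0,\ldots,0}_{m}\big)}{\prod_{i=1}^n\big(1-x\tfrac{a_i}{a}\psi_i\big)}.
\]
Here I would also record that every intermediate configuration meets the modular constraint (each field-sum is a multiple of $r=a$ because $s=0$) and that the shifted fields obey $0\le a-a_i\le a=r$, the inequality $a-a_i\ge 0$ using $a_i\le\sum_j a_j$ and $a-a_i\le a$ using $a_i\ge 0$.

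Next I would remove the $m$ frozen zeros with Lemma~\ref{lem:pull-back-range}. Since $s=0$, each trailing zero equals $s$, so one application of the lemma strips the last marked point; iterating $m$ times and using functoriality of pullback identifies
\[
\Omega^{[x]}_{g,n+m}\big(a,0;a-a_1,\dots,a-a_n,\underbrace{0,\ldots,0}_{m}\big)=\pi_m^*\,\Omega^{[x]}_{g,n}\big(a,0;a-a_1,\dots,a-a_n\big).
\]
The range hypothesis verified above persists at each of the $m$ steps, since the fields present are always the $a-a_i$ together with some zeros, all lying in $[0,r]$. Substituting $x=a$, so that $x\tfrac{a_i}{a}=a_i$, and multiplying both sides of the combined identity by the scalar class $a^{1-g}\lambda_g$ yields precisely the claimed formula.

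The derivation is short, so the steps I would write out in full are the two bookkeeping points on which correctness hinges: getting the sign and the specialization $x\tfrac{a_i}{a}=a_i$ right in the shift step, and checking that the range condition $0\le a-a_i\le r$ of Lemma~\ref{lem:pull-back-range} holds and is preserved through the $m$-fold iteration. I expect no genuine obstacle; in particular $\lambda_g$ plays no role in the derivation itself, as the two displayed identities already hold for the full $\Omega$-classes, and $a^{1-g}\lambda_g$ is merely carried along.
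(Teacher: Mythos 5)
Your proof is correct and follows exactly the paper's own route: apply Lemma~\ref{prop:shift-ai} to each field $-a_i$ to produce the factor $\prod_{i=1}^n(1-a_i\psi_i)^{-1}$ (after setting $x=a$), then strip the $m$ trailing zeros by $m$ applications of Lemma~\ref{lem:pull-back-range}, using $s=0$. The only difference is that you spell out the bookkeeping (modular constraint, the range check $0\le a-a_i\le a$, and the reduction to positive integer tuples) that the paper leaves implicit.
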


\begin{proof} First, we apply Lemma~\ref{prop:shift-ai}:
\begin{align*}
		& a^{1-g}\lambda_{g} \Omega^{[a]}_{g,n+m}\big(a,0; -a_1,\dots,-a_n,\underbrace{0,\ldots,0}_{m}\big) \\
		& = a^{1-g}\lambda_{g}  \Omega^{[a]}_{g,n}\big(a,0; a-a_1,\dots,a-a_n,\underbrace{0,\ldots,0}_{m}\big) \frac 1{\prod_{i=1}^n (1-a_i\psi_i)}. 
\end{align*}
This brings us to the range where we can apply Lemma~\ref{lem:pull-back-range} $m$ times to the first factor, which completes the proof.
\end{proof}

With Proposition~\ref{prop:pull-back} at hand, we are able to prove the push-forward statements.

\begin{proof}[Proofs of Theorems~\ref{thm:push-forward} and~\ref{thm:push-forward-A}] Note that for $\big({\prod_{i=1}^n (1-a_i\psi_i)}\big)^{-1} \in R^*(\oM_{g,n+1})\otimes_{\QQ} Q$ and $\pi\colon \oM_{g,n+1}\to \oM_{g,n}$ we have 
\begin{align} \label{eq:push-forward-psi}
	\pi_* \frac 1{\prod_{i=1}^n (1-a_i\psi_i)} = \frac {\sum_{i=1}^n a_i}{\prod_{i=1}^n (1-a_i\psi_i)} \in R^*(\oM_{g,n})\otimes_{\QQ} Q.
\end{align}

In the setup of Theorem~\ref{thm:push-forward} for $m\geq 3$ and $m=1$ the push-forward that forgets the last marked point (corresponding to the last frozen leg in the stable rooted trees) does not change the structure of the graph, or, in the case of the class ${}^{lvl}\Omega_{g,n}^m$, the level structure and implied coefficients $C^{lvl}$. It only acts on the class $\Omega(v_r)$ assigned to the root vertex and, in the degree-labeled case, decreases the value of $p(v_r)$ by $1$. At the root vertex we can combine Equation~\eqref{eq:push-forward-psi} and Proposition~\ref{prop:pull-back} via the projection formula to see that the decoration of the root vertex in the resulting graph is exactly the expected one, with an extra factor of $\sum_{i=1}^n a_i$.

The case of $m=2$ is a bit special, since under the push-forward we have to take into account the graphs with the root vertex $v_r$ of genus $0$, with $|H_+(v_r)|=1$. This root vertex is decorated by $\Omega(v_r) = 1$ and has to be contracted under the pushforward. The new root vertex $v_r'$ is then then one that was a direct descendant of the root vertex before the push-forward. We gain an extra factor $\sum_{i=1}^n a_i$ for the edge connecting $v_r$ and $v_r'$. Under the pushforward we also gain an extra sign $(-1)$, either as an extra sign for the edge connecting $v_r$ and $v_r'$ in the case of $\Ch^2_{g,n}$, or as a consequence of the fact that with removing $v_r$ we suppress exactly one level in the computation of $C_{lvl}(T,p)$ for ${}^{lvl}\Ch^2_{g,n}$.

The graphs with the root vertex $v_r$ of genus $0$, with $|H_+(v_r)|=1$ are in one-to-one correspondence with the graphs where $2g(v_r)-2+|H_+(v_r)|>0$  both in the degree-labeled and the non-degree-labeled cases. The correspondence is given by contracting the edge connecting $v_r$ and $v_r'$ (in the leveled case note that $p(v_r)=0$ one $m=2$ and $g(v_r)=0$). The pairs of graphs such correspondence have identical push-forwards but with opposite signs, thus the resulting class is equal to zero.
\end{proof}

\subsection{Genus 0} In this Section we analyze the $\Ch$-class in genus $0$ and prove Theorems~\ref{thm:genus0-mgeq2} and~\ref{thm:genus0-m1}. 

\begin{lemma}
\label{lem:g0vanishing}
For genus zero, we have 
$$
\left(a^{1} \Omega_{0,n}^{[a]}(a, 0; a - a_1, \dots, a - a_n)\right)_k=
\begin{cases}
0 \quad \text{ if } k > 0, 
\\
1 \quad \text{ if } k = 0.
\end{cases}
$$
\end{lemma}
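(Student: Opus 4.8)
The plan is to prove the statement by induction on $n$ (the base case being $n=3$, where $\oM_{0,3}$ is a point and there is nothing to check beyond degree $0$), treating $k=0$ and $k\geq 1$ separately. The degree-zero assertion is essentially immediate from the explicit formula of Corollary~\ref{cor:JPPZExp}: in the expansion \eqref{eqn:JPPZExp} of $\Omega^{[a]}_{0,n}(a,0;a-a_1,\dots,a-a_n)$ every summand indexed by a stable graph $\Gamma$ with at least one edge is pushed forward by $\xi_{\Gamma\ast}$ from a boundary stratum of codimension $|E(\Gamma)|\geq 1$, hence contributes only in positive degree. Thus the degree-zero part comes solely from the one-vertex graph (for which $h^1(\Gamma)=0$, $|\Aut(\Gamma)|=1$, and the weight decoration is unique), whose prefactor is $r^{2g-1-h^1(\Gamma)}=a^{-1}$ and whose bracket contributes $1$ in degree $0$. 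Multiplying by $a$ gives $1$, as claimed.

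For $k\geq 1$ I would set $P=P(a_1,\dots,a_n):=\big(a\,\Omega^{[a]}_{0,n}(a,0;a-a_1,\dots,a-a_n)\big)_k\in R^k(\oM_{0,n})\otimes_{\QQ}Q$. By the first identity of Lemma~\ref{prop:shift-ai} one has $\Omega^{[a]}_{0,n}(a,0;a-a_1,\dots,a-a_n)=\Omega^{[a]}_{0,n}(a,0;-a_1,\dots,-a_n)\prod_{i=1}^n(1-a_i\psi_i)$, so combining Proposition~\ref{prop:homogeneityclassicallimit} (with $g=0$, $\lambda_0=1$) with the homogeneity of $\prod_i(1-a_i\psi_i)$ shows that $P$ is a homogeneous polynomial of degree $k$ in $a_1,\dots,a_n$ with coefficients in $R^k(\oM_{0,n})$. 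The heart of the argument is then to show that $P$ vanishes after setting any single $a_i=0$. By the $S_n$-equivariance of the construction (permuting the marked points of $\oM_{0,n}$ permutes the arguments $a-a_i$ accordingly) it suffices to treat $a_n=0$.

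When $a_n=0$ one has $a=\sum_{i=1}^{n-1}a_i$, and the last argument becomes $a-a_n=a\equiv 0\pmod a$. By the third identity of Lemma~\ref{prop:shift-ai} this argument may be replaced by $0=s$, and then Lemma~\ref{lem:pull-back-range} applies, since $0\leq a-a_i\leq a=r$ for $i<n$; it yields
\[
\Omega^{[a]}_{0,n}(a,0;a-a_1,\dots,a-a_{n-1},a)=\pi^\ast\,\Omega^{[a]}_{0,n-1}(a,0;a-a_1,\dots,a-a_{n-1}),
\]
where $\pi$ forgets the last marked point. Taking degree-$k$ parts and applying the inductive hypothesis for $n-1$ (where now $a=\sum_{i<n}a_i$) gives $P|_{a_n=0}=\pi^\ast\big(a\,\Omega^{[a]}_{0,n-1}(a,0;\dots)\big)_k=\pi^\ast\,\delta_{k,0}=0$ for $k\geq 1$. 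By equivariance $P|_{a_i=0}=0$ for every $i$, so each variable $a_i$ divides $P$, whence $\prod_{i=1}^n a_i$ divides the homogeneous polynomial $P$ of degree $k$. Since $R^k(\oM_{0,n})=0$ for $k>n-3$, only the range $1\leq k\leq n-3<n$ is relevant, and a nonzero homogeneous polynomial of degree $k<n$ cannot be divisible by $\prod_{i=1}^n a_i$; therefore $P=0$, completing the induction.

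The step I expect to be most delicate is the reduction $a_n=0$: one must check that this specialization keeps all hypotheses intact --- namely that $r=a$ remains the sum of the surviving labels $a_1,\dots,a_{n-1}$ (so that the $(n-1)$-point class is again of the form covered by the inductive hypothesis) and that $0\leq a-a_i\leq r$ throughout, so that Lemmas~\ref{prop:shift-ai} and~\ref{lem:pull-back-range} genuinely apply. The other essential input is the homogeneity from Proposition~\ref{prop:homogeneityclassicallimit}: it is precisely the degree control that upgrades vanishing on the coordinate hyperplanes $\{a_i=0\}$ to divisibility by $\prod_i a_i$ and then, via the dimension bound $\dim\oM_{0,n}=n-3$, to outright vanishing; mere vanishing on those hyperplanes would not suffice on its own.
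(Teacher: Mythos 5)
Your overall architecture --- induction on $n$, vanishing of the degree-$k$ part on each hyperplane $\{a_i=0\}$, hence divisibility by $\prod_{i=1}^n a_i$, killed by the dimension bound $k\leq n-3$ --- is a genuinely different route from the paper. The paper's proof is much more direct: after using Lemma~\ref{prop:shift-ai} to replace $s=0$ by $s=a$, it invokes \cite[Proposition 4.4]{JKV1}, which says that in genus zero with all primary fields positive the class is (up to normalization) the total Chern class of an \emph{honest} vector bundle (no $h^0$), and a one-line Riemann--Roch computation shows this bundle has rank $h^1=0$; hence the class is $1$ and all positive-degree parts vanish at once. Your $k=0$ computation agrees with the paper's. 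However, your induction has a genuine gap at its central step.

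The gap is the identification $P|_{a_n=0}=\bigl(a\,\Omega^{[a]}_{0,n}(a,0;a-a_1,\dots,a-a_{n-1},a)\bigr)_k$. The polynomial $P$ furnished by Proposition~\ref{prop:homogeneityclassicallimit} is only known to represent the $\Omega$-class on the open cone where all $a_i$ are \emph{positive} integers (that is the hypothesis of the proposition). $\Omega$-classes are not globally polynomial in their primary fields, only chamber-wise so: Lemma~\ref{prop:shift-ai} itself shows that shifting an argument by $r$ multiplies the class by $1+x\tfrac{a_i}{r}\psi_i$. The locus $a_n=0$ is exactly a chamber wall: there the $n$-th argument $a-a_n$ hits $r$, and in the expansion \eqref{eqn:JPPZExp} the weight on any edge separating off the leg $n$ jumps from the linear expression $a-a_n$ to the residue $0$. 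So the boundary value of the chamber polynomial $P$ and the actual class at $a_n=0$ are a priori two different objects; it is the former you need to vanish, while it is the latter that you compute via Lemma~\ref{lem:pull-back-range} and the inductive hypothesis. The identification does hold --- in \eqref{eqn:JPPZExp} the weights enter only through $B_{m+1}(w/r)$, and $B_{m+1}(1)=B_{m+1}(0)$ for all $m\geq 1$, so the chamber polynomial extends continuously across this wall --- but this wall-crossing continuity is precisely the point that must be proved, and without it the induction does not close. (Your closing remark flags the range checks $0\leq a-a_i\leq r$ as the delicate point; those are fine, and the real issue is the polynomial-versus-class agreement at the wall.)
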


 \begin{proof}
In cohomological degree zero and any genus, the $\Omega$-class is well known to be equal to the fundamental class times $a^{2g-1}$, hence for $k=0$ the statement is trivial. In fact the multiplication by powers of $a$ in front of the $\Omega$-class serve the purpose of normalisation. 
For positive cohomological degree, by Lemma~\ref{prop:shift-ai} we have
\begin{equation}\label{eq:swap_s}
\Omega_{0,n}(a,0; a-a_1, \dots, a -a_n) = \Omega_{0,n}(a,a; a - a_1, \dots,a - a_n).
\end{equation}
Moreover, by \cite[Proposition 4.4]{JKV1},  
\begin{equation*}
\Omega_{0,n}(r,0; b_1, \dots, b_n), \qquad \qquad b_i > 0
\end{equation*}
is the properly normalized by a multiplicative constant push-forward from the space of $r$-spin structures of an actual total Chern class of a vector bundle (as opposed to a virtual bundle). This occurs because the universal $r$-th root does not have any global section as long as genus zero is considered and all primary fields are positive. 
All $b_i = a - a_i$ are positive as each $a_i >0$ and as for stability we need $n \geq 3$. We can then perform the Riemann-Roch computation of the rank (and we do so on the RHS of \ref{eq:swap_s}, that is, with $s=a$), which in general leads to:
\begin{equation*}
\frac{(2g - 2 + n)s - \sum_i b_i}{r} - g + 1 = h^0 - h^1
\end{equation*}
Imposing $g=0$, $h^0 = 0$, and substituting $r=s=a$ and $a_i = a - a_i$ we find that the rank of the corresponding vector bundle is equal to:
\begin{equation*}
h^1 = \frac{\sum_i (a - a_i) - (n-2)a }{a} - 1 = 0.
\end{equation*}
This concludes the proof of the lemma.
\end{proof}

As an immediate corollary of this Lemma and Proposition~\ref{prop:pull-back}, we have:

\begin{corollary} For any vertex $v$ in a stable rooted tree such that $g(v)=0$, we have $\Omega(v)=\Psi(v)$. In particular, $\Omega^{m}_{0,n} = \Psi^m_{0,n}$, $m\geq 2$, and ${}^{lvl}\Omega^{m}_{0,n} = {}^{lvl}\Psi^m_{0,n}$, $m\geq 0$. 
\end{corollary}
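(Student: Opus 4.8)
The plan is to prove the pointwise statement "$\Omega(v)=\Psi(v)$ whenever $g(v)=0$" and then observe that the global statements $\Omega^m_{0,n}=\Psi^m_{0,n}$ and ${}^{lvl}\Omega^m_{0,n}={}^{lvl}\Psi^m_{0,n}$ follow immediately, since every vertex of every tree contributing in genus $0$ is itself of genus $0$, and the tree-level coefficients (the factors $a(e)$, the signs $(-1)^{|E(T)|}$, and in the leveled case the coefficients $C_{lvl}(T,p)$) are defined identically in the $\Omega$- and $\Psi$-constructions. Thus once the vertex decorations agree, the entire boundary pushforwards agree term by term, and the sums over $\SRT(0,n,m)$ (resp.\ $\DLSRT(0,n,m)$) coincide.

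First I would specialize Proposition~\ref{prop:pull-back} to $g=0$. Setting $g(v)=0$ there, the factor $a^{1-g}\lambda_g$ becomes $a\cdot\lambda_0 = a\cdot 1 = a$, so the proposition reads
\[
a\,\Omega^{[a]}_{0,n+m}\big(a,0;-a_1,\dots,-a_n,\underbrace{0,\ldots,0}_{m}\big)
= \frac{a\,\pi_m^*\,\Omega^{[a]}_{0,n}(a,0;a-a_1,\dots,a-a_n)}{\prod_{i=1}^n(1-a_i\psi_i)}.
\]
Here I am using the notation of $\Omega(v)$, where the positive half-edges carry decorations $-a(h_i)$ with $a(h_i)=\sum_{l\in DL(h_i)}a(l)$, and the negative half-edges (the last $|H_-(v)|$ marked points) carry the decoration $0$; the $a$ appearing as $r=s$-type argument and as the $[a]$ exponent is $a(v)=\sum_{l\in DL(v)}a(l)$. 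The key input is then Lemma~\ref{lem:g0vanishing}, which gives
\[
\big(a\,\Omega^{[a]}_{0,n}(a,0;a-a_1,\dots,a-a_n)\big)_k=\delta_{k,0},
\]
so that the numerator factor $a\,\pi_m^*\Omega^{[a]}_{0,n}(a,0;a-a_1,\dots,a-a_n)$ collapses to $\pi_m^*(1)=1$ (the pullback of the fundamental class).

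Substituting this into the specialized Proposition~\ref{prop:pull-back} leaves exactly
\[
\Omega(v)=a(v)^{1-g(v)}\lambda_{g(v)}\,\Omega^{[a(v)]}_{0,|H(v)|}\big(a(v),0;-a(h_1),\dots,-a(h_{|H_+(v)|}),0,\ldots,0\big)
=\frac{1}{\prod_{i=1}^{|H_+(v)|}(1-a(h_i)\psi_i)},
\]
which is precisely $\Psi(v)$ as defined in Section~\ref{sec:Relation-toBS}. (The same computation handles the homogeneous-component form $(\Omega(v))_{p(v)}=(\Psi(v))_{p(v)}$ used in the leveled construction, since taking degree-$p(v)$ parts commutes with the identity of classes.) I do not expect a genuine obstacle here: the argument is a direct chaining of two already-established results, and the only point requiring minor care is the bookkeeping of the arguments of the $\Omega$-class at a vertex --- matching the shifted primary fields $a-a_i$ of Proposition~\ref{prop:pull-back} and Lemma~\ref{lem:g0vanishing} against the vertex decorations $-a(h_i)$ and the placement of the $m$ (resp.\ $|H_-(v)|$) trailing zero entries. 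Once that identification is transcribed correctly, the corollary is immediate.
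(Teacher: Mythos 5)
Your proposal is correct and takes essentially the same route as the paper, which presents this corollary as an immediate consequence of Lemma~\ref{lem:g0vanishing} combined with Proposition~\ref{prop:pull-back} --- precisely the chaining you carry out, with the same bookkeeping of the shifted primary fields $a-a_i$ against the vertex decorations $-a(h_i)$. The only point you (and the paper) gloss over is the root vertex of a tree with $m=0$ frozen legs, where $|H_-(v_r)|=0$ so Proposition~\ref{prop:pull-back} does not literally apply; there one instead applies Lemma~\ref{prop:shift-ai} directly and then Lemma~\ref{lem:g0vanishing}, which is the same computation with no trailing zero entries and no forgetful pullback.
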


This corollary means that Theorems~\ref{thm:genus0-mgeq2},~\ref{thm:genus0-m1}, and~\ref{thm:g0-m0} are equivalent to the statements proved in~\cite[Theorem 2.3]{BS22}, cf. Theorem~2.3 in \emph{op.~cit.}

A straightforward generalisation of the argument above via Riemann-Roch gives the following proposition, which we do not need but nevertheless report on for completeness.

\begin{proposition}
\label{prop:RRs0}
Let $a$ be a positive integer and let $b_1, \dots, b_n$ be integers such that their sum is divisible by $a$ and let $B$ be the quotient. Then we have 
\begin{align*}
\left(\Omega_{g,n}(a, 0; b_1, \dots, b_n)\right)_k &= 0 \text{ whenever } k \geq g + B \quad & \text{ if } B > 0
. 
\end{align*}
\end{proposition}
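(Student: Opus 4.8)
The plan is to mimic the genus-zero argument of Lemma~\ref{lem:g0vanishing}, but now carry the Euler-characteristic bookkeeping through to arbitrary genus and extract a vanishing range rather than exact vanishing. First I would recall that by the first relation of Lemma~\ref{prop:shift-ai} one has $\Omega_{g,n}(a,0;b_1,\dots,b_n)=\Omega_{g,n}(a,a;b_1,\dots,b_n)$ (the shift $s\mapsto s+r=s+a$ is harmless here since we only need the $s=0$ case), so that we may work with $s=a$ and think of the class as (a normalized pushforward of) the total Chern class of the derived pushforward $R^\bullet\pi_*\mathcal L$ of the universal $a$-th root. The key point, exactly as in genus zero, is to invoke \cite[Proposition 4.4]{JKV1}: when the relevant bundle has no $h^0$, the class $\Omega_{g,n}(a,0;b_1,\dots,b_n)$ is the total Chern class of an \emph{actual} vector bundle of a definite rank, hence its homogeneous components vanish above that rank.

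The main computation would be the Riemann--Roch rank count. With $r=s=a$ and primary fields $b_1,\dots,b_n$, the virtual rank of $R^\bullet\pi_*\mathcal L$ is
\[
h^0-h^1=\frac{(2g-2+n)s-\sum_i b_i}{r}-g+1=\frac{(2g-2+n)a-\sum_i b_i}{a}-g+1.
\]
Writing $\sum_i b_i=aB$, this simplifies to $(2g-2+n)-B-g+1=g-1+n-B$. Under the hypothesis $B>0$ (together with the positivity/largeness needed to guarantee $h^0=0$, which is where the genus-zero argument used $b_i>0$ and which in positive genus I would justify via the same no-global-sections reasoning), the rank of the honest bundle equals $-(h^0-h^1)=h^1$, and the total Chern class lives in degrees up to $h^1$. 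The claimed bound $k\geq g+B$ would then follow once I reconcile the rank expression with the degree threshold; so the real content is establishing the precise value of the rank and the vanishing of $h^0$ in positive genus.

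The step I expect to be the main obstacle is precisely the vanishing of $h^0$ for $g\geq 1$. In genus zero this was immediate because the universal $r$-th root of a line bundle of sufficiently negative degree has no global sections on a genus-zero curve; in higher genus one must argue that, under the divisibility and sign conditions on the $b_i$ (encoded by $B>0$), the twisted line bundle $\mathcal L$ still has no sections generically, so that $R^\bullet\pi_*\mathcal L$ is concentrated in degree one and is represented by a genuine vector bundle rather than a virtual one. I would handle this by a degree/stability analysis on the universal curve, controlling the relevant line-bundle degree via the relation $\deg\mathcal L=\tfrac1a\bigl((2g-2+n)s-\sum_i b_i\bigr)$ and showing it is negative enough under $B>0$. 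Once the bundle is honest, the vanishing of Chern classes above the rank is automatic, and the proposition follows; the remaining reconciliation of $g-1+n-B$ with the stated threshold $g+B$ is routine index bookkeeping that I would not grind through here.
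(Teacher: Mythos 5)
Your overall strategy --- realize the Chiodo class as the total Chern class of an honest vector bundle via \cite[Proposition 4.4]{JKV1} and compute its rank by Riemann--Roch --- is exactly the generalisation of Lemma~\ref{lem:g0vanishing} that the paper intends (Proposition~\ref{prop:RRs0} is stated there with only a one-sentence justification to this effect). However, your execution has a genuine error that cannot be fixed by ``routine index bookkeeping'': the swap from $s=0$ to $s=a$ (incidentally, this is the \emph{second}, not the first, identity of Lemma~\ref{prop:shift-ai}) is precisely what you must \emph{not} do before counting the rank. At $s=a$ the universal root has fibrewise degree $2g-2+n-B$, which is nonnegative whenever $B\leq 2g-2+n$, so in that whole range $h^0\neq 0$ and the class is not the Chern class of an honest bundle in this incarnation; consistently, your own formulas force the ``rank'' $h^1=B+1-g-n$ to be negative as soon as $g+n>B+1$, which is absurd. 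Moreover, no bookkeeping can reconcile a rank that depends on $n$ with the claimed threshold $g+B$, which is independent of $n$. The genus-zero lemma could afford the swap only because its special primary fields $b_i=a-a_i$ with $\sum_i a_i=a$ make the root degree equal to $-1$ even at $s=a$; mimicking that step literally is what breaks your argument.

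The correct count keeps $s=0$ throughout: then $\mathcal{L}^{\otimes a}\cong\mathcal{O}\bigl(-\sum_i b_ip_i\bigr)$ has degree $-aB$, so $\deg\mathcal{L}=-B<0$ and $h^0=0$ on smooth curves, while Riemann--Roch gives $h^0-h^1=\tfrac{0-aB}{a}-g+1=1-g-B$, i.e.\ $R^1\pi_*\mathcal{L}$ is a bundle of rank $g+B-1$. Its total Chern class therefore vanishes in degrees $k>g+B-1$, which is exactly the stated range $k\geq g+B$, with no reconciliation needed and no $n$ appearing. What then remains --- and this you did flag correctly --- is the vanishing of $h^0$ over the boundary, where a root of negative total degree on a nodal curve may still have sections on components; that is the genuinely delicate point which the paper's one-line proof also leaves implicit. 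So as written the proposal fails at the rank computation; with $s=0$ restored it becomes the intended argument, modulo that boundary issue.
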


\subsection{The $n=1$ case} We have the following 

\begin{proposition} \label{prop:reducltion-to-lambda} Let $n=1$. Then for any $m\geq 1$, 
	\begin{align*}
		a^{1-g}\lambda_{g} \Omega^{[a]}_{g,1+m}\big(a,0; -a,\underbrace{0,\ldots,0}_{m}\big)
		= \frac{a^{g}\lambda_{g} \Lambda_g^{[a]} }{(1-a\psi_1)},
	\end{align*}
	where $\Lambda_g^{[a]}$ denotes $\sum_{i=0}^g (-1)^i\lambda_i a^i$.
\end{proposition}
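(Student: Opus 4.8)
The plan is to remove the $m$ frozen legs with Proposition~\ref{prop:pull-back}, reduce to a single identity on $\oM_{g,1}$ (in fact on $\oM_g$), and then recognise the right-hand side as $c_a(\mathbb E^{\vee})$ via Mumford's formula, with the whole difficulty concentrated in the boundary behaviour once we multiply by $\lambda_g$.

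First I would apply Proposition~\ref{prop:pull-back} with $n=1$ and $a_1=a$. Since $a-a_1=0$ the numerator collapses and we get
\[
a^{1-g}\lambda_g\,\Omega^{[a]}_{g,1+m}\big(a,0;-a,\underbrace{0,\dots,0}_{m}\big)=\frac{a^{1-g}\lambda_g\,\pi_m^{*}\Omega^{[a]}_{g,1}(a,0;0)}{1-a\psi_1}.
\]
The factor $1-a\psi_1$ (a unit) and the classes $\lambda_g,\Lambda_g^{[a]}$, which involve only $\lambda$-classes, occur identically on both sides and are pulled back along $\pi_m$; as $\pi_m^{*}$ is injective on the tautological ring, the proposition is equivalent to
\[
\lambda_g\,\Omega^{[a]}_{g,1}(a,0;0)=a^{2g-1}\,\lambda_g\,\Lambda_g^{[a]}\qquad\text{on }\oM_{g,1}.
\]
Using Lemma~\ref{lem:pull-back-range} (the last primary field equals $s=0$) this descends further to the same identity on $\oM_g$ when $g\ge 2$; the case $g=0$ is a direct consequence of Lemma~\ref{lem:g0vanishing}, and $g=1$ is immediate because $\lambda_1^{2}=0$ kills the unknown degree-one part on $\oM_{1,1}$.

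Next I would identify the target. Unwinding the definition of the $\Omega$-class gives $\Omega^{[a]}_{g,1}(a,0;0)=\epsilon_{*}\,c_a(-R^{\bullet}\pi_{*}\mathcal L)$, where $\mathcal L$ is the universal $a$-th root of the trivial bundle ($s=0$ and primary field $0$) and $c_a$ denotes the Chern polynomial in the variable $a$. On the trivial root $\mathcal L=\mathcal O$ one has $-R^{\bullet}\pi_{*}\mathcal O=\mathbb E^{\vee}-\mathcal O$, so $c_a(-R^{\bullet}\pi_{*}\mathcal O)=c_a(\mathbb E^{\vee})=\sum_i(-1)^i\lambda_i a^i=\Lambda_g^{[a]}$, while the covering $\epsilon$ has degree $a^{2g-1}$, as recorded by the prefactor $r^{2g-1-h^1(\Gamma)}$ in~\eqref{eqn:JPPZExp}. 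Over the interior $\mathcal M_{g,1}$ every $a$-torsion root has $c_1(\mathcal L)=0$ rationally, hence $\mathrm{ch}(R^\bullet\pi_*\mathcal L)=\mathrm{ch}(R^\bullet\pi_*\mathcal O)$ and each of the $a^{2g}$ sheets contributes $c_a(\mathbb E^{\vee})=\Lambda_g^{[a]}$; so the desired identity already holds over $\mathcal M_{g,1}$, and the two sides differ only by a class supported on $\partial\oM_{g,1}$. I would also invoke Proposition~\ref{prop:RRs0} (after the trivial shift $0\mapsto a$ of the primary field, which produces $B=1$) to know a priori that $\Omega^{[a]}_{g,1}(a,0;0)$ is concentrated in cohomological degrees $\le g$, so that the comparison reduces to a finite check in degrees $g\le d\le 2g-1$.

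The hard part is the boundary. Multiplying by $\lambda_g$ and using the projection formula together with $\epsilon^{*}\lambda_g=\lambda_g$, all contributions from stable graphs with a non-separating edge drop out, because $\lambda_g$ restricts to zero on $\delta_{\mathrm{irr}}$ (the Hodge bundle of the genus-$(g-1)$ normalisation has rank $g-1$); in~\eqref{eqn:JPPZExp} only trees survive, uniformly weighted by $a^{2g-1}$. Comparing Chiodo's formula~\eqref{eqn:Chiodo:formula} with Mumford's formula \cite{Mum83} for $c_a(\mathbb E^{\vee})$, the $\kappa_m$- and $\psi_1^{m}$-terms agree termwise (both carry $B_{m+1}(0)$, from $s=0$ and leg-weight $0$) and genus-$0$ tree vertices are trivialised by Lemma~\ref{lem:g0vanishing}; the discrepancy is entirely carried by the separating edges decorated with a nonzero multiplicity $w\in\{1,\dots,a-1\}$. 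I emphasise that these nonzero-weight terms do \emph{not} vanish individually against $\lambda_g$ --- already their degree-zero part survives as $a^{2g-1}\lambda_g$ --- so a termwise vanishing argument is impossible and a genuinely global identity is required. My plan for this last step is to use the multiplicativity of the $\Omega$-classes under the compact-type gluing maps to factor each separating-edge contribution into lower-genus pieces of the same shape, and then to resum over the edge multiplicities by means of the Bernoulli multiplication formula $\sum_{w=0}^{a-1}B_{n}(w/a)=a^{1-n}B_n$, so that in the degrees allowed by $\lambda_g$ the weighted compact-type corrections collapse exactly onto the unweighted Mumford boundary terms with the global normalisation $a^{2g-1}$. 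Carrying out this resummation --- controlling how the exponential edge factor in~\eqref{eqn:JPPZExp} interacts with the degree truncation imposed by $\lambda_g$ --- is the main obstacle of the proof.
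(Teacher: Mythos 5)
Your first reduction coincides with the paper's: Proposition~\ref{prop:pull-back} with $n=1$, $a_1=a$ reduces the statement to the identity $\lambda_g\,\Omega^{[a]}_{g,1}(a,0;0)=a^{2g-1}\lambda_g\Lambda_g^{[a]}$ on $\oM_{g,1}$, and up to that point (including the observations that $\lambda_g$ kills all non-tree graphs in \eqref{eqn:JPPZExp} and that the $\kappa$- and $\psi$-terms carry $B_{m+1}(0)$) your argument is sound. The genuine gap is in what you call the hard part. You assert that the discrepancy with Mumford's formula ``is entirely carried by the separating edges decorated with a nonzero multiplicity $w\in\{1,\dots,a-1\}$'', that a termwise argument is impossible, and you then only outline a plan (a resummation over edge multiplicities via Bernoulli multiplication formulas) which you yourself leave as ``the main obstacle of the proof''. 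That step is both unfinished and rests on a false premise: on a stable \emph{tree} the weight function $w\in\mathsf{W}_{\Gamma,a,0}$ is not summed over at all. Since $h^1(\Gamma)=0$, the vertex congruences determine every half-edge weight uniquely from the leg decorations; here $s=0$ and the unique leg is decorated by $0$, so propagating from the leaves of the tree forces $w(h)=0$ for every half-edge. Nonzero-weight separating edges simply do not occur in \eqref{eqn:JPPZExp} once $\lambda_g$ has removed the non-tree graphs, so there is nothing to resum, and your claim that their ``degree-zero part survives as $a^{2g-1}\lambda_g$'' refers to terms that are absent from the sum.

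Consequently, the termwise comparison you declare impossible is exactly the paper's (complete and short) proof: after multiplying by $\lambda_g$, every surviving graph is a tree with all weights equal to zero, every Bernoulli polynomial in Chiodo's formula is evaluated at $0$, the prefactor $a^{2g-1-h^1(\Gamma)}$ is identically $a^{2g-1}$, and \eqref{eqn:JPPZExp} collapses term by term onto $a^{2g-1}$ times Mumford's expansion of $\lambda_g\Lambda_g^{[a]}$ (whose non-separating boundary terms are likewise killed by $\lambda_g$). Your interior computation via torsion line bundles, and the degree bound from Proposition~\ref{prop:RRs0}, are correct but cannot substitute for this: they pin the class down only modulo classes supported on the boundary, which is precisely where your argument stops. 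As written, the proposal is not a proof.
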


\begin{proof} Proposition~\ref{prop:pull-back} implies that it is sufficient to prove that  
$$
	\lambda_g\Omega^{[a]}_{g,1}(a,0; 0)= a^{2g-1} \lambda_g \Lambda_g^{[a]}.
$$ 
To this end, we just match the contributions of the graphs in Equation~\ref{eqn:JPPZExp} for the left hand side and the corresponding expression stemming from Mumford's formula for the right hand side for this equation. 
\begin{enumerate}
	\item The presence of $\lambda_g$ forces the set of stable graphs to restrict to the set of stable trees. Therefore the prefactor $a^{2g-1-h^1(\Gamma)}$ specializes to $a^{2g-1}$ and can be factored outside. 
	\item For stable trees, via the local vertex condition all decorations are uniquely determined by the decorations of the legs. In this case, the only leg is decorated by zero, forcing all other decorations to be equal to zero. Therefore the Bernoulli polynomials present in the edges contributions become Bernoulli numbers independent on $a$.
	\item The Bernoulli polynomials paired with $\psi$ or $\kappa$ classes get evaluated at zero and therefore also specialize to Bernoulli numbers independent on $\mu$.
\end{enumerate}
Thus the formula specializes to the Mumfords formula for $\Lambda_g^{[a]}$ multiplied by $\lambda_g$ (thus we also have to select only stable trees among all graphs) and an extra factor $a^{2g-1}$.
\end{proof}

Thus, in order to prove Theorems~\ref{thm:anygn1n0-mgeq2} and~\ref{thm:anyg-n0n1-m1} for $n=1$, we have to make computations only with the $\lambda$-classes. To this end, we generalize some known relations for $\lambda$-classes in Section~\ref{sec:Arcara:Sato} below.

\begin{remark} In the light of Proposition~\ref{prop:reducltion-to-lambda} one might wonder why we pose our conjectures in Section~\ref{sec:results} in terms of the $\Ch$-classes. A plausible alternative might be, for instance, to replace $\Omega(v)$ with  $a(v)^{g(v)}\lambda_{g(v)} \Lambda_{g(v)}^{[a(v)]} \Psi(v)$; in all proven cases there is no difference between these classes. However, the very first case where these classes can potentially give different result, $g=1$, $m=n=2$, we indeed have the vanishing prescribed by Conjecture~\ref{conj:vanishing-m-less-2} for the $\Ch$-classes, and no vanishing for this alternative, as per experiments with \texttt{admcycles}.
\end{remark}

\subsection{The $n=0$ case} \label{sec:n0-case}

\begin{proposition} \label{prop:reducltion-to-lambdan0} Let $n=0$. Then for any $m\geq 1$ we have
	\begin{align*}
		a^{1-g}\lambda_{g} \Omega^{[a]}_{g,m}\big(a,0; \underbrace{0,\ldots,0}_{m}\big)
		= a^{g}\lambda_{g} \Lambda_g^{[a]}.
	\end{align*}
\end{proposition}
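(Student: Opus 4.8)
The plan is to reduce Proposition~\ref{prop:reducltion-to-lambdan0} to Proposition~\ref{prop:reducltion-to-lambda}, exploiting the fact that the $n=0$ class is obtained from the $n=1$ class by forgetting marked points and that the relevant $\Omega$-class with all primary fields equal to zero has a particularly rigid structure. First I would observe that the difference between the two statements is purely the presence of the extra factor $(1-a\psi_1)^{-1}$ and the shift by $a$ in the power of $a$ out front: in Proposition~\ref{prop:reducltion-to-lambda} the answer is $a^{g}\lambda_g\Lambda_g^{[a]}/(1-a\psi_1)$, whereas here we want simply $a^{g}\lambda_g\Lambda_g^{[a]}$, with no $\psi$-factor since there are no regular legs at all.

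The cleanest route is to run the same argument as in the proof of Proposition~\ref{prop:reducltion-to-lambda}, but directly in the $n=0$ setting. Concretely, I would first apply Proposition~\ref{prop:pull-back} (with $n=0$) to peel off the forgotten marked points, reducing the claim to the single identity
\[
\lambda_g\,\Omega^{[a]}_{g,0}(a,0;)= a^{2g-1}\,\lambda_g\,\Lambda_g^{[a]},
\]
where $\Omega^{[a]}_{g,0}(a,0;)$ carries no primary fields. Then I would invoke the explicit Corollary~\ref{cor:JPPZExp} formula \eqref{eqn:JPPZExp}: the presence of $\lambda_g$ forces the stable graphs contributing to be trees, so $h^1(\Gamma)=0$ and the prefactor $r^{2g-1-h^1(\Gamma)}$ specializes to $a^{2g-1}$, which factors out. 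Since there are no legs, the local vertex congruence condition forces every half-edge weight $w(h)$ to be zero on a tree, so every Bernoulli polynomial $B_{m+1}(w(h)/r)$, $B_{m+1}(s/r)$ with $s=0$, and $B_{m+1}(a_i/r)$ (vacuously, as there are no legs) evaluates at an argument independent of $a$, yielding honest Bernoulli numbers. What remains is exactly Mumford's formula for $\Lambda_g^{[a]}=\sum_{i=0}^g(-1)^i\lambda_i a^i$, multiplied by $\lambda_g$ and by the extracted $a^{2g-1}$.

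The only genuine subtlety—and what I expect to be the main obstacle—is the absence of marked points when $n=0$. The forgetful-map manipulations in Proposition~\ref{prop:pull-back} and the shift Lemma~\ref{prop:shift-ai} are phrased for $n\geq 1$ regular insertions, and one must be careful that the stability condition $2g-2+m>0$ still permits the reduction and that forgetting the last $m$ points lands on a moduli space $\oM_{g,0}$ that is nonempty (which requires $g\geq 2$, matching the hypothesis of Theorem~\ref{thm:anyg-m0}). I would verify that Proposition~\ref{prop:pull-back} applies verbatim in the $n=0$ case, since its proof only uses Lemmas~\ref{prop:shift-ai} and~\ref{lem:pull-back-range} on the frozen (zero-decorated) insertions and never on regular legs; the product $\prod_{i=1}^n(1-a_i\psi_i)$ is simply empty, so no $\psi$-factor survives, which is precisely why the answer here differs from Proposition~\ref{prop:reducltion-to-lambda} by exactly that missing denominator. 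With that checked, the matching against Mumford's formula is identical to the $n=1$ case and the proof concludes.
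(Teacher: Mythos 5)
Your core computation is the right one, and it is exactly what the paper means when it says the proof is ``parallel to'' that of Proposition~\ref{prop:reducltion-to-lambda}: restrict the graph sum \eqref{eqn:JPPZExp} to stable trees (forced by $\lambda_g$, automatic in genus $0$), observe that with $s=0$ and all leg decorations equal to zero every half-edge weight $w(h)$ on a tree vanishes, so all Bernoulli polynomials collapse to Bernoulli numbers, and the whole expression becomes $a^{2g-1}$ times Mumford's formula for $\Lambda_g^{[a]}$. The genuine gap is in the scaffolding you insist on wrapping around this: you first apply Proposition~\ref{prop:pull-back} with $n=0$, pushing the identity down to $\oM_{g,0}$, and you then restrict to $g\geq 2$ so that this space exists, claiming this restriction is harmless because it ``matches the hypothesis of Theorem~\ref{thm:anyg-m0}''. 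It is not harmless. Proposition~\ref{prop:reducltion-to-lambdan0} carries no genus restriction beyond the stability $2g-2+m>0$ of $\oM_{g,m}$; in particular it is stated, and needed, for $g=1$, $m\geq 1$ (and for $g=0$, $m\geq 3$). Indeed, the corollary drawn from it in Section~\ref{sec:n0-case} is what settles the $n=0$ case of Theorem~\ref{thm:anygn1n0-mgeq2}, which is asserted for all $g\geq 0$. For $g\leq 1$ your reduction step is meaningless: $\oM_{1,0}$ and $\oM_{0,0}$ are not moduli spaces of stable curves, and the last application of Lemma~\ref{lem:pull-back-range} (forgetting the final marked point) is not available. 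As written, your argument proves the proposition only for $g\geq 2$.

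The fix is simply to delete the reduction, which is unnecessary. The Mumford-matching argument of your second paragraph applies verbatim to $\lambda_g\Omega^{[a]}_{g,m}(a,0;0,\dots,0)$ directly on $\oM_{g,m}$: the only input it uses is that all leg decorations are zero, and it makes no difference whether there are $m$ such legs or one. This is the paper's intended proof. Alternatively, if you want to recycle the $n=1$ computation rather than redo the matching, stop the forgetful-map reduction at $\oM_{g,1}$ instead of $\oM_{g,0}$: by Lemma~\ref{lem:pull-back-range} with $s=0$ one has $\Omega^{[a]}_{g,m}(a,0;0,\dots,0)=\pi_{m-1}^{*}\,\Omega^{[a]}_{g,1}(a,0;0)$, and the identity $\lambda_g\Omega^{[a]}_{g,1}(a,0;0)=a^{2g-1}\lambda_g\Lambda_g^{[a]}$ established inside the proof of Proposition~\ref{prop:reducltion-to-lambda} then pulls back (using $\pi^{*}\lambda_g=\lambda_g$ and $\pi^{*}\Lambda_g^{[a]}=\Lambda_g^{[a]}$), giving the claim for all $g\geq 1$ with no appearance of $\oM_{g,0}$; the remaining case $g=0$ follows from Lemma~\ref{lem:g0vanishing} together with Lemma~\ref{prop:shift-ai}. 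Your observation that the empty product $\prod_{i=1}^{n}(1-a_i\psi_i)$ explains the absence of the $(1-a\psi_1)^{-1}$ factor relative to Proposition~\ref{prop:reducltion-to-lambda} is correct and worth keeping.
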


\begin{proof} The proof is parallel to the one of Proposition~\ref{prop:reducltion-to-lambda}.
\end{proof}

Employing that $\lambda_g^2 = 0$, we immediately have the following corollary.

\begin{corollary} For $m \geq 1$ and $n=0$,
$
		\deg\left(\lambda_{g} \Omega^{[a]}_{g,m}\big(a,0; 0,\ldots,0\big)\right) \leq 2g - 1.
$
\end{corollary}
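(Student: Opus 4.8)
The plan is to read the bound directly off Proposition~\ref{prop:reducltion-to-lambdan0}, which already provides the class in closed form. First I would clear the normalizing power of $a$: multiplying both sides of the identity
\[
a^{1-g}\lambda_{g} \Omega^{[a]}_{g,m}\big(a,0; 0,\ldots,0\big) = a^{g}\lambda_{g} \Lambda_g^{[a]}
\]
by $a^{g-1}$ yields $\lambda_{g} \Omega^{[a]}_{g,m}(a,0; 0,\ldots,0) = a^{2g-1}\lambda_{g} \Lambda_g^{[a]}$.

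Next I would expand $\Lambda_g^{[a]} = \sum_{i=0}^g (-1)^i \lambda_i a^i$ and distribute the outer factor $\lambda_g$, so that $\lambda_g \Lambda_g^{[a]} = \sum_{i=0}^g (-1)^i a^i \lambda_g \lambda_i$. The key observation is that the top summand, the one with $i=g$, equals $(-1)^g a^g \lambda_g^2$ and therefore vanishes, since $\lambda_g^2 = 0$ in the tautological ring. Only the summands with $0 \leq i \leq g-1$ survive, and each remaining product $\lambda_g \lambda_i$ has Chow degree $g+i \leq 2g-1$.

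Consequently
\[
\lambda_g \Omega^{[a]}_{g,m}(a,0;0,\ldots,0) = \sum_{i=0}^{g-1} (-1)^i a^{2g-1+i}\, \lambda_g \lambda_i
\]
is supported in Chow degrees at most $2g-1$, which is exactly the asserted bound (here $\deg$ is understood as the degree in the tautological ring). All of the substance is contained in Proposition~\ref{prop:reducltion-to-lambdan0}; beyond invoking it there is no genuine obstacle, the only additional ingredient being the classical relation $\lambda_g^2 = 0$, whose sole effect is to kill the would-be degree-$2g$ term $\lambda_g^2 a^g$ and thereby bring the top Chow degree down from $2g$ to $2g-1$.
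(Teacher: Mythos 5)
Your proof is correct and follows exactly the paper's route: the paper likewise deduces the corollary immediately from Proposition~\ref{prop:reducltion-to-lambdan0} together with the relation $\lambda_g^2=0$, which kills the top-degree term $(-1)^g a^g\lambda_g^2$ of $\lambda_g\Lambda_g^{[a]}$. You have merely written out the expansion of $\Lambda_g^{[a]}$ explicitly, which is a fine (and faithful) elaboration of the paper's one-line argument.
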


This concludes the proof and refines the bound of Theorems~\ref{thm:anygn1n0-mgeq2} and~\ref{thm:anyg-n0n1-m1} for the $n=0$ case, as the $\Omega$-classes involved for $n=0$ degenerate from sums of (degree-labeled) stable trees to a single rooted vertex and $A^1_{g,0}=\lambda_g^2=0$.

\section{Arcara-Sato relations and generalizations}
\label{sec:Arcara:Sato}

\subsection{Basis relation in the tautological ring}
Let $\Lambda_g\coloneqq \sum_{i=0}^g (-1)^i \lambda_i$ (we omit index $[x]$ in the notation $\Lambda_g^{[x]}$ when we set $x=1$).

 Let $\gamma_1\diamond\gamma_2$ be the operation of concatenation of two classes or decorated stable dual graphs, $\gamma_1$ in $\oM_{g_1,2}$, $g_1\geq 1$, and $\gamma_2$ in $\oM_{g_2,1+m}$, $g_2\geq 1$ for $m=0,1$ and $g_2\geq 0$ for $m\geq 2$, that forms an edge from the second leaf of $\gamma_1$ and the first leave of $\gamma_2$. 

\begin{lemma}
For any $r\geq 0$ we have following relation in $\overline{\mathcal{M}}_{g,2}$:
\begin{align}
	\label{eq: AS m=2} 0 = \ & - \Big(\frac{\Lambda_g}{1-\psi_1}\Big|_{\oM_{g,2}}\Big)_{g+r}
	+ (-1)^{r} \Big(\frac{\Lambda_g}{1-\psi_2}\Big|_{\oM_{g,2}}\Big)_{g+r}
	\\ \notag
	& + \sum_{\substack{g_1+g_2=g\\ g_1,g_2\geq 1 \\ a_1+a_2 =g-1+r}} (-1)^{g_1+a_1} 
	\Big(\frac{\Lambda_{g_1}}{1-\psi'}\Big|_{\oM_{g_1,2}}\Big)_{a_1}
	\diamond 
	\Big(\frac{\Lambda_{g_2}}{1-\psi''}\Big|_{\oM_{g_2,2}}\Big)_{a_2}. 
\end{align}	
Moreover, for any $m\geq 2$ and $r\geq 0$ we have the following relation in $\overline{\mathcal{M}}_{g,1+m}$:
\begin{align}
	\label{eq: AS any m} 0 = \ & - \Big(\frac{\Lambda_g}{1-\psi_1}\Big|_{\oM_{g,1+m}}\Big)_{g+m-1+r}
	\\ \notag
	& + \sum_{\substack{g_1+g_2=g\\ g_1\geq 1, g_2\geq 0\geq \\ a_1+a_2 =g+m-2+r}} (-1)^{g_1+a_1} 
	\Big(\frac{\Lambda_{g_1}}{1-\psi'}\Big|_{\oM_{g_1,2}}\Big)_{a_1}
	\diamond 
	\Big(\frac{\Lambda_{g_2}}{1-\psi''}\Big|_{\oM_{g_2,1+m}}\Big)_{a_2}. 
\end{align}	
In both formulas $\psi'$ and $\psi''$ denote the $\psi$-classes at the two branches of the node formed by the operation $\diamond$. 
\end{lemma}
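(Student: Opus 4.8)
The plan is to derive both relations from a single source: a vanishing statement for $\lambda_g$-decorated classes pushed through the forgetful and gluing maps, in the spirit of the Arcara--Sato computation. The key input is that $\Lambda_g = \sum_i (-1)^i \lambda_i$ behaves well under the boundary restriction formulas, and in particular that $\lambda_g \cdot \lambda_g = 0$ together with the fact that $\lambda_g$ vanishes off compact type forces a tree-like structure on any nonzero contribution. First I would set up the universal class $\Lambda_g/(1-\psi)$ and compute its restriction to the boundary divisors of $\oM_{g,2}$ (respectively $\oM_{g,1+m}$) using the standard splitting formula for $\lambda$-classes: when the curve degenerates to a node joining components of genus $g_1$ and $g_2 = g-g_1$, the Hodge bundle splits as a direct sum, so $\Lambda_g$ restricts to $\Lambda_{g_1} \boxtimes \Lambda_{g_2}$, and the $\psi$-class at the marked point on the root side contributes geometric-series factors that account for the $1/(1-\psi')$ and $1/(1-\psi'')$ terms.

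The central mechanism I expect to use is a relation coming from the comparison of $\psi_1$ and $\psi_2$ on $\oM_{g,2}$: the difference $\psi_1 - \psi_2$ is supported on the boundary (it equals a sum of boundary divisors separating the two marked points), and multiplying by $\Lambda_g$ and extracting the appropriate graded piece converts this into the alternating sum over $(g_1,g_2)$ splittings. Concretely, I would consider the class
\[
\Big(\frac{\Lambda_g}{1-\psi_1}\Big)_{g+r} - (-1)^r \Big(\frac{\Lambda_g}{1-\psi_2}\Big)_{g+r}
\]
and show that the bulk (interior) contributions cancel, leaving only boundary terms which, after applying the Hodge-bundle splitting and carefully tracking the signs $(-1)^{g_1+a_1}$ arising from the node and from the degree bookkeeping, reproduce the concatenation sum. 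For the second relation (\ref{eq: AS any m}) with $m \geq 2$, the asymmetry between the single distinguished leg and the $m$ frozen legs removes the second interior term, which is why only one boundary-free piece survives; here I would allow $g_2 = 0$ in the splitting since the extra legs can stabilize a genus-zero component.

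The main obstacle will be the precise sign and index bookkeeping: correctly accounting for the sign $(-1)^{g_1+a_1}$ and verifying that the degree constraint $a_1 + a_2 = g-1+r$ (resp. $g+m-2+r$) emerges naturally from matching cohomological degrees on the two sides of the node, including the shift contributed by the gluing/pushforward which raises the codimension by one. I would handle this by working with the explicit generating series in a formal variable tracking degree, so that the Pochhammer-type combinatorics collapse into the clean geometric-series form, and then extract coefficients at the end. A secondary technical point is justifying the restriction to stable \emph{trees} rather than all stable graphs; this follows because every factor carries a $\lambda_{g_i}$ (hidden inside $\Lambda_{g_i}$ via its top term and the $\lambda_g^2 = 0$ relation across nodes), so any graph with a nonseparating loop would require two top Hodge classes on a single component and hence vanishes. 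Once the tree structure and the splitting formula are in place, both identities should reduce to verifying a single-variable polynomial identity in $\psi$-classes, which is routine.
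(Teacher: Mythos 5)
Your central mechanism fails. The claim that $\psi_1-\psi_2$ is a sum of boundary divisors on $\oM_{g,2}$ is false in general: for $g\geq 3$ the classes $\psi_1$, $\psi_2$, $\kappa_1$ are linearly independent in $H^2(\mathcal{M}_{g,2})$, so $\psi_1-\psi_2$ does not vanish on the interior (the identity you have in mind holds only in low genus, e.g.\ $\psi_1=\psi_2$ on $\oM_{1,2}$). More fundamentally, the relations in the lemma are not formal consequences of the Hodge-bundle splitting and boundary restriction formulas: they are genuine tautological relations of Arcara--Sato/Liu--Pandharipande type, and in the degrees $g+r\geq g$ where they live, the vanishing of the ``interior'' part is exactly the deep content that needs an external geometric input. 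Your proposal never supplies such an input, so the step ``show that the bulk (interior) contributions cancel'' is precisely the statement to be proved, not something that follows from bookkeeping of signs, degrees, and geometric series. A secondary error: you justify the restriction to trees by saying every factor carries a $\lambda_{g_i}$ and invoking $\lambda_g^2=0$, but the lemma's classes involve $\Lambda_{g_i}=\sum_j(-1)^j\lambda_j$, not $\lambda_{g_i}\Lambda_{g_i}$; the class $\Lambda_g$ does not vanish off compact type. (The multiplication by $\lambda_g$ happens only later in the paper, when these relations are combined with Propositions on $\Omega$-classes.)

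The paper's proof supplies the missing input via virtual localization, following Arcara--Sato: one puts a $\mathbb{C}^*$-action on the moduli space of degree-one stable maps $\overline{\mathcal{M}}_{g,1+m}(\mathbb{P}^1,1)$, applies the Graber--Pandharipande virtual localization formula with the first marked point constrained to lie over $0$ and the remaining $m$ points over $\infty$, and extracts the coefficient of $t^{-(r+m+2)}$ in the equivariant parameter. Since the relevant pushforward is polynomial in $t$, this coefficient vanishes, and the fixed-locus contributions (two vertices joined by the degree-one edge, with genus splitting $g_1+g_2=g$) produce exactly the terms of Equation~\eqref{eq: AS any m}; Equation~\eqref{eq: AS m=2} then follows by setting $m=2$ and pushing forward along the map forgetting the third point, using the string equation. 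If you want to salvage your approach, you would have to either reproduce this localization computation or replace it by an equivalent known vanishing (e.g.\ of Getzler--Ionel type) that controls $\psi$-monomials of degree $\geq g$ modulo boundary; without one of these, the argument cannot close.
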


\begin{proof} 
Formulas~(\ref{eq: AS m=2}) and (\ref{eq: AS any m}) are obtained by a slight modification of the argument of Arcara and Sato in \cite{AS09}. In this context, we will only point out the necessary modifications in their proof to derive a valid proof of (\ref{eq: AS m=2}) and (\ref{eq: AS any m}), while referring to their work for notations and details.

In this \cite{AS09}, Arcara and Sato first define a $\mathbb{C}^*$-action on the moduli space of degree $1$ maps from genus $g$ curves with $3$ points to $\mathbb{P}^{1}$ denoted by $\overline{\mathcal{M}}_{g,3}\left(\mathbb{P}^{1},1\right)$. Then, they use the virtual localization formula \cite{GP97} on $\overline{\mathcal{M}}_{g,3}\left(\mathbb{P}^{1},1\right)$, imposing in each fixed locus that the point $1$ is sent to $0\in\mathbb{P}^{1}$, while the points $2$ and $3$ are sent to $\infty\in\mathbb{P}^{1}$. Then, they extract the coefficient of $t^{-4}$ in this formula, where $t$ is the generator of the equivariant cohomology of the point. This coefficient vanishes and this vanishing gives their formula after forgetting the marked points $2$ and $3$. 

In our case, we consider the moduli space $\overline{\mathcal{M}}_{g,1+m}\left(\mathbb{P}^{1},1\right)$, for $m\geq2$, that is with $m+1$ marked points instead of $3$, and we impose in each fixed locus the first marked point to be sent to $0$, while the $m$ other marked points are sent to $\infty$. Thus, their localization formula is replaced by
\[
\sum_{\tilde{F}_{i}}\left(\tilde{f}\vert_{\tilde{F}_{i}}\right)_{*}\frac{\left[1\right]^{{\rm vir}}}{e_{\mathbb{C}^{*}}\left(\tilde{F}_{i}^{{\rm vir}}\right)}=\frac{\big(\tilde{i^{'}}\big)^*\tilde{f}_{*}\left[1\right]^{{\rm vir}}}{t\left(-t\right)^{m}}
\]
where $\tilde{f}:\overline{\mathcal{M}}_{g,1+m}\left(\mathbb{P}^{1},1\right)\rightarrow\overline{\mathcal{M}}_{g,1+m}\times\left(\mathbb{P}^{1}\right)^{m+1}$ and $\tilde{i^{'}}:\overline{\mathcal{M}}_{g,m+1}\times\left\{ 0\right\} \times\left\{ \infty\right\} ^{m}\hookrightarrow\overline{\mathcal{M}}_{g,1+m}\times\left(\mathbb{P}^{1}\right)^{m+1}$. To each fixed point of the action is associated a graph, the fixed locus $\tilde{F}_{i}$, for $0\leq i\leq g$, corresponds to the graph with two vertices connected by a degree $1$ edge, such that one vertex is of genus $i$ with the first marked point on it and associated to $0\in \mathbb{P}^{1}$, and the other vertex is of genus $g-i$ with the $m$ remaining marked points and associated to $\infty \in \mathbb{P}^{1}$. In our case we have $e_{\mathbb{C}^{*}}\left(\overline{\mathcal{M}}_{g,m+1}\times\left\{ 0\right\} \times\left\{ \infty\right\} ^{m}\right)=t\left(-t\right)^{m}$ giving the denominator on the RHS. Now, we extract the coefficient of $\frac{1}{t^{r+m+2}}$ in this formula, since $\tilde{i^{'}}\tilde{f}_{*}\left[1\right]^{{\rm vir}}$ is a polynomial in $t$, the RHS vanishes. The contribution of the Euler class $e_{\mathbb{C}^{*}}\big(\tilde{F}_{i}^{{\rm vir}}\big)$ does not depend on the number of marked points $1+m\geq 3$, so  it is the same than in the case $1+m=3$ treated in \cite{AS09}. Thus, the localization formula gives exactly formula~(\ref{eq: AS any m}), where the first term corresponds to the contribution of the fixed locus $\tilde{F}_{0}$, and the second term to the fixed loci $\tilde{F}_{g_1},$ for $0<g_1\leq g$. To get formula~(\ref{eq: AS m=2}), it suffices to substitute $m=2$, and then to push forward this equality to $\overline{\mathcal{M}}_{g,2}$ by the forgetful map forgetting the point $3$, using the string equation when necessary. 
\end{proof}

\begin{proposition}
For any $r\geq0$ we have:
\begin{align}
0=\  & \left(-1\right)^{r-1}\DR_{g}(1,-1)\psi_{1}^{r}|_{\oM_{g,2}}+\Big(\frac{\Lambda_g}{1-\psi_{2}}\Big|_{\oM_{g,2}}\Big)_{g+r}\label{eq: Arcara with DR} \\\notag
 & +\sum_{\substack{g_{1}+g_{2}=g\\
g_{1},g_{2}\geq1\\
a_{1}+a_{2}=g_{1}+r-1
}
}(-1)^{a_{2}-1}\Big(\frac{\Lambda_{g_1}}{1-\psi'}\Big|_{\oM_{g_{1},2}}\Big)_{a_{1}}\diamond (\psi'')^{a_{2}}\DR_{g_{2}}(1,-1)|_{\oM_{g_{2},2}}.
\end{align}
Here $\psi'$ and $\psi''$ again denote the $\psi$-classes at the two branches of the node formed by $\diamond$. 
\end{proposition}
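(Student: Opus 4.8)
The relation~\eqref{eq: Arcara with DR} has exactly the shape of the Arcara--Sato relation~\eqref{eq: AS m=2}, with the two ``extremal'' Hodge factors replaced by double ramification cycles: the genus-$g$ term $\bigl(\frac{\Lambda_g}{1-\psi_1}\bigr)_{g+r}$ becomes $\DR_g(1,-1)\psi_1^r$, and in the boundary sum each factor $\bigl(\frac{\Lambda_{g_2}}{1-\psi''}\bigr)_{a_2}$ carried by the vertex away from the point $1$ becomes $(\psi'')^{a_2}\DR_{g_2}(1,-1)$, the exponent being reindexed by $g_2$ (which is precisely what turns the constraint $a_1+a_2=g-1+r$ into $a_1+a_2=g_1-1+r$). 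The factors $\bigl(\frac{\Lambda_g}{1-\psi_2}\bigr)_{g+r}$ and $\bigl(\frac{\Lambda_{g_1}}{1-\psi'}\bigr)_{a_1}$, attached to the vertex containing the point $1$, are left untouched. So the plan is to produce~\eqref{eq: Arcara with DR} by the same virtual localization that yields~\eqref{eq: AS m=2}, but reading the contribution attached to the relative divisor over $\infty$ (and the source component over $0$) as a rubber integral, i.e.\ as a double ramification cycle rather than as a product of Hodge classes.

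Concretely, I would repeat the computation in the proof of the preceding lemma on $\overline{\mathcal{M}}_{g,3}(\mathbb{P}^1,1)$, sending the point $1$ to $0$ and the points $2,3$ to $\infty$, and extract the same negative power of the equivariant parameter $t$. Since the map has degree $1$, the ramification data $(1,-1)$ is automatically present, so the novelty is purely in the bookkeeping: instead of expanding every fixed-locus contribution as a product $\frac{\Lambda_{g(v)}}{1-\psi}$, I would recognize the total contribution of the chain ending at the relative divisor as the pushforward of a rubber virtual class, which by the definition of the double ramification cycle (see~\cite{JPPZ}) is $\DR_{g(v)}(1,-1)$, capped with powers of the cotangent line at the node; these produce the insertions $\psi_1^r$ and $(\psi'')^{a_2}$. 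Forgetting the point $3$ by the string equation, exactly as in the passage from~\eqref{eq: AS any m} to~\eqref{eq: AS m=2}, then collapses the result to $\overline{\mathcal{M}}_{g,2}$ and gives~\eqref{eq: Arcara with DR}.

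The sign bookkeeping should follow verbatim from~\cite{AS09,GP97}: the overall factor $(-1)^{r-1}$ on the $\DR_g$ term and the factors $(-1)^{a_2-1}$ in the sum come from the Euler classes $e_{\mathbb{C}^*}(\tilde F_i^{\mathrm{vir}})$ and from the weights $t$ and $-t$ at the two torus-fixed points of $\mathbb{P}^1$, precisely as the signs $(-1)^r$ and $(-1)^{g_1+a_1}$ arose in~\eqref{eq: AS m=2}; one only has to track the shift by $g_2$ in the $\psi''$-exponent introduced when a Hodge factor is traded for a double ramification cycle. A clean way to organize this is to phrase the trade as a single ``dictionary'' identity on $\overline{\mathcal{M}}_{h,2}$ converting $\bigl(\frac{\Lambda_h}{1-\psi}\bigr)_{h+s}$ into $\DR_h(1,-1)\psi^s$ plus lower boundary contributions, applied once at the genus-$g$ vertex (with $s=r$) and once at each genus-$g_2$ vertex (with $s=a_2-g_2$), and to check that the lower contributions assemble into the difference between the two summations.

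The main obstacle will be exactly this identification of the relevant fixed-locus contribution with the double ramification cycle, i.e.\ the rubber-calculus step producing $\DR_\bullet(1,-1)$ with the correct $\psi$-insertions and \emph{without} an accompanying Hodge factor. In particular I expect the delicate point to be verifying that the reorganization reproduces the \emph{full} double ramification cycle and not merely its compact-type part, since the classes $\Lambda_g$ a priori only control the compact-type contributions; resolving this requires keeping the non-separating boundary strata throughout the localization and confirming that they assemble into the $\delta_{\mathrm{irr}}$-part of $\DR_g(1,-1)$. Since the two markings $1$ and $2$ play genuinely different roles here (one over $0$, one over $\infty$), there is no contradiction with the symmetry of $\DR_g(1,-1)$: the asymmetric right-hand side of~\eqref{eq: Arcara with DR} is an identity valid modulo the full Arcara--Sato relation, not a freestanding expression for the cycle.
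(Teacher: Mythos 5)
Your instinct that \eqref{eq: Arcara with DR} should come from a localization computation in which the contribution over $\infty$ is read as a rubber class is the right one, but your proposal runs the localization on the wrong moduli space, and this is a genuine gap rather than a bookkeeping issue. On the absolute space $\overline{\mathcal{M}}_{g,3}(\mathbb{P}^1,1)$ used for \eqref{eq: AS m=2} and \eqref{eq: AS any m} there is no relative divisor and no rubber: by Graber--Pandharipande localization \cite{GP97}, each fixed locus is a product of moduli of curves contracted over $0$ and $\infty$ joined by the degree-one component, and its contribution to the formula is forced to be a product of Hodge-type factors $\Lambda_{g(v)}/(\pm t-\psi)$ coming from the equivariant Euler classes of the (dual) Hodge bundles twisted by the tangent weights. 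There is nothing in that formula that can be ``recognized'' as $\DR_g(1,-1)$: the full double ramification cycle, unlike its compact-type part, is not determined by these Hodge factors, so your proposed ``dictionary'' trading $\bigl(\Lambda_h/(1-\psi)\bigr)_{h+s}$ for $\DR_h(1,-1)\psi^s$ plus boundary corrections is not an organizational device but substantive geometric input --- indeed, subtracting \eqref{eq: AS m=2} from \eqref{eq: Arcara with DR} produces exactly such a dictionary, so assuming it is circular. The step you yourself flag as ``the main obstacle'' is precisely where the argument fails, and it cannot be repaired within the absolute theory.

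The paper's proof resolves this by changing the space: it localizes on the moduli space of degree-one stable maps \emph{relative} to $(\mathbb{P}^1,\infty)$, using the Graber--Vakil relative virtual localization formula \cite{graber2005relative}, and follows the proof of \cite[Theorem 5.3]{BS22} almost verbatim. In the relative theory, the fixed loci involving the relative divisor carry rubber moduli spaces, and the pushforward of the rubber virtual class to the moduli of curves is, by the definition of the double ramification cycle in \cite{JPPZ}, the class $\DR(1,-1)$ capped with powers of the $\psi$-class at the node; this is how DR cycles enter with no conversion step, while the vertex over $0$ still contributes the factors $\Lambda/(u-\psi)$. The only modification relative to \cite{BS22} is that the equivariant insertion $I_g=\mathrm{ev}_2^*([0])\,e_{\mathbb{C}^*}(B)$ (with the auxiliary bundle $B$ of \cite{Liupan}) is replaced by $\tilde I_g=\mathrm{ev}_2^*([0])$; capping the localization formula with $\tilde I_g$, pushing forward to $\oM_{g,2}$, and extracting the coefficient of $u^{-1-r}$ yields exactly the three terms of \eqref{eq: Arcara with DR}. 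Note also that in this setup no string-equation reduction is needed: the relative space already carries precisely the two markings (the relative point over $\infty$ and the constrained point over $0$), so the identity lands directly on $\oM_{g,2}$.
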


\begin{proof}
The proof of this formula is obtained by localization on the moduli space of degree $1$ stable relative maps to $\left(\mathbb{P}^{1},\infty\right)$. The localization formula for moduli of relative stable maps was obtained by Graber and Vakil in \cite{graber2005relative}.

Recently, the localization on the moduli space of degree $1$ stable relative maps to $\left(\mathbb{P}^{1},\infty\right)$ was used to obtain a formula analogous to Eq.~(\ref{eq: Arcara with DR}), see \cite[Theorem 5.3]{BS22}. The proof of Eq.~(\ref{eq: Arcara with DR}) is almost identical to the proof of \cite[Theorem 5.3]{BS22}. In this context, we will only point out the necessary modifications in the latter proof to derive a valid proof for Eq.~(\ref{eq: Arcara with DR}), while referring to \emph{op.~cit.} for notation and details. It worth noting that  \cite[Appendix]{BS22} provides a detailed description of the virtual localization techniques applied to the moduli space of stable relative maps to $\left(\mathbb{P}^{1},\infty\right)$. 

The proof of \cite[Theorem 5.3]{BS22} goes as follows: first define a $\mathbb{C}^{*}$-action on the moduli space of degree $1$ stable relative maps to $\left(\mathbb{P}^{1},\infty\right)$ denoted by $\overline{\mathcal{M}}_{g,1}\left(\mathbb{P}^{1},1\right)$. Then, use the localization formula to express the virtual fundamental class of $\overline{\mathcal{M}}_{g,1}\left(\mathbb{P}^{1},1\right)$ as sum of contributions associated to each fixed locus of the $\mathbb{C}^{*}$-action in the equivariant cohomology of $\overline{\mathcal{M}}_{g,1}\left(\mathbb{P}^{1},1\right)$. Then, intersect this formula with a $\mathbb{C}^{*}$-equivariant class
\[
I_{g}={\rm ev}_{2}^{*}\left(\left[0\right]\right)e_{\mathbb{C}^{*}}\left(B\right),
\]
(here $B$ is an auxiliary rank $g$ equivariant vector bundle designed to kill the numerator of the localization formula; this idea goes back to~\cite{Liupan})
and push this equality to the moduli space of curves $\overline{\mathcal{M}}_{g,2}$. Finally, extract the coefficient of $u^{-1-r}$ for $r\geq0$, where $u$ is the generator of the equivariant cohomology of the point. This kills the contribution coming from virtual fundamental class of $\overline{\mathcal{M}}_{g,1}\left(\mathbb{P}^{1},1\right)$  since this class is equivariant, and this vanishing proves the formula in~\emph{op.~cit}. 

In order to prove our formula, it suffices to replace the class $I_{g}$ by
\[
\tilde{I}_{g}:={\rm ev}_{2}^{*}\left(\left[0\right]\right).
\]
The argument works in the same way: intersecting the localization formula with $\tilde{I}_{g}$ selects the same fixed loci of the action. However, the intersection of each fixed contribution of the localization formula with $\tilde{I}_{g}$ is of course different. We now explicitly give the contribution of each fixed locus selected by $\tilde{I}_{g}$. Using the notations of \cite[Theorem 5.3]{BS22} we have:
\begin{itemize}
	\item $\iota_{0}^{*}\left(\tilde{I}_{g}\right)=u$, then 
	\[
	\iota_{0}^{*}\left(\frac{\tilde{I}_{g}}{e_{\mathbb{C}^{*}}\left(N_{0}^{{\rm vir}}\right)}\right)=\frac{\sum_{i=0}^{g}\left(-1\right)^{i}\lambda_{i}u^{g-i}}{u-\psi_{1}}
	\]
	and thus 
	\[
	\epsilon_{*}\iota_{0*}\left(\iota_{0}^{*}\left(\frac{\tilde{I}_{g}}{e_{\mathbb{C}^{*}}\left(N_{0}^{{\rm vir}}\right)}\right)\cap\left[\overline{\mathcal{M}}_{g,2}\right]\right)=\sum_{a\geq0}u^{g-a-1}\left(\frac{\Lambda_g}{1-\psi_{1}}\right)_{a};
	\]
	\item $\iota_{\Gamma_{0,g}}^{*}\left(\tilde{I}_{g}\right)=u$, then
	\[
	\iota_{\Gamma_{0,g}}^{*}\left(\frac{\tilde{I}_{g}}{e_{\mathbb{C}^{*}}\left(N_{\Gamma_{0,g}}^{{\rm vir}}\right)}\right)=\frac{1}{-u-\tilde{\psi}_{0}}
	\]
	and thus 
	\[
	\epsilon_{*}\iota_{\Gamma_{0,g}*}\left(\iota_{\Gamma_{0,g}}^{*}\left(\frac{\tilde{I}_{g}}{e_{\mathbb{C}^{*}}\left(N_{\Gamma_{0,g}}^{{\rm vir}}\right)}\right)\cap\left[\overline{\mathcal{M}}_{\Gamma_{0,g}}\right]^{{\rm vir}}\right)=\sum_{i\geq0}u^{-i-1}\left(-1\right)^{i-1}\psi_{2}^{i}{\rm DR}_{g}\left(-1,1\right);
	\]
	\item 
	$\iota_{\Gamma_{g_{1},g_{2}}}^{*}\left(\tilde{I}_{g}\right)=u$, then
	\[
	\iota_{\Gamma_{g_{1},g_{2}}}^{*}\left(\frac{\tilde{I}_{g}}{e_{\mathbb{C}^{*}}\left(N_{\Gamma_{g_{1},g_{2}}}^{{\rm vir}}\right)}\right)=\frac{\sum_{i=1}^{g_{1}}\left(-1\right)^{i}\lambda_{i}u^{g_{1}-i}}{\left(u-\psi_{1}\right)}\times\frac{1}{\left(-u-\tilde{\psi}_{0}\right)},
	\]
	and thus 
	\begin{align*}
		& \epsilon_{*}\iota_{\Gamma_{g_{1},g_{2}}}\left(\iota_{\Gamma_{g_{1},g_{2}}}^{*}\left(\frac{\tilde{I}_{g}}{e_{\mathbb{C}^{*}}\left(N_{\Gamma_{g_{1},g_{2}}}^{{\rm vir}}\right)}\right)\cap\left[\overline{\mathcal{M}}_{\Gamma_{g_{1},g_{2}}}\right]^{{\rm vir}}\right)\\ 
		& =\sum_{a_{1},a_{2}\geq0}u^{g_{1}-a_{1}-a_{2}-2}\left(-1\right)^{a_{2}-1}\times\left(\frac{\Lambda_{g_1}}{1-\psi'}\right)_{a_{1}}\times(\psi'')^{a_{2}}{\rm DR}_{g_{2}}\left(-1,1\right),
	\end{align*}
\end{itemize} 
cf. Eqs~(5.18),~(5.19), and~(5.20) in~\cite{BS22}, respectively. We extract the coefficient of $u^{-1-r}$ in each of these three terms. Their sum vanishes as we outlined above, and this vanishing precisely proves Eq.~(\ref{eq: Arcara with DR}).
\end{proof}

\subsection{Proofs of Theorems~\ref{thm:anygn1n0-mgeq2} and~\ref{thm:anyg-n0n1-m1}}

We only have to treat the $n=1$ case of both theorems as the $n=0$ case is proved in Section~\ref{sec:n0-case}. To this end, multiply Equations~(\ref{eq: AS m=2}),~(\ref{eq: AS any m}), and~(\ref{eq: Arcara with DR}) by $\lambda_g$ and use Propositions~\ref{prop:reducltion-to-lambda} and~\ref{prop:reducltion-to-lambdan0} to replace in all three equations the classes of the type
\[
\frac{\lambda_{\tilde g}\Lambda_{\tilde g}}{1-\psi_1} \Big|_{\oM_{\tilde{g},1+\tilde m}} 
\]
in all instances where they occur by $\Omega_{\tilde g,1+\tilde m}(1,0;-1,0,\dots,0)$ (we mean that $(\tilde g,\tilde m)$ can be $(g,2)$, or $(g_1,2)$, or $(g_2,m)$, and sometimes we have to relabel the marked points to call the one with the $\psi$-class the first one). We have:
\begin{align}
	\label{eq:AS-Omega} 0 = \ & - \big(\lambda_g\Omega_{g,2}(1,0;-1,0)\big)_{2g+r}
	+ (-1)^{r} \big(\lambda_g\Omega_{g,2}(1,0;0,-1)\big)_{2g+r}
	\\ \notag
	& + \sum_{\substack{g_1+g_2=g\\ g_1,g_2\geq 1 \\ a_1+a_2 =2g-1+r}} (-1)^{g_1+a_1} 
	\big(\lambda_{g_1}\Omega_{g_1,2}(1,0;0,-1)\big)_{a_1}
	\diamond 
	\big(\lambda_{g_2}\Omega_{g_2,2}(1,0;-1,0)\big)_{a_2}. 
\\ \notag 
	 0 = \ & - \big(\lambda_g\Omega_{g,2}(1,0;-1,\underbrace{0,\dots,0}_m)\big)_{2g+m-1+r}
	\\ \notag
	& + \sum_{\substack{g_1+g_2=g\\ g_1\geq 1, g_2\geq 0\geq \\ a_1+a_2 =2g+m-2+r}} (-1)^{g_1+a_1} 
	\big(\lambda_{g_1}\Omega_{g_1,2}(1,0;0,-1)\big)_{a_1}
	\diamond 
	\big(\lambda_{g_2}\Omega_{g_2,2}(1,0;-1,\underbrace{0,\dots,0}_m)\big)_{a_2}. 
\\ \notag
	0=\  & \left(-1\right)^{r-1}\lambda_g \DR_{g}(1,-1)\psi_{1}^{r}|_{\oM_{g,2}}+\big(\lambda_g\Omega_{g,2}(1,0;0,-1)\big)_{2g+r} \\\notag
	& +\sum_{\substack{g_{1}+g_{2}=g\\
			g_{1},g_{2}\geq1\\
			a_{1}+a_{2}=2g_{1}+r-1
		}
	}(-1)^{a_{2}-1}\big(\lambda_{g_1}\Omega_{g_1,2}(1,0;0,-1)\big)_{a_1}\diamond (\psi'')^{a_{2}}\DR_{g_{2}}(1,-1)|_{\oM_{g_{2},2}}.
\end{align}
This way Equations~(\ref{eq: AS m=2}),~(\ref{eq: AS any m}), and~(\ref{eq: Arcara with DR}) multiplied by $\lambda_g$ are absolutely analogous to the Liu--Pandharipande equations in~\cite{Liupan} (see also~\cite[Eqs.~(5.1) and (5.2)]{BS22}) and~\cite[Theorem 5.3, Equation (5.16)]{BS22}, where, however, $\Psi_{\tilde g,1+\tilde m} (-1,0,\dots,0) = 1/(1-\psi_1)|_{\oM_{\tilde g, 1+\tilde m}}$ are used instead of the $\Omega$-classes multiplied by $\lambda_{\tilde g}$:
\begin{align}
	\label{eq:AS-Psi} 0 = \ & - \big(\Psi_{g,2}(-1,0)\big)_{2g+r}
	+ (-1)^{r} \big(\Psi_{g,2}(0,-1)\big)_{2g+r}
	\\ \notag
	& + \sum_{\substack{g_1+g_2=g\\ g_1,g_2\geq 1 \\ a_1+a_2 =2g-1+r}} (-1)^{g_1+a_1} 
	\big(\Psi_{g_1,2}(0,-1)\big)_{a_1}
	\diamond 
	\big(\Psi_{g_2,2}(-1,0)\big)_{a_2}. 
	\\ \notag 
	0 = \ & - \big(\Psi_{g,2}(-1,\underbrace{0,\dots,0}_m)\big)_{2g+m-1+r}
	\\ \notag
	& + \sum_{\substack{g_1+g_2=g\\ g_1\geq 1, g_2\geq 0\geq \\ a_1+a_2 =2g+m-2+r}} (-1)^{g_1+a_1} 
	\big(\Psi_{g_1,2}(0,-1)\big)_{a_1}
	\diamond 
	\big(\Psi_{g_2,2}(-1,\underbrace{0,\dots,0}_m)\big)_{a_2}. 
	\\ \notag
	0=\  & \left(-1\right)^{r-1}\lambda_g \DR_{g}(1,-1)\psi_{1}^{r}|_{\oM_{g,2}}+\big(\Psi_{g,2}(0,-1)\big)_{2g+r} \\\notag
	& +\sum_{\substack{g_{1}+g_{2}=g\\
			g_{1},g_{2}\geq1\\
			a_{1}+a_{2}=2g_{1}+r-1
		}
	}(-1)^{a_{2}-1}\big(\Psi_{g_1,2}(0,-1)\big)_{a_1}\diamond (\psi'')^{a_{2}}\DR_{g_{2}}(1,-1)|_{\oM_{g_{2},2}}.
\end{align}

 \noindent
Now we can use the same idea as we used in the proof of Theorem~\ref{thm:equivalence-conjectures-m-geq-2}:
starting from Eqs.~\eqref{eq:AS-Psi}, one obtains in a purely combinatorial way that uses only dimensions of the respective classes the ${}^{lvl}\Psi_{g,n}^m$-analogs of $n=1$ statements of Theorems~\ref{thm:anygn1n0-mgeq2} and~\ref{thm:anyg-n0n1-m1}, that is, \cite[Theorem 2.2]{BS22}, see Section 5 of \emph{op.~cit.} 
Hence, once we use exactly the same argument with $\Psi_{\tilde g,1+\tilde m}(-1,0,\dots,0)$ 
replaced by $\lambda_{\tilde g}\Omega_{\tilde g,1+\tilde m}(1,0;-1,0,\dots,0)$, we immediately obtain the proofs of the $n=1$ cases of Theorems~\ref{thm:anygn1n0-mgeq2} (through Theorem~\ref{thm:equivalence-conjectures-m-geq-2}) and~\ref{thm:anyg-n0n1-m1}.



\begin{thebibliography}{00}

\bibitem{AS09} D.~Arcara, F.~Sato. Recursive formula for
$\psi^g-\lambda_1\psi^{g-1}+\dots+(-1)^g\lambda_g$ in
$\overline{\mathcal{M}}_{g,1}$. Proc. Amer. Math. Soc. 137 (2009), no. 12, 4077--4081.

\bibitem{ABLR21} A. Arsie, A. Buryak, P. Lorenzoni, P. Rossi. Flat F-manifolds, F-CohFTs, and integrable hierarchies. Communications in Mathematical Physics 388 (2021), 291--328.

\bibitem{Blot} X. Blot. The quantum Witten-Kontsevich series and one-part double Hurwitz numbers. Geom. Topol. 26 (2022), no. 4, 1669--1743.

\bibitem{BlotLew} X.~Blot, D.~Lewa\'nski. A paper in preparation. 

\bibitem{Double} G.~Borot, N.~Do, M.~Karev, D.~Lewa\'nski, E.~Moskovsky. Double Hurwitz numbers: polynomiality, topological recursion and intersection theory. Math. Ann., 387 (2022), 179--243.

\bibitem{BKL} G.~Borot, M.~Karev, D.~Lewa\'nski. On ELSV-type formulae and relations between $\Omega$-integrals via deformations of spectral curves. arXiv:2312.16049.

\bibitem{Bur} A.~Buryak. Double ramification cycles and integrable hierarchies. Comm. Math. Phys. 336 (2015), no. 3, 1085–1107.

\bibitem{BDGR1} A.~Buryak, B.~Dubrovin, J.~Gu\'er\'e, P.~Rossi. Tau-structure for the double ramification hierarchies. Comm. Math. Phys. 363 (2018), no. 1, 191–260.

\bibitem{BGR19} A.~Buryak, J.~Gu\'er\'e, P.~Rossi. DR/DZ equivalence conjecture and tautological relations. Geom. Topol. 23 (2019), no. 7, 3537--3600.

\bibitem{BHS22} A.~Buryak, F.~Hern\'andez Iglesias, S.~Shadrin. A conjectural formula for $\DR_{g}(a,-a)\lambda_g$. \'Epijournal G\'eom. Alg\'ebrique 6 (2022), Art. 8, 17 pp.

\bibitem{BR21} A. Buryak, P. Rossi. Extended r-spin theory in all genera and the discrete KdV hierarchy. Advances in Mathematics 386 (2021), 107794.

\bibitem{BS22} A.~Buryak, S.~Shadrin. Tautological relations and integrable systems.  arXiv:2210.07552

\bibitem{MV} D.~Chen, M.~M\"{o}ller, A.~Sauvaget. Masur-Veech volumes and intersection theory: the principal strata of quadratic differentials. Duke Math J., 172(9):1735--1779, 2023. With an appendix by G.~Borot, A.~Giacchetto, D.~Lewa\'nski. 

\bibitem{Chi08} A.~Chiodo. Towards an enumerative geometry of the moduli space of twisted curves and rth roots. Compos. Math. 144 (2008), no. 6, 1461--1496.

\bibitem{CJ} E.~Clader, F.~Janda. 
Pixton's double ramification cycle relations. 
Geom. Topol. 22 (2018), no. 2, 1069--1108.

\bibitem{admcycles} V.~Delecroix, J.~Schmitt, J.~van Zalm. Admcycles --- a Sage package for calculations in the
tautological ring of the moduli space of stable curves. J.~Softw. Alg. Geom. 11 (2021), 89--112.

\bibitem{rELSV} P.~Dunin-Barkowski, R.~Kramer, A.~Popolitov, S.~Shadrin. Loop equations and a proof of Zvonkine's qr-ELSV formula. Ann. Sci. \'Ec. Norm. Sup\'er. (4) 56 (2023), no. 4, 1199-1229.

\bibitem{Fan21} H.~Fan, L.~Wu, F.~You. Higher genus relative Gromov–Witten theory and DR-cycles. J. London Math. Soc. 103 (2021), no.~4. 1547--1576.

\bibitem{spin} A.~Giacchetto, R.~Kramer, D.~Lewa\'nski, A.~Sauvaget. The spin Gromov-Witten/Hurwitz correspondence for $P^1$. 2021. arXiv:2208.03259.

\bibitem{Euler} A.~Giacchetto, D.Lewa\'nski, P.~Norbury. An intersection--theoretic proof of the Harer--Zagier formula. Alg.~Geom., 10(2):130–147, 2023.

\bibitem{GP97} T.~Graber, R.~Pandharipande. Localization of virtual classes. Invent. Math. 135 (1999), no. 2, 487--518.

\bibitem{graber2005relative} T.~Graber, R,~Vakil. Relative virtual localization and vanishing of tautological classes on moduli spaces of curves. Duke Math. J. 130 (2005), no.~1, 1--37.

\bibitem{gln} A.~Giacchetto, D.~Lewa\'nski, P.~Norbury.
An intersection-theoretic proof of the Harer-Zagier formula.
Algebr. Geom. 10 (2023), no. 2, 130--147.

\bibitem{JPPZ} F.~Janda, R.~Pandharipande, A.~Pixton, D.~Zvonkine. Double ramification cycles on the moduli spaces of curves, Publ. Math. Inst. Hautes Etudes Sci. 125 (2017), no.~1, 221--266.

\bibitem{JKV1} T.~J.~Jarvis, T.~Kimura, A.~Vaintrob. 
Moduli Spaces of Higher Spin Curves and Integrable Hierarchies. 
Comp. Math. 126 (2001), no.~2, 157--212. 

\bibitem{lpsz} D.~Lewa\'nski, A.~Popolitov, S.~Shadrin, D.~Zvonkine.
Chiodo formulas for the $r$-th roots and topological recursion.
Lett. Math. Phys. 107 (2017), no. 5, 901--919.

\bibitem{Liupan} X.~Liu, R.~Pandharipande.
New topological recursion relations. 
J. Algebraic Geom. 20 (2011), no.~3, 479--494.

\bibitem{Mum83} D.~Mumford. Towards an enumerative geometry of the moduli space of curves. Arithmetic and geometry, Vol. II, 271--328, Progr. Math., 36, Birkhäuser Boston, Boston, MA, 1983.

\end{thebibliography}
\end{document}